\newcommand{\thickhline}{%
    \noalign {\ifnum 0=`}\fi \hrule height 1pt
    \futurelet \reserved@a \@xhline
}
\newcolumntype{"}{@{\hskip\tabcolsep\vrule width 1pt\hskip\tabcolsep}}
\newcommand{\ccinterval}[2]{[{#1}..{#2}]}
\newcommand{\cointerval}[2]{[{#1}..{#2}-1]}     
\newcommand{\set}[2]{\left\{#1\; \middle|\; #2\right\}}
\newcommand{\oneset}[1]{\left\{\mathinner{#1}\right\}}
\newcommand{\abs}[1]{\left|\mathinner{#1}\right|}
\newcommand{\floor}[1]{\left\lfloor\mathinner{#1} \right\rfloor}
\newcommand{\N}{\ensuremath{\mathbb{N}}}
\newcommand{\Z}{\ensuremath{\mathbb{Z}}}
 \newcommand{\DSPACE}{\ensuremath{\mathsf{DSPACE}}\xspace} 
 \newcommand{\PSPACE}{\ensuremath{\mathsf{PSPACE}}\xspace} 
 \newcommand{\EXPSPACE}{\ensuremath{\mathsf{EXPSPACE}}\xspace} 
 \newcommand{\NP}{\ensuremath{\mathsf{NP}}\xspace} %
 \newcommand{\LOGCFL}{\ensuremath{\mathsf{LOGCFL}}\xspace} %
 \newcommand{\LOGDCFL}{\ensuremath{\mathsf{LOGDCFL}}\xspace} %
 \newcommand{\DLOGTIME}{\ensuremath{\mathsf{DLOGTIME}}\xspace} %
 \newcommand{\DTIME}{\ensuremath{\mathsf{DTIME}}\xspace} %
  \newcommand{\ATIME}{\ensuremath{\mathsf{ATIME}}\xspace} %
 \newcommand{\DLINTIME}{\ensuremath{\mathsf{DLINTIME}}\xspace} %
  \newcommand{\ALOGTIME}{\ensuremath{\mathsf{ALOGTIME}}\xspace} %
 \renewcommand{\L}{\ensuremath{\mathsf{LOGSPACE}}\xspace} %
 \newcommand{\shortL}{\ensuremath{\mathsf{L}}\xspace} %
 \newcommand{\TC}{\ensuremath{\mathsf{TC}^0}\xspace}
 \newcommand{\ACC}{\ensuremath{\mathsf{ACC}^0}\xspace} %
 \newcommand{\Tc}[1]{\ensuremath{\mathsf{TC}^{#1}}\xspace}
 \newcommand{\Ac}[1]{\ensuremath{\mathsf{AC}^{#1}}\xspace}
 \newcommand{\Nc}[1]{\ensuremath{\mathsf{NC}^{#1}}\xspace}
 \newcommand{\NC}{\ensuremath{\mathsf{NC}}\xspace}
 \renewcommand{\P}{\ensuremath{\mathsf{P}}\xspace}
 \newcommand{\Ppoly}{\ensuremath{\mathsf{P\!/poly}}\xspace}
\newcommand{\Pad}{\mathsf{Pad}} 
\newcommand{\APTIME}{\mathsf{APTIME}} 
\newcommand{\coNP}{\mathsf{coNP}} 
\newcommand{\NSPACE}{\mathsf{NSPACE}} 
\newcommand{\polyL}{\mathsf{polyL}} 
\newcommand{\Ptime}{\mathsf{P}} 
\newcommand{\supp}{\operatorname{supp}} 
\newcommand{\val}{\mathrm{val}}
 \newcommand{\LogCFL}{\ensuremath{\mathsf{LogCFL}}}
\newcommand{\nand}{\ensuremath{\mathsf{nand}}}
\newcommand{\Mod}[1]{\ensuremath{\mathsf{Mod}_{#1}}}
\newcommand{\coMod}[1]{\ensuremath{\mathsf{coMod}_{#1}}}
\newcommand{\leaf}{\mathrm{leaf}}
\newcommand{\LEAF}{\mathsf{LEAF}}
\newcommand{\bLEAF}{\mathsf{bLEAF}}
\renewcommand{\phi}{\varphi}
\newcommand{\eps}{\varepsilon}
\newcommand{\Oh}{\mathcal{O}}
\newcommand{\cP}{\mathcal{P}}
\newcommand{\CompWP}{\textsc{CompressedWP}\xspace}
\newcommand{\WP}{\textsc{WP}\xspace}
\newcommand{\Sym}[1]{\mathrm{Sym}({#1})}
\newcommand{\Aut}{\mathrm{Aut}}
\newcommand{\sse}{\subseteq}
\newcommand\ie{i.\,e., }
\newcommand\Wlog{W.\,l.\,o.\,g.\ }
\newcommand\eg{e.\,g.\xspace}
\newbox\pairbox
\def\pair<#1>{{\mathsurround=0pt
		\setbox\pairbox\hbox{$\left\langle#1\right\rangle$}
		\left\langle\kern-0.35\ht\pairbox
		\copy\pairbox\kern-0.35\ht\pairbox\right\rangle}}
\title{Groups with ALOGTIME-hard word problems and PSPACE-complete
compressed word problems}
\author{Laurent Bartholdi}{ENS Lyon, Unité de Mathématiques Pures et Appliquées, France \and Universität Göttingen, Mathematisches Institut, Germany}{laurent.bartholdi@gmail.com}{https://orcid.org/0000-0002-1243-6384}{}
\author{Michael Figelius}{Universit{\"a}t Siegen, Germany }{figelius@eti.uni-siegen.de}{https://orcid.org/0000-0002-0407-0597}{Funded by DFG project LO 748/12-1.}
\author{Markus Lohrey}{Universit{\"a}t Siegen, Germany }{lohrey@eti.uni-siegen.de}{http://orcid.org/0000-0002-4680-7198}{Funded by DFG project LO 748/12-1.}
\author{Armin Wei\ss}{%
	Universität Stuttgart,
	Institut für Formale Methoden der Informatik (FMI), Germany}{armin.weiss@fmi.uni-stuttgart.de}{https://orcid.org/0000-0002-7645-5867}{Funded by DFG project DI 435/7-1.}
\titlerunning{ALOGTIME-hard word problems and PSPACE-complete
compressed word problems}
\authorrunning{L. Bartholdi, M. Figelius, M. Lohrey and A. Wei\ss}
\keywords{\Nc1-hardness, word problem, $G$-programs, straight-line programs, non-solvable groups, self-similar groups, Thompson's groups, Grigorchuk's group}
\begin{document}

\maketitle

\begin{abstract}
  We give lower bounds on the complexity of the word problem of
  certain non-solvable groups: for a large class of non-solvable
  infinite groups, including in particular free groups, Grigorchuk's
  group and Thompson's groups, we prove that their word problem is
  $\Nc1$-hard. For some of these groups (including Grigorchuk's
  group and Thompson's groups) we prove that the compressed word problem (which is equivalent
  to the circuit evaluation problem) is \PSPACE-complete.
\end{abstract}

\section{Introduction}

The {\em word problem} of a finitely generated group $G$ is the most fundamental algorithmic problem in group theory: given a word over the generators of $G$, the question is whether this word represents the identity of $G$.
The original motivation for the word problem came from topology and group theory \cite{dehn11}, within Hilbert's ``Entscheidungsproblem''. Nevertheless, it also played a role in early computer science when Novikov and Boone constructed finitely presented groups with an undecidable word problem  \cite{boone59,nov55}. Still, in many classes of groups it is (efficiently) decidable, a prominent example being the class of linear groups: Lipton and Zalcstein \cite{LiZa77} (for linear groups over a field of characteristic zero)  and Simon \cite{Sim79}
(for linear groups over a field of prime characteristic) showed that their word problem is in \L.

The class \Nc{1} consists of those languages that are accepted by families of boolean circuits of logarithmic depth. When combined with certain uniformity conditions it yields the subclass \ALOGTIME which is is contained in \L~--- so it is a very small complexity class of problems efficiently solvable in parallel.
A striking connection between the word problem for groups and complexity theory was established by Barrington \cite{Barrington89}: for every finite non-solvable
group $G$, the word problem of $G$ is $\Nc1$-complete.
Moreover, the reduction is as simple as it could be: every output bit depends on only one input bit. Thus, one can say that \Nc1 is completely characterized via group theory. Moreover, this idea has been extended to characterize \ACC by solvable monoids \cite{bt88jacm}. 
On the other hand, the word problem of a finite $p$-group is in $\ACC[p]$, so Smolensky's lower bound \cite{Smolensky87} implies that it is strictly easier than the word problem of a finite non-solvable group.

Barrington's construction is based on the observation that an {\sf and}-gate can be simulated by a commutator. 
This explains the connection to non-solvability. In this light it seems natural that the word problem of finite $p$-groups is not \Nc{1}-hard: they are all nilpotent, so iterated commutators eventually become trivial.
For infinite groups, a construction similar to Barrington's was used by Robinson \cite{Robinson93phd} to show that the word problem of a non-abelian free group is \Nc1-hard. Since by \cite{LiZa77} the word problem of a free group is in \L, the complexity is narrowed down quite precisely (although no completeness results has been shown so far).

The first contribution of this paper is to identify the essence of Barrington's and
Robinson's constructions. For this we introduce a strengthened condition of
non-solvability, which we call \emph{SENS (strongly efficiently non-solvable)}; see~\cref{def:SENS}. In a SENS group there 
are balanced nested commutators of arbitrary depth and whose word length grows at most exponentially. We also 
introduce {\em uniformly SENS} groups, where these balanced commutators are efficiently computable in a certain sense.
We then follow Barrington's arguments and show that every for every (uniformly) SENS group the word problem is hard for (uniform)  \Nc{1}
(Theorems~\ref{thm:NC1hard} and \ref{thm:uniNC1hard}).
That means that for every non-solvable group $G$, the word problem for $G$ is \Nc{1}-hard, unless the word length of the $G$-elements witnessing the non-solvability grows very fast
(we also give in Example~\ref{ex:nonENS} a non-solvable group in which the latter happens).

\theoremstyle{plain}
\newtheorem{corollaryA}{Corollary}
\renewcommand*{\thecorollaryA}{\Alph{corollaryA}}
\newtheorem{theoremA}[corollaryA]{Theorem}

Finite non-solvable groups and non-abelian free groups are easily seen to be uniformly SENS.
We go beyond these classes and present a general criterion that implies the uniform SENS-condition.
Using this criterion we show that {\em Thompson's groups} \cite{CaFlPa96} and {\em weakly branched self-similar groups} \cite{BartholdiGrigorchukSunic03,Nekrashevych05}
are uniformly SENS. As a corollary we get:


\begin{corollaryA} \label{coro:A}
	The word problems for the following groups are hard for $\ALOGTIME$:\begin{itemize}
		\item the three Thompson's groups $F$, $T$, and $V$, 
		\item weakly branched self-similar groups with a finitely generated branching subgroup.
	\end{itemize} 
\end{corollaryA}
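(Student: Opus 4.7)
The plan is to reduce the corollary directly to \cref{thm:uniNC1hard}: it suffices to exhibit, for each listed group, a family of balanced nested commutators of exponentially bounded length together with a uniform procedure producing them, i.e.\ to verify the uniform SENS condition. The argument then splits into two geometric cases, one for self-similar groups acting on rooted trees and one for Thompson's groups acting on dyadic subdivisions, each of which ought to slot into the general criterion announced between \cref{thm:uniNC1hard} and the statement of Corollary~\ref{coro:A}.

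For a weakly branched self-similar group $G\le\Aut(T)$ with finitely generated branching subgroup $K$, I would exploit the rigid branch structure: for every vertex $v$ of the tree there is an injective copy $K_v \le G$ of $K$ that acts only on the subtree rooted at $v$, and whenever $v_1, v_2$ are incomparable one has $[K_{v_1},K_{v_2}]=1$. Pick $k_1,k_2\in K$ with $[k_1,k_2]\ne 1$ (such elements exist because a branching subgroup is never abelian in the weakly branched setting, once one iterates inside $K$ this persists). I would then build nested commutators recursively by placing the two factors at appropriate siblings in a balanced binary pattern inside $T$: at depth $d$, the innermost factors sit at vertices of depth $\Oh(d)$, and the whole expression is a balanced commutator of depth $d$. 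Because $G$ is self-similar, the generating words for $K_v$ are obtained from the fixed generators of $K$ by prepending a tree-address of length $\Oh(\abs{v})$, so each depth-$d$ commutator has word length $\Oh(2^d)$ and is produced uniformly from the bits of $d$ by a straightforward recursive rewriting.

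For the Thompson groups $F$, $T$, $V$ I would play the analogous game on the dyadic subdivision of $[0,1]$ (resp.\ the Cantor set). The fundamental observation is that if $I_1,I_2$ are dyadic intervals with disjoint interiors, then the rigid stabilizers $G_{I_1}$ and $G_{I_2}$ commute pointwise, and each rigid stabilizer of a proper dyadic interval is, up to the usual rescaling isomorphism, a copy of $F$ inside $G$. Choosing two elements $g_1,g_2 \in F$ with $[g_1,g_2]\ne 1$ and supported in, say, $[0,1/2]$ and $[1/2,1]$, I would recursively rescale them into nested dyadic halves: at depth $d$ the inner commutators are conjugates of $g_1,g_2$ by tree-pair elements of bit-size $\Oh(d)$, hence have word length $\Oh(d)$ over the standard generators, and the full balanced commutator of depth $d$ has length $\Oh(2^d)$. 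The tree-pair description of the rescaling conjugators is computable in \DLOGTIME-uniform fashion from the address of the target subinterval, which gives the uniformity required by \cref{thm:uniNC1hard}.

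The main obstacle in both cases is not the algebraic existence of deep nested commutators, which is essentially immediate from non-solvability of $K$ and of $F$, but rather the uniformity: one must write down the depth-$d$ commutator as a word over the fixed generating set by a procedure whose complexity matches the hypothesis of the SENS theorem. For self-similar groups this hinges on the fact that the self-similarity maps $K\to K_v$ are implemented by prepending short address prefixes, which is trivially uniform; for Thompson's groups it hinges on explicit, uniformly constructible tree-pair diagrams that rescale $[0,1]$ onto an arbitrary dyadic subinterval. Once these uniform constructions are in hand, \cref{thm:uniNC1hard} immediately yields \ALOGTIME-hardness of the corresponding word problems, completing the proof of \cref{coro:A}.
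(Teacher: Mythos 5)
Your high-level plan --- prove uniform SENS and invoke \cref{thm:uniNC1hard} --- is exactly the paper's reduction path, but the specific way you try to manufacture the nested commutators contains a contradiction that sinks both halves of your argument.

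For the weakly branched case, you propose to pick $k_1,k_2\in K$ with $[k_1,k_2]\ne 1$ and then ``place the two factors at appropriate siblings in a balanced binary pattern inside $T$.'' But as you yourself note in the paragraph on Thompson's groups, rigid stabilizers of incomparable vertices commute: $[K_{v_1},K_{v_2}]=1$ whenever $v_1$ and $v_2$ are incomparable. So once you copy $k_1$ and $k_2$ into disjoint subtrees, the resulting commutator is trivial, and Condition~(\ref{SENSc}) of \cref{def:SENS} fails at the very first level. The same issue appears verbatim for Thompson's groups: you ask for $g_1,g_2\in F$ with $[g_1,g_2]\ne1$ and $g_1$ supported in $[0,1/2]$, $g_2$ supported in $[1/2,1]$ --- such a pair cannot exist, because elements with disjoint support commute (this is precisely the ``fundamental observation'' you invoke in the preceding sentence). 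So the base of your recursion is vacuous. More generally, any scheme in which the two arguments of each commutator are rescaled into disjoint subtrees/subintervals collapses to the identity.

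What actually makes the construction work (this is what \cref{prop:wreathSENS} does, and what the explicit recursions for $F$ and for the Grigorchuk group in \cref{thm:grigorchuk_sens} do) is the opposite of disjointness: one chooses elements $h_d$ that all act along a common \emph{spine} $v, vv, vvv, \dots$ in the tree, with $h_d$ supported below $v^d$ and shifting $v^{d+1}$ off the spine. Then $g_{d,s,q}=[g^s_{d-1,-1,0},g_{d-1,1,q+1}]$ is non-trivial \emph{because} the two inner terms overlap deep inside the spine, and the overlap survives the commutator in a controlled way (the proof tracks a component $v^{d+q}*k_{d+q}$ that cannot be cancelled). The side components that do land in incomparable subtrees are irrelevant junk; they are there only to absorb the ``cross'' terms. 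For Thompson's group $F$ the concrete version of this is the self-replicating identity $g = [g^{x_1},\,g^{x_0^{-1}x_1}]$ with $g=x_3x_2^{-1}$: the two conjugates of $g$ are not supported on disjoint intervals, and that is the point. So your proposal needs to be reworked around this ``spine'' mechanism (or, equivalently, around the self-embedding $G\wr H\le G$ of \cref{thm:wreathSENS} applied to $F\wr\Z\le F$ for Thompson's and to $K^X\le K$ for weakly branched groups); the disjoint-support picture you sketched cannot produce a non-trivial $g_{d,\eps}$.
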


Thompson's groups $F < T < V$ (introduced in 1965) belong due to their unusual properties to the most intensively studied
infinite groups. From a computational perspective it is interesting to note that all three Thompson's groups are co-context-free (i.e., the set of all non-trivial words
over any set of generators is a context-free language) \cite{LehSchw07}. This implies that the word problems for Thompson's groups are in \LOGCFL. To the best of our knowledge
no better upper complexity bound is known.  Weakly branched groups form an important subclass of the self-similar groups \cite{Nekrashevych05}, containing several celebrated groups like the Grigorchuk group (the first example of a group with intermediate word growth) and the Gupta-Sidki groups.
 We also show that the word problem for contracting
self-similar groups is in \L. This result is well-known, but to the
best of our knowledge no proof appears in the literature. 
The Grigorchuk group as well as the Gupta-Sidki groups are contracting and have finitely generated branching subgroups.

Another corollary of \cref{thm:uniNC1hard} is the following dichotomy result for finitely generated linear groups:
for every finitely generated linear group the word problem is in \DLOGTIME-uniform \TC or \ALOGTIME-hard (\cref{thm-linear-groups}).
To prove this we use Tits alternative (every finitely generated linear group either contains a free group of rank two or is virtually solvable) \cite{Tits72}
together with a result from \cite{KonigL15} stating that the word problem for a finitely generated solvable linear group is in 
\DLOGTIME-uniform \TC.

In the second part of the paper we study the {\em compressed word problem} \cite{Lohrey14compressed}. This is a succinct version of the word problem, where 
the input word is represented by a so-called straight-line program. A straight-line program is a context-free grammar that produces
exactly one string. The length of this string can be exponentially larger than the size of the straight-line program. 
The compressed word problem for a  finitely generated group $G$  is equivalent to the {\em circuit evaluation problem} for $G$. In the latter the 
input is a circuit where the input gates are labelled with generators of $G$ and the internal gates compute the product of their inputs. 
There is a distinguished output gate, and the question is whether this output gate evaluates to the group identity. 
For finite groups (and also monoids), the circuit evaluation problem has been studied in \cite{BeMcPeTh97}.
The circuit viewpoint also links the compressed word problem to the famous polynomial identity testing problem (the question whether
an algebraic circuit over a polynomial ring evaluates to the zero-polynomial); see \cite{ShpilkaY10} for a survey: it is shown in \cite{Lohrey14compressed} that
the compressed word problem for the group $\mathrm{SL}_3(\Z)$ is equivalent to  polynomial identity testing problem with respect 
to polynomial time reductions \cite[Theorem~4.16]{Lohrey14compressed}.

From a group theoretic viewpoint, the compressed word problem is interesting not only because group elements are naturally represented as straight line programs, but also because several classical (uncompressed) word problems
reduce to compressed word problems. For instance, the word problem for a finitely generated subgroup of $\Aut(G)$ reduces to the compressed
word problem for $G$ \cite[Theorem~4.6]{Lohrey14compressed}. Similar statements hold for certain group extensions \cite[Theorems~4.8 and~4.9]{Lohrey14compressed}. This motivates the search for groups in which the compressed word
problem can be solved efficiently.
For the following groups, the compressed word problem can be solved in polynomial time: finitely generated nilpotent groups \cite{KonigL15}
(for which the compressed word problem can be even solved in $\Nc{2}$), hyperbolic groups \cite{HoltLS19}
(even groups that are hyperbolic relative to a collection of free abelian subgroups \cite{HoltRees20})
 and virtually special groups \cite{Lohrey14compressed}. The latter are defined
as finite extensions of subgroups of right-angled Artin groups and form a very rich 
class of groups containing for instance Coxeter groups \cite{HagWi10}, fully residually free groups \cite{Wis09}
and fundamental groups of hyperbolic 3-manifolds \cite{Agol12}. 
Moreover, for finitely generated linear groups the compressed
word problem belongs to $\mathsf{coRP}$ (complement of randomized polynomial time).

In this paper, we are mainly interested in groups in which the compressed word problem is hard or intractable.
Indeed, it is known that the compressed word problem
for non-solvable finite groups and non-abelian free groups is \P-complete \cite{BeMcPeTh97,Loh06siam}. The proofs for these results use again the above mentioned constructions of Barrington and Robinson. Starting from this observation we introduce a variant of the uniform SENS-condition and show that every group satisfying this condition has a \P-hard compressed word problem. However, we go even further:
Recently, W\"achter and the fourth author  constructed an automaton group (a finitely generated group of tree automorphism, where the action of
generators is defined by a  Mealy automaton) with a \PSPACE-complete word problem and
\EXPSPACE-complete compressed word problem \cite{WaeWei19}~-- thus, the compressed word problem is provably more difficult than the word problem. The group arises from a quite technical construction; in particular one cannot 
call this group natural. Here, we exhibit several natural groups (that were intensively studied in other parts of mathematics) with a \PSPACE-complete
compressed word problem and a word problem in \L:  
\begin{corollaryA}
	The compressed word problem for the following groups is \PSPACE-complete: wreath products $G \wr \Z$ where $G$ is finite non-solvable or 
	free of rank at least two,  Thompson's groups, the Grigorchuk group, and all Gupta-Sidki groups.
\end{corollaryA}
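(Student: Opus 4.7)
My plan is to prove PSPACE-completeness by separately establishing PSPACE-hardness (via a compressed analogue of the SENS machinery developed earlier in the paper) and PSPACE-membership (via structural algorithms tailored to each family).

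\textbf{Lower bound.} I would introduce a strengthened condition, call it \emph{compressed SENS}, requiring a polynomial-time computable family of straight-line programs whose outputs are balanced nested commutators of depth $2^{\Oh(n)}$ that remain non-trivial in the group. Since Barrington's trick simulates each $\mathsf{and}$-gate by one commutator, a compressed-SENS family lets us encode $\ALOGTIME$ circuits of exponential size, i.e., alternating polynomial time, which equals \PSPACE. Verifying compressed SENS for each group on the list then follows the same case analysis as in Corollary~A, upgraded from $\poly$ to $\exp$ depth: for $G \wr \Z$ with $G$ finite non-solvable or free of rank $\geq 2$, the shift in $\Z$ lets one replicate a non-trivial commutator of $G$ at exponentially many coordinates produced by repeated squaring inside an SLP; for the Thompson groups $F \le T \le V$, polynomially many generators address a binary tree of dyadic intervals of exponential depth and the corresponding nested commutators are non-trivial thanks to the simplicity of $[F,F]$ (respectively $T$ and $V$); for the Grigorchuk and Gupta-Sidki groups, the self-similar embedding of the finitely generated branching subgroup into arbitrarily deep levels of the rooted tree is itself an endomorphism that can be iterated exponentially many times by an SLP, producing nested commutators of depth $2^{\Oh(n)}$ that are non-trivial since each iteration lands in a non-trivial branching subgroup.

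\textbf{Upper bound.} For $G \wr \Z$ with $G$ having a polynomial-time compressed word problem (the case here: finite non-solvable groups and non-abelian free groups of rank $\geq 2$), an element is a pair $(f,n)$ with $f:\Z \to G$ of finite support, and given an SLP of size $s$ both $n$ and any position $i$ in the support have magnitude at most $2^{\Oh(s)}$, hence fit in $\Oh(s)$ bits. I would enumerate candidate positions $i$ in polynomial space, assemble the SLP-compressed product of $G$-letters at coordinate $i$ by polynomial-space SLP manipulations, and invoke the polynomial-time compressed word solver for $G$ to test triviality at that coordinate. For the Thompson groups $F$, $T$, $V$, I would unfold an SLP-compressed element into an exponentially large tree-pair diagram and verify triviality by a depth-first traversal addressing each node by a polynomial-length binary string, fitting in polynomial space. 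For the contracting weakly branched self-similar groups (in particular Grigorchuk and Gupta-Sidki), I would use the self-similar decomposition $g = (g_0,\ldots,g_{d-1})\sigma$ recursively: contraction guarantees that the recursion reaches generator-level elements after $\poly(s)$ levels, and by threading the recursion depth-first and storing only the current path together with polynomial-size symbolic descriptions of the current components, the whole decision fits in polynomial space.

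\textbf{Main obstacle.} I expect the hardest step to be the PSPACE-hardness proof for Thompson's groups, where, unlike the self-similar case, there is no single endomorphism that doubles commutator depth for free. One must exhibit explicit SLPs producing homeomorphisms supported on dyadic subintervals of exponentially small length whose nested commutators implement the alternation needed to encode QBF, and one must rule out cancellation of these nested commutators at every level by exploiting the simplicity of $[F,F]$ (respectively $T$, $V$). A secondary difficulty is showing that the recursive SLP manipulations for the self-similar groups really stay polynomial in size at every recursion level; this should follow from the uniformity of the self-similarity map, but verifying it carefully for the Gupta--Sidki family (where the branching map is ternary and less symmetric than for Grigorchuk) requires some bookkeeping.
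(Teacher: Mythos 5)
There is a genuine gap in your lower bound. You propose a ``compressed SENS'' notion in which polynomial-size SLPs produce balanced nested commutators of \emph{depth} $2^{\Oh(n)}$. But a balanced nested commutator of depth $d$ over non-trivial leaves is a word of length $\Theta(4^d)$ (each commutator $[x,y]=x^{-1}y^{-1}xy$ quadruples the length), so depth $2^{\Oh(n)}$ forces word length $4^{2^{\Oh(n)}}$, which is \emph{doubly} exponential. An SLP of size $s$ can only produce a word of length at most $3^{s/3}$, so a polynomial-size SLP produces words of at most singly exponential length. Hence the objects you posit cannot exist, and there is no direct ``compressed Barrington'' argument along the lines you sketch; this issue also breaks your claim that ``the self-similar embedding \ldots can be iterated exponentially many times by an SLP'' for Grigorchuk and Gupta--Sidki. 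The paper's notion of compressibly SENS keeps the commutator depth polynomial and only yields \P-hardness (not \PSPACE-hardness); reaching \PSPACE\ requires a structurally different mechanism.

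What the paper actually does is route the lower bound through leaf languages and wreath products. The key technical result is that $\CompWP(G\wr\Z)$ is hard for $\forall\LEAF(\WP(G/Z(G)))$: the computation tree of a polynomial-time balanced NTM produces an exponentially long leaf string over the generators of $G$, and this string is not compressed into a single deep commutator but instead distributed over the $\Z$-coordinates of the base group $G^{(\Z)}$, using an SLP encoding of super-decreasing subsetsum instances to address positions. Combined with the padding argument showing $\PSPACE\subseteq\LEAF(\WP(G/Z(G)))$ whenever $G$ is uniformly SENS, this gives $\PSPACE$-hardness of $\CompWP(G\wr\Z)$. To descend from $G\wr\Z$ (or $G\wr(\Z/p^n)$) to $G$ itself, the paper then uses the self-embeddings $F\wr\Z\le F$ for Thompson's group and $K\wr(\Z/p)\le K$ for weakly branched groups with torsion in the branching subgroup $K$, computing the embedded SLP in logspace. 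You allude to the self-similar and Thompson embeddings, but without the wreath/leaf framework you have no way to turn an exponential computation tree into a polynomial SLP.

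Your upper bound is in the right spirit but more complicated than necessary: the paper simply observes that the \emph{uncompressed} word problem of each group lies in $\polyL$ (via Waack for wreath products, via the co-context-free property and $\LOGCFL\subseteq\DSPACE(\log^2 n)$ for Thompson's groups, and via the contracting property for Grigorchuk and Gupta--Sidki), and then invokes the general fact that an SLP can be unpacked by a \PSPACE-transducer, so $\polyL$ word problem implies \PSPACE\ compressed word problem. Your hand-rolled recursive algorithms would need careful bookkeeping about intermediate representation sizes, which the uniform ``unpack and solve'' argument sidesteps entirely.
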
  
The group theoretic essence in order to get \PSPACE-hardness is a certain self-embedding property: we need a group 
$G$ such that a wreath product $G \wr A$ embeds into $G$ for some $A \neq 1$. Thompson's group $F$ has
this property for $A = \Z$ \cite{GubaSapir99}. For a weakly branched group $G$ that satisfies an additional technical condition (the branching subgroup 
$K$ of $G$ is finitely generated and has elements of finite order) we show that one can take $A =\Z/p$ for some $p \geq 2$. The above self-embedding property
allows us to carry out a subtle reduction from the leaf language class defined by the group $G$ to the compressed word problem for $G$.

\section{General notations}\label{sec:prelims}

For $a,b \in \mathbb{Z}$ we write $\ccinterval ab$ for the interval
$\{ z \in \mathbb{Z} \mid a \leq z \leq b\}$. 
We use common notations from formal language theory. In particular, we use $\Sigma^*$ to denote the set of words over an alphabet $\Sigma$ including the {\em empty word} $\eps$.
Let $w = a_0 \cdots a_{n-1} \in \Sigma^*$ be a word over $\Sigma$
($n\geq 0$, $a_0, \ldots, a_{n-1} \in \Sigma$).
The \emph{length} of $w$ is $|w| = n$. We write $\Sigma^{\leq d}$ for $\{ w \in \Sigma^* \mid \abs{w} \leq d\}$ and 
$\Sigma^{< d}$ for $\{ w \in \Sigma^* \mid \abs{w} < d\}$.
For a letter $a \in \Sigma$ let $|w|_a = |\{ i \mid a = a_i \}|$
be the number of occurrences of $a$ in $w$.
For $0 \leq i < n$ let $w[i] = a_i$ and
for $0 \leq i \leq j < n$ let $w[i:j] = a_i a_{i+1} \cdots a_j$.
Moreover $w[:i] = w[0:i]$. Note that in the notations $w[i]$ and $w[i:j]$ we take $0$ as the first position in $w$.
This will be convenient later.

The lexicographic order on $\mathbb{N}^*$ is defined as follows:
a word $u \in \mathbb{N}^*$ is lexicographically smaller than a word $v \in \mathbb{N}^*$ if  
either $u$ is a prefix of $v$ or there exist $w,x,y \in
\mathbb{N}^*$ and $i,j \in \mathbb{N}$ such that $u = wix$, $v = wjy$, and $i < j$.

A {\em finite order tree} is a finite set $T \subseteq \mathbb{N}^*$ such that for all $w \in \mathbb{N}^*$, $i \in
\mathbb{N}$: if $wi \in T$, then $w,wj  \in T$ for every $0 \leq j < i$.
The set of {\em children} of $u \in T$ is $u \mathbb{N} \cap T$.
A node $u \in T$ is a leaf of $T$ if it has no children.
A \emph{complete binary tree} is a subset
$T \subseteq \{0,1\}^*$ such that $T = \{ s \in \{0,1\}^* \mid |s| \leq k \}$ for some $k \geq 0$ where $k$ is called the \emph{depth} of $T$.

The boolean function $\nand : \{0,1\}^2 \to \{0,1\}$ (negated {\sf and}) is defined by
$\mathsf{nand}(0,0) =  \mathsf{nand}(0,1) = \mathsf{nand}(1,0) = 1$ and   $\mathsf{nand}(1,1)=0$.
Note that the standard boolean functions {\sf not} and binary {\sf and} and {\sf or} can be expressed in terms of \nand.

\section{Groups}

We assume that the reader is familiar with the basics of group theory, see e.g. \cite{HRR2017,Rot95} for more details.
Let $G$ be a group. We always write $1$ for the group identity element.
The group $G$ is called {\em finitely generated} if there exist a finite
set $S$ and a surjective homomorphism of the free group over $S$ onto
$G$. In this situation, the set $\Sigma = S \cup S^{-1}\cup\oneset{1}$ is
our preferred generating set for $G$ and we have a surjective monoid
homomorphism $\pi\colon \Sigma^* \to G$. The symbol $1$ is useful for
padding. We call the generating set $\Sigma$ \emph{standard}. We have a natural involution on words over $\Sigma$ 
defined by $(a_1\cdots a_n)^{-1} = a_n^{-1}\cdots a_1^{-1}$ for $a_i \in \Sigma$ (which is the same as forming inverses in the group).
For words $u,v \in \Sigma^*$ we usually say that $u = v$ in $G$ or
$u =_G v$ in case $\pi(u) = \pi(v)$. 
For group elements $g,h \in G$ or words $g,h\in \Sigma^*$ 
 we write $g^h$ for the {\em conjugate} $h^{-1} g h$ and $[h, g]$ for the {\em commutator} $h^{-1} g^{-1} hg$. We call $g$ a \emph{$d$-fold nested commutator}, if $d= 0$ or $g = [h_1, h_2]$ for $(d-1)$-fold nested commutators $h_1,h_2$.

A \emph{subquotient} of $G$ is a quotient of a subgroup of $G$. The {\em center} of $G$, $Z(G)$ for short, is the set of all elements $g \in G$ that
commute with every element from $G$. The center of $G$ is a normal subgroup of $G$.

 The {\em word problem} for the
finitely generated group $G$, $\WP(G)$ for short, is defined as
follows:
\begin{description}
	\item[Input:]   a word $w \in \Sigma^*$.
	\item [Question:] does $w=_G 1$ hold? 
\end{description}
We will also write $\WP(G,\Sigma)$ for the set $\{ w \in \Sigma^* \mid w=_G 1 \}$.

The word problem may be stated for any group whose elements may
be written as words over a finite alphabet. This applies to
subquotients $H/K$ of $G$ (also if $H$ is not finitely generated): given a word $w\in\Sigma^*$ with the
guarantee that it belongs to $H$, does it actually belong to $K$? Note
that the decidability of this problem depends on the actual choice of
$H$ and $K$, not just on the isomorphism type of $H/K$.

 We will consider groups $G$ that act on a set $X$
 on the left or right.
For $g \in G$ and $x \in X$ we write $x^g \in X$ (resp., ${}^g\!x$) for the result of a right (resp., left) action.
A particularly important case arises when $G = \Sym X$ is the symmetric
group on a set $X$, which acts on $X$ on the right.

\subsection{Wreath products} \label{sec-wreath}

A fundamental group construction that we shall use is the \emph{wreath product}: 
given groups $G$ and $H$ acting on the right on sets $X$ and $Y$ respectively, their
\emph{wreath product} $G\wr H$ is a group acting on $X\times Y$. 
We start with the restricted direct product $G^{(Y)}$ (the base group) of all mappings $f : Y \to G$ having finite support
$\supp(f) = \{ y \mid f(y) \neq 1 \}$ with the operation of pointwise multiplication. 
The group $H$ has a natural left action on
$G^{(Y)}$: for $f \in G^{(Y)}$ and $h \in H$, we define ${}^h\!f \in G^{(Y)}$ by $({}^h\!f)(y) = f(y^h)$. 
The corresponding semidirect product $G^{(Y)} \rtimes H$ is the \emph{wreath product} $G \wr H$.  In other words:
\begin{itemize}
\item
Elements of $G \wr H$ are pairs $(f,h) \in G^{(Y)} \times H$ and we simply write $fh$ for this pair.
\item
The multiplication in $G \wr H$ is defined as follows:
Let $f_1h_1, f_2h_2 \in G \wr H$. Then
$f_1h_1f_2h_2 = f_1 {}^{h_1}\!\!f_2 h_1h_2$, where
the product $f_1 {}^{h_1}\!\!f_2\colon y \mapsto f_1(y) f_2(y^{h_1})$ is the pointwise product.
\end{itemize}
The wreath product $G \wr H$ acts on $X \times Y$ by $(x,y)^{fh} = (x^{f(y)},y^h)$.
The wreath product defined above is also called the {\em (restricted) permutational wreath product}. 
There is also the variant where $G=X$ and $H=Y$ and both groups act on themselves by right-multiplication, which is called
the {\em (restricted) regular wreath product} (or {\em standard wreath product}). A subtle point is that the permutational wreath product is an associative operation whereas
the regular wreath product is in general not. The term ``restricted'' refers to the fact that the base group is $G^{(Y)}$, i.e., only finitely supported mappings are taken into account.
If $G^{(Y)}$ is replaced by $G^Y$ (i.e., the set of all mappings from $Y$ to $G$ with pointwise multiplication), then one speaks of an unrestricted wreath product.
For $Y$ finite this makes of course no difference.
There will be only two situations (Examples~\ref{ex:nonENS} and~\ref{ex:nonENS2}) where we need an unrestricted wreath product.
The action of $G$ on $X$ in the permutational wreath product is usually not important for us, but it is nice to have an associative
operation. For the right group $H$, we will only make use of the following cases: 
\begin{itemize}
\item $H = \Sym Y$ acting on $Y$,
\item $H$ a (finite or infinite) cyclic group acting on itself.
\end{itemize}
Thus, if $H$ is cyclic, the permutational wreath product and the regular wreath product (both denoted by $G \wr H$) coincide. Nevertheless, be aware that $G \wr (H \wr H) = (G \wr H) \wr H$ holds only for the permutational wreath product even if $H$ is cyclic. 
Note that if $G$ is generated by $\Sigma$ and $H$ is generated by $\Gamma$ then $G \wr H$ is 
generated by $\Sigma \cup \Gamma$.

\subsection{Richard Thompson's groups}

In 1965 Richard Thompson introduced three finitely presented groups $F < T < V$ acting  on the
unit-interval, the unit-circle and the Cantor set, respectively.
Of these three groups, $F$ received most attention (the reader should not confuse $F$ with a free group). 
This is mainly due to the still open conjecture that
$F$ is not amenable, which would imply that $F$ is another counterexample to a famous conjecture 
of von Neumann (a counterexample was found by  Ol'shanskii). A standard reference of 
Thompson's groups is \cite{CaFlPa96}. The group $F$ consists of all homeomorphisms of the unit interval that
are piecewise affine, with slopes a power of $2$ and dyadic
breakpoints. Famously, $F$ is generated by two elements $x_0,x_1$ defined by
\[x_0(t)=\begin{cases}2t & \text{ if }0\le t\le\frac14,\\
    t+\frac14 & \text{ if }\frac14\le t\le\frac12,\\
    \frac t2+\frac12 & \text{ if }\frac12\le t\le1,
  \end{cases}\qquad x_1(t)=\begin{cases} t & \text{ if }0\le t\le\frac12,\\
    \frac12+\frac{x_0(2t-1)}2 & \text{ if }\frac12\le t\le 1.\end{cases}
\]
The pattern repeats with $x_{n+1}$ acting trivially on the left
subinterval and as $x_n$ on the right subinterval. We have
$x_{k+1}=x_k^{x_i}$ for all $i<k$. In fact,
\begin{equation} \label{eq:F}
F = \langle x_0, x_1, x_2, \ldots \mid x_k^{x_i}  = x_{k+1} (i<k) \rangle =  \langle x_0, x_1 \mid [x_0 x_1^{-1} \!\!\:,\,  x_0^{-1} x_1 x_0],  [x_0 x_1^{-1} \!\!\:,\, x_0^{-2} x_1 x^2_0]   \rangle .
\end{equation}
The group $F$ is orderable (so in particular torsion-free), its derived subgroup $[F,F]$ is simple and the center of $F$ is trivial.
Important for us is the following fact:

\begin{lemma}[{\cite[Lemma~20]{GubaSapir99}}]\label{lem:GS}
  The group $F$ contains a subgroup isomorphic to $F\wr\Z$.
\end{lemma}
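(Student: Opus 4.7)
The plan is to realise $F\wr\Z$ inside $F$ by exploiting the fact that the pointwise stabiliser in $F$ of the complement of any dyadic subinterval of $[0,1]$ is itself isomorphic to $F$. I will exhibit an element $t\in F$ of infinite order together with a bi-infinite family $(K_n)_{n\in\Z}$ of dyadic subintervals of $(0,1)$ with pairwise disjoint interiors such that $t(K_n)=K_{n+1}$, and then place a copy of $F$ on each $K_n$ to assemble the base group $F^{(\Z)}$.

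For $t$ I take the standard generator $x_0$. Setting $a_n=x_0^{\,n}(1/4)$ and $K_n=[a_n,a_{n+1}]$, one checks from the piecewise definition of $x_0$ that $a_0=1/4$, $a_1=1/2$, $a_n=1-2^{-n}$ for all $n\ge 1$, and $a_{-n}=2^{-n-2}$ for all $n\ge 1$. In particular each $K_n$ is a dyadic interval contained in a single one of the three affine pieces $[0,1/4]$, $[1/4,1/2]$, $[1/2,1]$ of $x_0$, so $x_0\restriction K_n\colon K_n\to K_{n+1}$ is an orientation-preserving affine bijection whose slope is a power of $2$. The $K_n$ have pairwise disjoint interiors with $\bigcup_{n\in\Z}K_n=(0,1)$, and $t=x_0$ has infinite order because $F$ is torsion-free.

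For each $n$ let $F_n\le F$ be the subgroup of elements that are the identity outside $K_n$. Via the unique orientation-preserving affine bijection $\phi_n\colon K_n\to[0,1]$ each $F_n$ is isomorphic to $F$. Since the $K_n$ have pairwise disjoint interiors, the $F_n$ pairwise commute and together generate their restricted direct sum $\bigoplus_{n\in\Z}F_n\cong F^{(\Z)}$. Conjugation by $t$ sends $F_n$ onto $F_{n+1}$; moreover, since $\phi_n$ and $\phi_{n+1}\circ(t\restriction K_n)$ are both orientation-preserving affine bijections $K_n\to[0,1]$, they coincide, so the induced conjugation isomorphism $F_n\to F_{n+1}$ becomes the identity on $F$ under the identifications $\phi_n,\phi_{n+1}$. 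Hence there is a well-defined surjective homomorphism $F\wr\Z=F^{(\Z)}\rtimes\Z\twoheadrightarrow H:=\langle F_n,\,t\mid n\in\Z\rangle\le F$.

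The real work, and the main obstacle I anticipate, is injectivity of this surjection. Take a nontrivial element of $F^{(\Z)}\rtimes\Z$ in normal form $(f_{n_1}\cdots f_{n_k})\,t^m$ with pairwise distinct indices $n_i$ and $f_{n_i}\in F_{n_i}\setminus\{1\}$. If $m=0$, the product sits inside $\bigoplus_n F_n$ and would, by disjointness of supports, force each $f_{n_i}=1$, a contradiction. If $m\neq 0$, then the relation $t^m=(f_{n_1}\cdots f_{n_k})^{-1}$ in $F$ would force $t^m$ to be supported on the finite union $\bigcup_i K_{n_i}$; but $t^m$ maps every $K_n$ onto the disjoint interval $K_{n+m}$, and in particular is non-identity on each single $K_n$, so its support cannot be contained in finitely many $K_n$. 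Both cases yield a contradiction, so the surjection above is an isomorphism, giving $F\wr\Z\cong H\le F$.
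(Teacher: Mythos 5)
Your argument is correct and follows the same structural idea as the paper (and Guba--Sapir): take $x_0$ as the generator of $\Z$, place a copy of $F$ on a subinterval, and let the conjugates under powers of $x_0$ supply the factors of the base group $F^{(\Z)}$. Where the paper merely names explicit generators $\langle x_1x_2x_1^{-2},\,x_1^2x_2x_1^{-3}\rangle$ for the base copy and refers to Guba--Sapir, you give a self-contained geometric construction (the subgroup supported on $K_0=[1/4,1/2]$, rescaled affinely) together with the disjoint-support/unbounded-support argument for injectivity that the paper leaves implicit; the one small point worth flagging is that the affine rescaling $\phi_n$ only yields an isomorphism $F_n\cong F$ because the $K_n$ all have length a power of $2$, which holds here but not for an arbitrary dyadic subinterval.
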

\begin{proof}
  The copy of $\Z$ is generated by $x_0$, and the copies of $F$ in $F^{(\Z)}$ are 
  the conjugates of
  $\langle x_1 x_2 x_1^{-2},x_1^2x_2 x_1^{-3}\rangle$ under powers of
  $x_0$.
\end{proof}
It follows, by iteration, that $F$ contains arbitrarily iterated
wreath products $\Z\wr\cdots\wr\Z$, as well as the limit
$((\cdots\wr\Z)\wr\Z)\wr\Z$.

\subsection{Weakly branched groups}\label{sec:weaklybranched}

We continue our list of examples with an important class of groups
acting on rooted trees. For more details, the
monographs~\cite{BartholdiGrigorchukSunic03,Nekrashevych05}
serve as good references.

Let $X$ be a finite set.\footnote{There will be one occasion (\cref{prop:wreathSENS}), where we will allow an infinite $X$.} 
The free monoid $X^*$ serves as the vertex
set of a regular rooted tree with an edge between $v$ and $v x$ for
all $v\in X^*$ and all $x\in X$. The group $W$ of automorphisms
of this tree naturally acts on the set $X$ of level-$1$ vertices, and permutes the
subtrees hanging from them. Exploiting the bijection
$X^+=X^* \times X$, we thus have an isomorphism
\begin{equation} \label{eq-phi}
\phi\colon W\to W\wr\Sym X = W^{X} \rtimes \Sym X,
\end{equation}
mapping $g\in W$ to elements $f \in W^{X}$ and $\pi\in\Sym X$ as
follows: $\pi$ is the restriction of $g$ to $X\subseteq X^*$, and 
$f$ is uniquely defined by $(x v)^g= x^\pi v^{f(x)}$. We always
write $g@x$ for $f(x)$ and call it the \emph{state (or coordinate) of $g$ at $x$}.
If $X = \ccinterval{0}{k}$, we write $g = \pair< g@0, \dots, g@k >\pi$.

\begin{definition} \label{def:self-sim}
  A subgroup $G\le W$ is \emph{self-similar} if
  $\phi(G)\le G \wr \Sym X$. In other words: the actions on
  subtrees $x X^*$ are given by elements of $G$ itself.
  A self-similar group $G$ is \emph{weakly branched} if there exists a
  non-trivial subgroup $K\le G$ with $\phi(K)\ge K^X$. In other words:
  for every $k\in K$ and every $x\in X$ the element acting as $k$
  on the subtree $x X^*$ and trivially elsewhere belongs to $K$. A
  subgroup $K$ as above is called a \emph{branching subgroup}.
\end{definition}
Note that we are weakening the usual definition of ``weakly
branched'': indeed it is usually additionally required that $G$ act
transitively on $X^n$ for all $n\in\N$. This extra property is not
necessary for our purposes, so we elect to simply ignore it. In fact,
all the results concerning branched groups that we shall use will be
proven directly from \cref{def:self-sim}.

Note also that the join $\langle K_1 \cup K_2 \rangle$ of two
branching subgroups $K_1$ and $K_2$ is again a branching
subgroup. Hence, there exists a maximal branching subgroup.  It
immediately follows from the definition that, if $G$ is weakly
branched, then for every $v\in X^*$ there is in $G$ a copy of its
branching subgroup $K$ whose action is concentrated on the subtree
$v X^*$. We denote this copy with $v * K$. With $v * k$ ($k \in K$)
we denote the element of $K$ acting as $k$
on the subtree $v X^*$ and trivially elsewhere.

Our main focus is on finitely generated groups. We first note
that the group $W$ itself is weakly branched. 
Here are countable weakly branched subgroups of $W$: For a subgroup $\Pi$ of
$\Sym X$, define $\Pi_\infty\le W$ as follows: set $\Pi_0=1\le W$ (the
trivial subgroup) and $\Pi_{n+1}=\phi^{-1}(\Pi_n\wr \Pi)$. We clearly have
$\Pi_n\le \Pi_{n+1}$, and we set $\Pi_\infty=\bigcup_{n\ge0}\Pi_n$. In words,
$\Pi_n$ consists of permutations of $X^*$ that may only modify the first
$n$ symbols of strings, and $\Pi_\infty$ consists of permutations that
may only modify a bounded-length prefix of strings. Clearly $\Pi_\infty$
is countable and $\phi(\Pi_\infty)=\Pi_\infty\wr \Pi$. 

Numerous properties are known to follow from the fact that a group is
weakly branched. For example, it satisfies no group
identity~\cite{Abert05}. In fact, if $G$ is a weakly branched
self-similar group and its branching subgroup $K$ contains an element of
order $p$, then it $K$ contains a copy of $(\Z/p)_\infty$,
see~\cite[Theorem~6.9]{BartholdiGrigorchukSunic03}.

There exist important examples of finitely generated self-similar
weakly branched groups, notably the {\em Grigorchuk group} $G$,
see~\cite{Grigorchuk80}. It may be described as a self-similar group
in the following manner: it is a group generated by $\oneset{a,b,c,d}$, and
acts on the rooted tree $X^*$ for $X=\oneset{0,1}$. The action, and
therefore the whole group, are defined by the restriction of $\phi$ to
$G$'s generators:
\[\phi(a)=(0,1),\quad\phi(b)=\pair<a,c>,\quad\phi(c)=\pair<a,d>,\quad\phi(d)=\pair<1,b>,\]
where we use the notation $(0,1)$ for the non-trivial element of
$\Sym X$ (that permutes $0$ and $1$) and $\pair<w_0,w_1>$ for a tuple in $G^{\{0,1\}} \cong G \times G$. We record some
classical facts:
\begin{lemma}
  The Grigorchuk group $G$ is infinite, torsion, weakly branched, and all its finite
  subquotients are $2$-groups (so in particular nilpotent). It has a branching subgroup $K$ of finite index, which
  is therefore finitely generated.
\end{lemma}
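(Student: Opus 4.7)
The plan is to verify all five claims by explicit manipulation of the self-similar recursion $\phi$. First observe that all four generators are involutions: $a^2=1$ because $\phi(a)=(0,1)$, while for $x\in\{b,c,d\}$ the formula $\phi(x^2)=\pair<y^2,z^2>$ with $y,z\in\{1,a,b,c,d\}$ reduces by induction on tree level to the identity; in particular $\{1,b,c,d\}$ is a Klein four-group with $d=bc$. For the weakly-branched condition, I would take $K$ to be the standard branching subgroup of the Grigorchuk group, i.e.\ the normal closure in $G$ of a suitable higher commutator of the generators, and check by direct computation with $\phi$ that the chosen generators of $K$ lift to elements of $\phi(K)$ concentrated on a single coordinate. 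Conjugating by $\phi(a)=(0,1)$ swaps the two coordinates, so one also obtains the ``other side'' elements in $\phi(K)$, and taking products yields $K\times K\le\phi(K)$, which is the branching property.

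From $K\times K\le\phi(K)$ and $K\neq 1$, the group $G$ is automatically infinite, since iteration gives $|K|\ge|K|^{2^n}$ for every $n$. The finite-index claim is then handled by computing $G/K$ explicitly: the four generator images satisfy finitely many relations of $2$-power order, and one obtains a finite $2$-group (of order a small power of $2$ in the standard normalization). Since $G$ is finitely generated and $K$ has finite index, $K$ is itself finitely generated by the standard Reidemeister--Schreier argument.

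The main obstacle is the torsion claim, which I would prove by the classical contraction argument of Grigorchuk. Induct on word length $n=|g|$: every $g\in G$ satisfies $g^{2^{f(n)}}=1$ for some bound $f(n)$. After replacing $g$ by $g^2$ if necessary we may assume $g$ lies in the level-$1$ stabilizer, so $\phi(g)=\pair<g_0,g_1>$. Counting occurrences of $a$ and of letters in $\{b,c,d\}$ in a reduced expression for $g$, together with the special form $\phi(d)=\pair<1,b>$ (which contributes nothing on the left and only a single $b$ on the right), yields the key estimate $\max(|g_0|,|g_1|)\le\lceil n/2\rceil$, with strict inequality once $n$ exceeds a small constant. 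The induction then forces a bounded $2$-power of $g$ to act trivially on every level-$1$ subtree, hence to equal the identity. The careful length-bookkeeping across the four possible generator configurations in a reduced word is the technical heart of the argument.

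Finally, the statement about finite subquotients is immediate from torsion: every element of $G$ has order a power of $2$, hence every finite subquotient of $G$ has $2$-power exponent; a finite group of prime-power exponent is a $p$-group, so every finite subquotient of $G$ is a finite $2$-group, and in particular nilpotent.
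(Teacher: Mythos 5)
Your overall architecture matches the paper's: the paper also takes $K$ to be the normal closure of a single commutator (namely $[b,a]$, of index $16$), verifies the branching property by the one-line computation $\phi\bigl([[b,a],d]\bigr)=\pair<1,[b,a]>$ together with conjugation by $a$, and deduces that $K$ is finitely generated from its finite index; your reduction of the $2$-group claim for subquotients to the torsion claim is exactly the paper's. Two points where you diverge: (i) you derive infiniteness from $\phi(K)\ge K\times K$ with $K\ne 1$ rather than citing Grigorchuk --- this is correct and is the ``easy direct proof'' the paper alludes to parenthetically; (ii) you attempt to prove torsion via the contraction argument, whereas the paper simply cites \cite{Grigorchuk80}.

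It is in (ii) that your sketch, as written, does not close. The induction you describe is on word length $n$ with the estimate $\max(|g_0|,|g_1|)\le\lceil n/2\rceil$ for $g$ in the level-$1$ stabilizer. But when $g$ is \emph{not} in the stabilizer you pass to $g^2$, whose length is up to $2n$, and the same estimate then only gives sections of length $\le n$ --- no strict decrease, so the induction hypothesis cannot be applied. The classical proof repairs this with a case analysis on a cyclically reduced word $au_1au_2\cdots au_k$: if some $u_i=d$, the trivial coordinate of $\phi(d)=\pair<1,b>$ forces each section of $g^2$ to have length strictly less than $n$; if no $d$ occurs but some $c$ does, the sections contain $d$'s (by the cycle $b\mapsto c\mapsto d\mapsto b$ of the nontrivial coordinates) and one descends one more level before the length drops; and similarly if only $b$'s occur. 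You flag this bookkeeping as ``the technical heart'' but do not supply it, and the induction quantity you chose (plain word length) is precisely the one for which the naive argument fails. Either carry out that case analysis or, as the paper does, cite the torsion result; everything else in your write-up is sound.
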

(Recall that every weakly branched group is infinite and
non-solvable, since it satisfies no identity. There are also easy
direct proofs of these facts.)
\begin{proof}
  That $G$ is an infinite torsion group is one of the \emph{raison d'\^etre} of $G$, see~\cite{Grigorchuk80}.
  Let $K \leq G$ be the normal closure of $[b,a]$ in $G$.
  It is easy to see that it has index $16$, and $\phi(\bigr[[b,a],d\bigl])=\pair<1,[b,a]>$ so
  $\phi(K)\geq K\times K$ and $G$ is weakly branched; see also \cite{BartholdiGrigorchukSunic03} for details.
  It is known that every element of $G$ has order
  a power of $2$~\cite{Grigorchuk80}, so the same holds for every subquotient of $G$.
\end{proof}
Other examples of finitely generated self-similar
weakly branched groups with a f.g.~branching subgroup include the Gupta-Sidki groups~\cite{GuptaSidki83}, the
Hanoi tower groups~\cite{GrigorchukSunik06}, and all iterated
monodromy groups of degree-$2$ complex 
polynomials~\cite{BartholdiNekrashevych08} except $z^2$ and $z^2-2$.

\subsection{Contracting self-similar groups}

Recall the notation $g@x$ for the coordinates of $\phi(g)$. We
iteratively define $g@v=g@x_1\cdots@x_n$ for any word
$v=x_1\cdots x_n\in X^*$.
\begin{definition}[{\cite[Definition~2.11.1]{Nekrashevych05}}]
  A self-similar group $G$ is called \emph{contracting} if there is a
  finite subset $N\subseteq G$ such that, for all $g\in G$, we have
  $g@v\in N$ whenever $v$ is long enough (depending on $g$).
\end{definition}
If $G$ is a finitely generated contracting group with word norm $\|\cdot\|$ (i.e., for $g \in G$, $\|g\|$ is the length of a shortest
word over a fixed generating set of $G$ that represents $g$), then a more
quantitative property holds: there are constants $0 < \lambda<1$, $h \geq 1$ and $k \geq 0$
such that for all $g\in G$ we have
\[\|g@v\|\le\lambda\|g\|+k\text{ for all }v\in X^h,
\]
see e.g. \cite[Proposition~9.3.11]{HRR2017}.
Then, for $c=-h/\log\lambda$ and a possibly larger $k$ we  have
$g@v\in N$ whenever $\abs v\ge c\log\|g\|+k$.
One of the cornerstones of Nekrashevych's theory of iterated
monodromy groups is the construction of a contracting self-similar
group that encodes a given expanding self-covering of a compact metric space.
 It is well-known and easy to check that the Grigorchuk group, the Gupta-Sidki groups and the Hanoi tower group for three pegs
are contracting.
The following result has been quoted numerous times, but has never
appeared in print. A proof for the Grigorchuk group may be found in~\cite{GaZa91}:
\begin{proposition}\label{prop:contractingL}
  Let $G$ be a finitely generated contracting self-similar group. Then $\WP(G)$ can be solved in \L (deterministic logarithmic space).
\end{proposition}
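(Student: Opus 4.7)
The plan is to exploit the iterated self-similar embedding $\phi\colon G \hookrightarrow G \wr \Sym X$ together with the quantitative contracting property to reduce the word problem to computing the action of $g$ on a finite tree of depth $\Oh(\log n)$. Fix a finite nucleus $N \subseteq G$ that is closed under taking states $h @ x$; iterating the bound $\|g @ v\| \le \lambda \|g\| + k$ for $v \in X^h$ yields a constant $c$ such that $g @ v \in N$ whenever $|v| \ge c\log\|g\| + \Oh(1)$. Since $N$ is finite and the action of $G$ on $X^*$ is faithful (by self-similarity $G \le W$), there is a further constant $e$ such that every non-trivial element of $N$ already moves some vertex in $X^{\le e}$. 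Combining these, if $w$ is a word of length $n$ representing $g$ and $d := \ceil{c \log n} + \Oh(1) = \Oh(\log n)$, then $g =_G 1$ if and only if $g$ fixes every vertex of $X^{d+e}$: the ``only if'' direction is immediate, and conversely trivial action on $X^{d+e}$ means $g$ already acts trivially on $X^d$ and each state $g @ v \in N$ ($v \in X^d$) acts trivially on $X^e$, forcing $g @ v = 1$ by faithfulness, so $g = 1$.

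This reduces the problem to deciding, in logspace, whether $y^g = y$ for every $y \in X^{d+e}$. The outer loop enumerates such $y$ in lexicographic order, which uses $\Oh(\log n)$ bits. For each $y$ we compute $y^g = y^{a_1 a_2 \cdots a_n}$ by scanning $w = a_1 \cdots a_n$ once from left to right, keeping the running vertex $y_i = y^{a_1 \cdots a_i}$ in $\Oh(\log n)$ bits and updating $y_i = y_{i-1}^{a_i}$ at each step. At the end we check $y_n = y$ and accept iff this holds for every enumerated $y$.

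The heart of the algorithm, and the place where the contracting property is indispensable, is the subroutine that computes $y^a$ for a single generator $a$ and a vertex $y = x_0 x_1 \cdots x_{d+e-1}$ in logspace. We descend through $y$ from left to right, maintaining a current state $s \in G$, initially $s = a$; at step $j$ we output $x_j^{\sigma}$ where $\sigma \in \Sym X$ is the top permutation of $s$, and then update $s := s @ x_j$. A priori $s$ is an arbitrary element of $G$ and could grow without bound in word length; this is the main obstacle, and it is resolved by the contracting property, which forces $s \in N$ once $j$ exceeds a fixed constant $j_0$. For $j < j_0$, the state $s$ is looked up in a finite precomputed table of all generator portraits $a @ v$ with $|v| < j_0$; for $j \ge j_0$, $s$ is stored in $\Oh(1)$ bits and updated via the finite transition $(s,x) \mapsto (\sigma_s, s @ x)$ of the nucleus. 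Thus $y^a$ is computed with $\Oh(1)$ space for the current state and $\Oh(\log n)$ space for bookkeeping, and the overall algorithm runs in deterministic logarithmic space.
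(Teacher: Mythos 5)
Your proposal is correct and follows essentially the same two-step strategy as the paper: reduce triviality to fixing all vertices at depth $\Oh(\log n)$ via the quantitative contracting bound plus the finiteness and faithfulness of the nucleus, and then evaluate the action of each generator on a logarithmic-length vertex using only constantly many bits for the current state. The only cosmetic difference is how the state blow-up in the inner subroutine is tamed: the paper enlarges the generating set once and for all to the ball of radius $k/(1-\lambda)$, which is closed under taking states, whereas you use a finite portrait table for the first constantly many levels and the (state-closed) nucleus thereafter; both amount to the same finite-table argument.
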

\begin{proof}
  Fix a finite generating set $\Sigma$ for $G$ and assume that $G$ is contracting with $0 <\lambda<1$, $h \geq 1$ and $k \geq 0$
  as above. We can assume that $k \geq 1$. Let $N$ be the nucleus of $G$.
   By replacing the tree alphabet $X$ by $X^h$ we 
  get $\|g@x\|\le\lambda\|g\|+k$ for all $x\in X$. Hence, if $\|g\| \leq k/(1-\lambda)$ then also
  $\|g@x\|\le k/(1-\lambda)$ for all $x\in X$. We now replace $\Sigma$ by the set of all $g \in G$ with 
  $\|g\| \leq k/(1-\lambda)$ (note that $k/(1-\lambda) \geq 1$) and get  $\phi(\Sigma)\subseteq \Sigma^X\times\Sym X$.
  Furthermore, there exists $m$ such that every non-trivial element of
  $N$ acts non-trivially on $X^m$. Recall that for $c=-1/\log\lambda$ and a possibly larger $k$ we have
  $g@v\in N$ whenever $\abs v\ge c\log\|g\|+k$. Hence, if $g$ is non-trivial then there must exist a $v \in X^*$ with
  $\abs v = c\log\|g\|+k+m$ such that $g$ does not fix $v$.
  
  The following algorithm solves $\WP(G)$: given $g\in\Sigma^*$,
  enumerate all vertices in $X^{d}$ for $d=c\log|g|+k+m$,
  and return ``true'' precisely when they are all fixed by $g$. The
  algorithm is correct by the previous remarks, and it remains to show
  that it requires logarithmic space.
  The vertices in $X^d$ are traversed by lexicographically enumerating them.
  They can be stored explicitly since their length is bounded by 
  $\Oh(\log \abs g)$.
  Now given a vertex $v \in X^d$, we apply the letters of $g$ to it one after
  the other. Again, this is done by a simple loop requiring
  $\Oh(\log\abs g)$ bits.
  Finally, to apply a generator to $v$, we use the property that all
  its states are generators ($\phi(\Sigma)\subseteq \Sigma^X\times\Sym X$), and traverse $v$ by performing $\abs v$
  lookups in the table storing $(\phi(a))_{a \in \Sigma}$.
\end{proof}

\section{Complexity theory}

We assume that the reader is familiar with the complexity classes \L (deterministic logarithmic space), \P (deterministic polynomial time), and \PSPACE (polynomial space); see e.g.\ \cite{AroBar09} for details. 
With $\polyL$ we denote that union of all classes $\NSPACE(\log^c n)$ for a constant $c$.
Since we also deal with sublinear time complexity classes, we use Turing machines with {\em random access} (this has no influence on the definition of the above classes).
Such a machine has an additional index tape and some special query states. Whenever the Turing machine enters a query state, the following transition depends on the input symbol at the position which is currently written on the index tape in binary notation. 

 We use the abbreviations DTM (deterministic Turing machine), NTM (non-deterministic Turing machine) and ATM (alternating Turing machine).
 An ATM is an NTM together with a partition of the state set into existential and universal states. A configuration is called existential (resp., universal) if the current state
in the configuration is existential (resp., universal). 
An existential configuration is accepting if there exists an
accepting successor configuration, whereas a universal configuration is accepting if all successor 
configurations are accepting. Note that a universal configuration which does not have a successor configuration is accepting,
whereas an existential configuration which does not have a successor configuration is non-accepting.
Finally, an input word is accepted if the corresponding initial configuration is accepted.
An ATM is in \emph{input normal form} if its input alphabet is $\{0,1\}$ and
on any computation path it queries at most one input bit and halts immediately after returning the value of the input bit or its negation (depending on the current state of the Turing machine).
We define the following complexity classes:
\begin{itemize}
\item $\DLINTIME$: the class of languages that can be accepted 
by a DTM in linear time.
\item $\DLOGTIME$: the class of languages that can be accepted 
by a DTM in logarithmic time.
\item $\ALOGTIME$: the class of languages that can be accepted 
by an ATM in logarithmic time. 
\item $\APTIME$: the class of languages that can be accepted 
by an ATM in polynomial time. 
\end{itemize}
If $X$ is one of the above classes, we speak of an $X$-machine with the obvious meaning.
It is well known that $\APTIME = \PSPACE$. Moreover, every language in \ALOGTIME can be recognized by an 
\ALOGTIME-machine in input normal form  \cite[Lemma 2.41]{Vollmer99}.

A $\mathsf{nand}$-machine 
is an NTM in which each configuration has either zero or two successor configurations
and configurations are declared to be accepting, respectively non-accepting,
according to the following rules, where $c$ is a configuration:
\begin{itemize}
\item If $c$ has no successor configurations and the state of $c$ is final (resp., non-final), then $c$ is accepting (resp., non-accepting).
\item If $c$ has two successor configurations and both of them are accepting, then $c$ is not accepting.
\item If $c$ has two successor configurations and at least one them is non-accepting, then $c$ is accepting.
\end{itemize}
Since the boolean functions {\sf and} and {\sf or} can be obtained with {\sf nand}, it follows easily that $\PSPACE$ (resp., \ALOGTIME) coincides with the class
of all languages that can be accepted by a polynomially (resp., logarithmically) time-bounded $\mathsf{nand}$-machine.

For a complexity class $\mathsf{C}$ we denote by $\forall \mathsf{C}$ 
the class of all languages $L$ such that there exists a polynomial $p(n)$
and a language $K \in \mathsf{C}$ such that 
$L = \{ u \mid \forall v \in \{0,1\}^{p(|u|)} : u \# v \in K \}$. We have for instance
$\forall \Ptime = \coNP$ and $\forall \PSPACE = \PSPACE$.
Likewise we define the class $\Mod{m} \mathsf{C}$ by $L \in \Mod{m} \mathsf{C}$ if there exists a polynomial $p(n)$
and a language $K \in \mathsf{C}$ such that 
$L = \big\{ u \mid |\{ v \in \{0,1\}^{p(|u|)} : u \# v \in K\}| \not\equiv 0 \mod m \big\}$.

\subsection{Efficiently computable functions}

A function $f\colon\Gamma^* \to \Sigma^*$ is \DLOGTIME-computable if there is some polynomial $p$ with $\abs{f(x)} \leq p(\abs{x})$ for all $x \in \Gamma^*$ and the set 
$L_f = \{ (x,a,i) \mid x \in \Gamma^* \text{ and the $i$-th letter of $f(x)$ is $a$}\}$
 belongs to $\DLOGTIME$. Here $i$ is a binary coded integer. 
Note that a \DLOGTIME-machine for $L_f$ can first (using binary search) compute the binary coding of $\abs x$ in time $\Oh(\log |x|)$.
Assume that the length of this binary coding is $\ell$.
If $i$ has more than $\ell$ bits, the machine can reject immediately. 
As a consequence of this (and since $\abs{\Sigma}$ is a constant), the running time of a
\DLOGTIME-machine for $L_f$ on input $(x,a,i)$ can be bounded by $\Oh(\log |x|)$ (independently of the actual bit length of $i$).
 We can also assume that the \DLOGTIME-machine outputs the letter $a$ on input of $x$ and $i$. In case
 $i > |x|$ we can assume that the machine outputs a distinguished letter.
A \DLOGTIME-reduction is a \DLOGTIME-computable many-one reduction.
We say that a \DLOGTIME-machine \emph{strongly} computes a function $f\colon\Sigma^* \to \Gamma^*$ with $\abs{f(x)} \leq C \log(\abs{x})$ for all $x \in \Sigma^*$ and for some constant $C$ if it computes the function value by writing it sequentially on a separate output tape (be aware of the subtle difference and that strong \DLOGTIME-computability is not a standard terminology, but is coincides with $\mathsf{FDLOGTIME}$ in \cite{CaussinusMTV98}.)

A $\PSPACE$-transducer is a deterministic Turing-machine with a 
read-only input tape, a write-only output tape 
and a work tape, whose length is polynomially bounded 
in the input length $n$. The output is written sequentially on the output tape.
Moreover, we assume that the transducer terminates for every
input. This implies that a $\PSPACE$-transducer computes a 
mapping $f : \Sigma^* \to \Gamma^*$, where $|f(x)|$ is bounded
by $2^{|x|^{\Oh(1)}}$. We call this mapping \PSPACE-computable.
We need the following simple lemma, see \cite{LohreyM13}:

\begin{lemma} \label{lemmaPSPACE}
Assume that the mapping $f : \Sigma^* \to \Gamma^*$ is \PSPACE-computable and let
$L \subseteq \Gamma^*$ be a language in $\polyL$. Then $f^{-1}(L)$ belongs to $\PSPACE$.
\end{lemma}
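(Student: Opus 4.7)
The plan is to show $f^{-1}(L) \in \NSPACE(|x|^{O(1)}) = \PSPACE$ (using Savitch's theorem at the end) by simulating a polyL-machine for $L$ directly on the implicitly represented string $f(x)$, computing bits of $f(x)$ on demand via the $\PSPACE$-transducer for $f$.

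Concretely, fix a $\PSPACE$-transducer $T$ computing $f$, so there is a constant $d$ with $|f(x)| \leq 2^{|x|^d}$ and $T$ uses space $|x|^{O(1)}$. Fix an $\NSPACE(\log^c m)$ machine $M$ for $L$, where $m$ is $M$'s input length. On input $x$, our $\PSPACE$-machine will simulate $M$ on the virtual input $y = f(x)$. Since $|y| \leq 2^{|x|^d}$, the work tape of $M$ uses at most $\log^c(2^{|x|^d}) = |x|^{cd}$ cells, and the head position on the input tape (a number in $[0..|y|]$) can be stored in $|x|^d + 1$ bits. Thus an entire configuration of $M$ on $y$ fits into $|x|^{O(1)}$ space on our work tape.

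The one non-trivial step is implementing an input-bit query: given a position $i$ with $0 \le i < |y|$ stored in binary on our work tape, we must return $y[i]$. To do so, we invoke a subroutine that simulates $T$ on $x$ and maintains a binary counter $j$ (initialized to $0$) on a separate region of the work tape; each time $T$ writes a symbol to its output tape, we compare the symbol's index to $i$, returning it if $j = i$ and otherwise incrementing $j$ and discarding the output symbol. Both $T$'s work tape and the counter $j$ use only $|x|^{O(1)}$ bits, so this subroutine runs in polynomial space in $|x|$. We also need to compute $|y|$ initially (to detect out-of-range queries and to give $M$ its input length); this is the same subroutine, counting all output symbols without returning any.

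Putting the pieces together, we obtain a nondeterministic Turing machine accepting $f^{-1}(L)$ in space $|x|^{O(1)}$: it guesses $M$'s transitions, storing $M$'s configuration on $y$ explicitly, and whenever $M$ reads the bit under its input head we invoke the above subroutine (reusing the space after the query completes). By Savitch's theorem, $\NPSPACE = \PSPACE$, so $f^{-1}(L) \in \PSPACE$. The only mildly subtle point is ensuring that the subroutine can be called repeatedly from arbitrary points of $M$'s simulation without corrupting $M$'s state; this is handled in the standard way by partitioning the work tape into a region dedicated to $M$'s configuration and a scratch region reused by each call to the bit-extraction subroutine.
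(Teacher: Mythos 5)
Your proof is correct and is exactly the standard composition argument for this lemma; the paper itself gives no proof, merely citing \cite{LohreyM13}, where the same on-demand recomputation of output symbols of the \PSPACE-transducer (plus Savitch's theorem to handle the nondeterminism of the $\polyL$-machine) is used. The space accounting ($\log^c$ of a $2^{|x|^{O(1)}}$-length virtual input is polynomial in $|x|$) is the key point and you have it right.
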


\subsection{Leaf languages}

In the following, we introduce basic concepts
related to leaf languages, more details can be found in  \cite{BoCrSi,Her97,HLSVW93,HVW96,JeMcTh96}.
An NTM $M$ with input alphabet $\Gamma$ is \emph{adequate}, if 
(i) for every input $x \in \Gamma^*$, $M$ does not have an infinite 
computation on input $x$, (ii) the finite set of transition tuples
of $M$ is linearly ordered, and (iii) when terminating $M$ prints a symbol $\alpha(q)$ from a finite alphabet $\Sigma$, where $q$ is 
 the current state of $M$.
For an input $x \in \Gamma^*$, we define the computation tree
by unfolding the configuration
graph of $M$ from the initial configuration. 
By condition (i) and (ii), the computation
tree can be identified with a  
finite ordered tree $T(x) \subseteq \mathbb{N}^*$.
For $u \in T(x)$ let $q(u)$ be the $M$-state of 
the configuration that is associated with the 
tree node $u$.
Then, the leaf string $\leaf(M,x)$
is the string $\alpha(q(v_1)) \cdots \alpha(q(v_k)) \in \Sigma^+$, where 
$v_1,\ldots,v_k$ are all leaves of $T(x)$ listed in lexicographic order.

An adequate NTM $M$ is called  \emph{balanced}, if for every input $x \in \Gamma^*$,
$T(x)$ is a complete binary tree.
With a language $K \subseteq\Sigma^*$ we associate 
the language 
$$
\LEAF(M,K) = \{ x \in  \Gamma^* \mid \leaf(M,x) \in K \}.
$$
Finally, we associate two complexity classes with $K \subseteq\Sigma^*$:
\begin{eqnarray*}
\LEAF(K) &=& \{ \LEAF(M,K) \mid \text{$M$ is an adequate polynomial time
    NTM}\}\\
\bLEAF(K) &=& \{ \LEAF(M,K) \mid \text{$M$ is a balanced polynomial time NTM}\}
\end{eqnarray*}
These classes are closed under polynomial time reductions.
We clearly have $\bLEAF(K)  \subseteq \LEAF(K)$.
The following result was shown in \cite{JeMcTh96} by padding computation trees to complete binary trees.

\begin{lemma}
Assume that $K \subseteq \Sigma^*$ is a language such that $\Sigma$ contains a symbol $1$ with the following property:
if $uv \in K$ for $u,v \in \Sigma^*$ then $u1v \in K$. Then $\LEAF(K)  = \bLEAF(K)$. 
\end{lemma}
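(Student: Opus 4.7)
The inclusion $\bLEAF(K)\subseteq\LEAF(K)$ is trivial, since every balanced NTM is by definition adequate. For the reverse direction, the plan is, given an adequate polynomial-time NTM $M$ witnessing $L=\LEAF(M,K)\in\LEAF(K)$, to construct a balanced polynomial-time NTM $M'$ with $\LEAF(M',K)=L$ by padding the computation tree $T_M(x)$ into a complete binary tree $T_{M'}(x)$, arranging that $\leaf(M',x)$ is obtained from $\leaf(M,x)$ by inserting copies of the distinguished symbol~$1$ at prescribed positions. The hypothesis on $K$, iterated, then preserves $K$-membership under such insertions.

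The construction happens in two stages. First I would \emph{binarize}~$M$: let $b$ be a constant bound on the branching of $M$. Each non-deterministic choice among $k\le b$ successors is replaced by a complete binary decision tree of depth $\lceil\log_2 b\rceil$ whose first $k$ leaves trigger the original $M$-transitions and whose remaining $2^{\lceil\log_2 b\rceil}-k$ leaves immediately halt in a state printing $1$. This yields an NTM $M_1$ with binary branching whose leaf string is $\leaf(M,x)$ with $1$'s inserted. Second, I would \emph{equalize depths}: with $D(|x|)=\lceil\log_2 b\rceil\cdot p(|x|)$ bounding the depth of $T_{M_1}(x)$, whenever a path of $M_1$ would halt at some depth $d<D(|x|)$ in a state printing $\alpha$, the machine $M'$ continues along a complete binary subtree of depth $D(|x|)-d$ in which one distinguished leaf carries the output $\alpha$ and the remaining $2^{D(|x|)-d}-1$ leaves print~$1$. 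Then $T_{M'}(x)$ is a complete binary tree of depth $D(|x|)$, $M'$ runs in polynomial time, and $\leaf(M',x)$ is $\leaf(M,x)$ with $1$'s inserted at specific positions.

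Correctness follows from iterating the hypothesis $uv\in K\Rightarrow u1v\in K$: each inserted $1$ preserves $K$-membership, so $\leaf(M,x)\in K$ if and only if $\leaf(M',x)\in K$, and hence $\LEAF(M',K)=L\in\bLEAF(K)$. The main technical obstacle is the polynomial-time bookkeeping needed for $M'$ to simulate the two padding stages: given its current simulated node, $M'$ must decide whether the node lies inside a binarized decision subtree, inside a depth-equalization subtree, or at an $M$-leaf being extended, and simulate accordingly. Because $D(|x|)$ is polynomial in $|x|$ and each padding subtree has depth at most $D(|x|)$, this can be implemented by augmenting the state with a depth counter and a mode flag, preserving the polynomial running time.
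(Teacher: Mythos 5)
Your construction is exactly the one the paper (following \cite{JeMcTh96}) has in mind: binarize nondeterministic branching, then equalize depths by attaching complete binary subtrees, filling the new leaves with the distinguished symbol $1$; the bookkeeping you sketch is routine and the running time stays polynomial.

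There is, however, a genuine gap in your correctness argument. The stated hypothesis is one-directional: $uv\in K\Rightarrow u1v\in K$. Iterating this gives only $\leaf(M,x)\in K\Rightarrow\leaf(M',x)\in K$, i.e. $\LEAF(M,K)\subseteq\LEAF(M',K)$. For $\LEAF(M',K)=L$ you also need the reverse implication $\leaf(M',x)\in K\Rightarrow\leaf(M,x)\in K$, which amounts to $u1v\in K\Rightarrow uv\in K$ --- the direction the hypothesis does \emph{not} provide. Without it your $M'$ could accept strictly more inputs than $M$. (For a concrete failure of the padding argument take $K=\{w\in\{0,1\}^*:|w|_1\ge1\}$, which does satisfy $uv\in K\Rightarrow u1v\in K$; after your padding every leaf string contains a $1$, so $M'$ accepts everything.) What the argument actually needs is the bidirectional neutral-letter condition $uv\in K\Leftrightarrow u1v\in K$. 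This is what holds in the paper's application, $K=\WP(G,\Sigma)$ with $1\in\Sigma$ the group identity (there $uv=_G1\Leftrightarrow u1v=_G1$), and it is what the cited source uses; the lemma statement in the paper is phrased loosely. You should either state and use the biconditional, or at minimum flag that the ``if\,--\,then'' in the hypothesis must be read as ``iff'' for the proof to close.

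Two small construction points worth tightening: (i) configurations with exactly one successor also need to be split into two children (one real, one padding leaf), otherwise the intermediate tree is not binary --- your uniform ``complete binary tree of depth $\lceil\log_2 b\rceil$'' gadget does handle $k=1$ correctly, but this deserves a sentence; (ii) you should note that a bound $b$ on the branching degree exists because the transition relation of a Turing machine is finite, so the per-step branching is bounded by a constant depending only on $M$.
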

In particular, we obtain the following lemma:

\begin{lemma} \label{lemma-JMT}
Let $G$ be a finitely generated group and $\Sigma$ a finite standard generating set for $G$. 
Then $\LEAF(\WP(G,\Sigma))  = \bLEAF(\WP(G,\Sigma))$. 
\end{lemma}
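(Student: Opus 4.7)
The plan is to obtain this as a direct corollary of the preceding lemma by verifying its padding hypothesis for $K = \WP(G,\Sigma)$. The crucial observation is that the standard generating set was defined in Section~\ref{sec:prelims} precisely to contain a distinguished symbol $1 \in \Sigma$ which is mapped by $\pi\colon \Sigma^* \to G$ to the group identity; as was remarked there, this symbol was included ``for padding'' purposes, and the present lemma is exactly where that foresight pays off.

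The only thing to check is therefore that the symbol $1 \in \Sigma$ satisfies the hypothesis of the previous lemma for $K = \WP(G,\Sigma)$. For this I would take arbitrary $u,v \in \Sigma^*$ with $uv \in \WP(G,\Sigma)$, i.e., $\pi(uv) = 1_G$, and show $u1v \in \WP(G,\Sigma)$. Since $\pi$ is a monoid homomorphism and $\pi(1) = 1_G$, one computes $\pi(u1v) = \pi(u)\pi(1)\pi(v) = \pi(u)\pi(v) = \pi(uv) = 1_G$, so $u1v \in \WP(G,\Sigma)$ as required.

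Having verified the hypothesis, an invocation of the preceding lemma with this $K$ and this padding symbol immediately yields $\LEAF(\WP(G,\Sigma)) = \bLEAF(\WP(G,\Sigma))$. There is no genuine obstacle; the only subtlety is rhetorical, namely to make explicit that the standard generating set already builds in the padding symbol needed, so that no re-encoding of the word problem is required and the leaf-language classes defined by $G$'s word problem behave the same whether one uses adequate or balanced machines.
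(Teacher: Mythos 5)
Your proof is correct and matches the paper's intent exactly: the paper states the lemma as an immediate corollary of the preceding padding lemma, and your observation that $\pi(u1v)=\pi(u)\pi(1)\pi(v)=\pi(uv)$ (so that the distinguished identity symbol $1\in\Sigma$ satisfies the required closure property of $\WP(G,\Sigma)$) is precisely the verification needed. Nothing to add.
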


Moreover, we have:
\begin{lemma} \label{lemma-f.g.set-not-important}
Let $G$ be finitely generated group and $\Sigma$, $\Gamma$ finite standard generating sets for $G$. 
Then $\LEAF(\WP(G,\Sigma))  = \LEAF(\WP(G,\Gamma))$. 
\end{lemma}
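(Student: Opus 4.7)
My plan is to prove both inclusions by a straightforward letter substitution on leaf strings. Since $\Sigma$ and $\Gamma$ both generate $G$, for every $a \in \Sigma$ there is a word $h_a \in \Gamma^*$ with $h_a =_G a$; fix such words once and for all, and let $c = \max_{a\in\Sigma} |h_a|$ be a constant. The idea is that replacing each letter of a $\Sigma$-word by its $\Gamma$-translation preserves the group element, hence membership in the word problem. So the whole proof reduces to showing that we can realise this substitution on the leaf string by a local surgery on the computation tree of an adequate polynomial time NTM.

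Concretely, to prove $\LEAF(\WP(G,\Sigma)) \subseteq \LEAF(\WP(G,\Gamma))$, I start from any language $L = \LEAF(M,\WP(G,\Sigma))$ for an adequate polynomial time NTM $M$ with output alphabet $\Sigma$, and construct an adequate polynomial time NTM $M'$ with output alphabet $\Gamma$ such that $\leaf(M',x)$ equals $\leaf(M,x)$ with every letter $a$ replaced by $h_a$. The construction of $M'$ simulates $M$ verbatim, with one modification: in place of every terminating configuration of $M$ with state $q$ (so a leaf of $T(x)$ carrying label $\alpha(q)$), $M'$ enters a small deterministic-in-structure gadget that branches into $|h_{\alpha(q)}|$ new halting configurations whose labels, read in the prescribed lexicographic order on transitions, spell $h_{\alpha(q)}$. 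This gadget is a finite automaton depending only on $q$ (there are only finitely many possible $q$), and it is built into the transition function of $M'$, with the transition order extending the one of $M$.

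I then verify that $M'$ is adequate (no infinite computations, transitions linearly ordered, terminating states labelled) and polynomial time: its running time is bounded by the running time of $M$ plus $c$ per leaf, hence polynomial. By construction, if $\leaf(M,x) = a_1 \cdots a_n$ then $\leaf(M',x) = h_{a_1} \cdots h_{a_n}$, and since $h_{a_1} \cdots h_{a_n} =_G a_1 \cdots a_n$, the two words agree on membership in the word problem; thus $L = \LEAF(M',\WP(G,\Gamma)) \in \LEAF(\WP(G,\Gamma))$. The reverse inclusion is completely symmetric, swapping the roles of $\Sigma$ and $\Gamma$.

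The only point requiring a little care is the realisation of the gadget inside the NTM formalism, i.e.\ ensuring that the ordering of new transitions produces the letters of $h_{\alpha(q)}$ in lexicographic order at the leaves of the gadget; this is a minor bookkeeping exercise because $q$ ranges over a finite set and each $h_a$ has bounded length, so one may use any right-combing binary gadget (or, equivalently, a single state with $|h_{\alpha(q)}|$ ordered successor halting states). No other delicate issue arises, so this is essentially the whole argument.
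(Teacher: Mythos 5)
Your proof is correct and follows exactly the paper's approach: modify the NTM so that each terminating configuration that would print $a\in\Sigma$ instead enters a small nondeterministic subcomputation emitting a fixed word $h_a\in\Gamma^*$ representing the same group element, and observe this preserves adequacy, polynomial time, and the group element represented by the leaf string. The extra care you devote to ordering the gadget's transitions is a reasonable elaboration of the paper's terser "small nondeterministic subcomputation."
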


\begin{proof}
Consider a language $L \in \LEAF(\WP(G,\Sigma))$. Thus, there exists an adequate polynomial time
 NTM $M$ such that $L = \LEAF(M,\WP(G,\Sigma))$. We modify $M$ as follows: If $M$ terminates and prints
 the symbol $a \in \Sigma$, it enters a small nondeterministic subcomputation that produces the leaf string $w_a$,
 where $w_a \in \Gamma^*$ is a word that evaluates to the same group element as $a$. Let $M'$ be the resulting
 adequate polynomial time NTM.
 It follows that $\LEAF(M,\WP(G,\Sigma)) =  \LEAF(M',\WP(G,\Gamma))$.
\end{proof}
Lemma~\ref{lemma-f.g.set-not-important} allows to omit the standard generating set $\Sigma$ in the notations
$\LEAF(\WP(G,\Sigma))$ and $\bLEAF(\WP(G,\Sigma))$. We will always do that.
In \cite{HLSVW93} it was shown that $\PSPACE = \LEAF(\WP(G))$ for every finite non-solvable group.

\subsection{Circuit complexity}

We define a \emph{polynomial length projection} (or just {\em projection}) as a function $f\colon \oneset{0,1}^* \to \oneset{0,1}^*$ such 
that there is a function $d(n) \in \Oh(\log n)$ with $\abs{f(x)} = \abs{f(y)} = 2^{d(n)}$ for all $x, y$ with $\abs{x} = \abs{y} = n$ and such that each output bit depends on at most one input bit in the following sense: 
For every $n \in \N$, there is a mapping $q_n \colon \{0,1\}^{d(n)} \to \set{\langle j,a,b\rangle}{j \in \ccinterval{1}{n}, a,b,\in \oneset{0,1}}$, where $q_n(i) = \langle j,a,b\rangle$ means that 
for all $x \in \{0,1\}^n$  the $i$-th bit of $f(x)$ is $a$ if the $j$-th bit of $x$ is $1$ and $b$ if it is $0$. Here, we identify $i \in  \{0,1\}^{d(n)}$ with a binary coded number from $\cointerval{0}{2^{d(n)}}$
(so the first position in the output is zero).
We also assume that the input position $j \in \ccinterval{1}{n}$ is coded in binary, i.e., by a bit string of length $\Oh(\log n)$.
Note that the output length $2^{d(n)}$ is polynomial in $n$. Restricting the output length to a power of two (instead of an arbitrary polynomial) is convenient for our purpose but in no way crucial.
Our definition of a projection is the same as in \cite{CaussinusMTV98} except for our restriction on the output length. Moreover,
in  \cite{CaussinusMTV98} projections were defined for arbitrary alphabets.

Let $q \colon \{1\}^* \times \oneset{0,1}^* \to \{0,1\}^* \times \{0,1\} \times \{0,1\}$ with $q(1^n,v) = q_n(v)$. 
We assume that $q(1^n,v)$ is a special dummy symbol if $|v| \neq d(n)$.
We call $q$ the {\em query mapping} associated with the projection $f$.
The projection $f$ is called \emph{uniform} if (i) $1^{d(n)}$  is strongly computable in \DLOGTIME from the string $1^n$, and (ii) $q$ is strongly \DLOGTIME-computable. 
Notice that if a language $K$ is reducible to $L$ via a uniform projection, then $K$ is also \DLOGTIME-reducible to $L$.

We are mainly interested in the circuit complexity class \Nc1.
A language $L \subseteq \oneset{0,1}^*$  is in \Nc1 if it can be recognized by a family of logarithmic depth boolean circuits of bounded fan-in. More precisely, $L \subseteq \oneset{0,1}^*$ belongs to  \Nc1 if there exists a family $(C_n)_{n \geq 0}$ of boolean circuits which, apart from the input gates $x_1, \ldots, x_n$, are built up from {\sf not}-, {\sf and}- and {\sf or}-gates. In the following we also use \nand-gates.
All gates must have bounded fan-in, where the fan-in of a gate is the number of incoming edges of the gate. Without loss of generality, we assume that all {\sf and}-, {\sf or}- and {\sf nand}-gates have 
fan-in two. The circuit $C_n$ must accept exactly the words from $L \cap \oneset{0,1}^n$, i.e., if each input gate $x_i$ receives the input $a_i \in \oneset{0,1}$, then a
distinguished output gate evaluates to $1$ if and only if $a_1 a_2 \cdots a_n \in L$. Finally, the depth (maximal length of a path from an input to the distinguished output) of $C_n$ must grow logarithmically in $n$. In the following, we also consider \DLOGTIME-uniform \Nc1, which is well-known to coincide with \ALOGTIME (see \eg \cite[Corollary 2.52]{Vollmer99}).
\DLOGTIME-uniform means that there is a \DLOGTIME-machine which decides on input of two gate numbers $i$ and $j$ in $C_n$ (given in binary), a binary string $w$, and the string $1^n$ whether, when starting at gate $i$ in $C_n$ and following the path labelled by $w$, we reach gate $j$. Here, following the path labelled by $w$ means that we go to the left (right) input of $i$ if $w$ starts with a $0$ ($1$) 
and so on. Moreover, we require that on input of $i$ in binary and the string $1^n$, the type of the gate $i$ in $C_n$ is computable in \DLOGTIME.
For more details on these definitions we refer to \cite{Vollmer99} (but we will not need the above definition of \DLOGTIME-uniformity).
For a language $L$  over a non-binary alphabet $\Sigma$, one first has to fix a binary 
encoding of the symbols in $\Sigma$. For  membership in \Nc1 the concrete encoding is irrelevant. However, we still assume that all letters of $\Sigma$ are encoded using the same number of bits.

The class \Ac{0} is defined as the class of languages (respectively functions) accepted (respectively computed) by circuits of constant depth and polynomial size with {\sf not}-gates and
unbounded fan-in {\sf and}- and {\sf or}-gates.

We will also work with a very restricted class of circuit families, where every circuit is a complete binary tree of {\sf nand}-gates.
For such a circuit, all the information is given by the labelling function for the input gates.

 \begin{definition} \label{nand-tree-circuit}
	A  \emph{family of balanced {\sf nand}-tree-circuits of logarithmic depth} $(C_n)_{n\in \N}$ is given by a mapping $d(n) \in \Oh(\log n)$ and a query mapping 
	$q \colon \{1\}^* \times \oneset{0,1}^* \to \{0,1\}^* \times \{0,1\} \times \{0,1\}$, which defines a projection $f$ mapping bit strings of length $n$ to 
	bit strings of length $2^{d(n)}$. The corresponding circuit $C_n$ for input length $n$ is then obtained by taking $\{0,1\}^{\le d(n)}$ as the set of gates.
	Every gate $v \in \{0,1\}^{< d(n)}$ computes the \nand\ of $v0$ and $v1$.
 	If $x \in \{0,1\}^n$ is the input string for $C_n$ and $f(x) = a_1 a_2 \cdots a_{2^{d(n)}}$,
	 then the $i$-th leaf $v \in \{0,1\}^{d(n)}$ (in lexicographic order) is set to $a_i$.
\end{definition}

\begin{lemma}\label{lem:layeredNAND}
 	For every $L$ in (non-uniform) \Nc1 there is a (non-uniform) family of balanced {\sf nand}-tree-circuits of logarithmic depth. 
 \end{lemma}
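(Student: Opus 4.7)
The plan is to transform a given \Nc1 family $(D_n)_{n\ge 0}$ of bounded fan-in boolean circuits of depth $\Oh(\log n)$ into balanced \nand-tree circuits in three stages: convert all gates to \nand-gates, unfold the DAG into a tree, and pad to a complete binary tree whose leaves are literals or constants.

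First, I would replace each {\sf and}-, {\sf or}- and {\sf not}-gate by its standard constant-depth \nand-equivalent (\eg $\neg x = x \nand x$, $x \wedge y = (x \nand y)\nand(x \nand y)$, $x \vee y = (x \nand x) \nand (y \nand y)$), yielding a fan-in-$2$ \nand-circuit of depth $d = \Oh(\log n)$. Next, I would unfold this DAG into a tree by duplicating shared subcircuits; the depth is preserved, and since $d = \Oh(\log n)$ the tree has at most $2^d = \mathrm{poly}(n)$ leaves, each of which carries either an input variable $x_j$, a negated input variable $\neg x_j$, or a constant in $\oneset{0,1}$.

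The main step is to balance this tree into a complete binary \nand-tree of uniform depth $D := d$. An easy induction shows that a complete \nand-tree of height $k$ all of whose leaves carry the same value $w$ evaluates to $w$ if $k$ is even and to $\neg w$ if $k$ is odd. Hence, for each leaf at depth $D' < D$ carrying the literal $\ell$, I replace it by a complete \nand-subtree of height $D - D'$ whose leaves are uniformly labeled $\ell$ if $D-D'$ is even and $\neg \ell$ if $D-D'$ is odd; the subtree then evaluates to $\ell$, so the whole circuit still computes the correct function. Since $\neg \ell$ is again a literal or a constant, all leaves of the padded tree remain literals over $\oneset{x_1,\dots,x_n} \cup \oneset{0,1}$. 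The one subtle point, the parity of the padding length, is absorbed by this even/odd trick.

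The resulting circuit $C_n$ is thus a complete binary \nand-tree of depth $D = \Oh(\log n)$, and the map sending an input $x \in \oneset{0,1}^n$ to the string of its $2^D$ leaf values (in lexicographic order) is a polynomial length projection in the sense of the paper: each output bit depends on at most one input bit, determined by the literal attached to the corresponding leaf. Encoding this as a query mapping $q_n(i) = \langle j, a, b\rangle$---with $j$ the index of the input variable used at leaf $i$ (or any dummy index for constant leaves) and $a, b$ the two possible leaf values---gives the required family of balanced \nand-tree circuits of logarithmic depth. Since the lemma is stated for non-uniform \Nc1, no computability conditions on $q$ need be verified, so no genuine obstacle arises beyond the parity bookkeeping already handled above.
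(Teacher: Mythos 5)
Your proof is correct and takes essentially the same route as the paper's: simulate all gates by \nand-gates, unfold the DAG into a tree by duplicating shared gates, and pad each short branch with a complete \nand-subtree whose leaves carry the literal or its negation according to the parity of the padding height. The paper's proof is simply a terser version of this argument, including the same even/odd labelling trick for the query mapping.
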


 \begin{proof}
 The proof is  straightforward: clearly, {\sf or}, {\sf and}, and {\sf not} gates can be simulated by {\sf nand} gates. Now take the circuit $C_n$ for input length $n$. We first unfold $C_n$ into a tree by duplicating gates with multiple outputs.
 Since $C_n$ has constant fan-in and logarithmic depth, the resulting tree has still polynomial size (and logarithmic depth).
 To transform this tree into a complete binary tree, we replace leafs by complete binary subtrees. If we replace a leaf labelled with $x_i$ by a subtree
 of even (resp. odd) height, then we label all leafs of the subtree with $\langle i, 1, 0 \rangle$ (resp., $\langle i, 0, 1 \rangle$). This labelling defines the query mapping $q$ in the natural way.
 \end{proof}

 \begin{lemma}\label{lem:layeredNANDuniform}
 	For every $L$ in \ALOGTIME  there is a 
 	family $\mathcal{C} = (C_n)_{n \ge 0}$ of balanced {\sf nand}-tree-circuits of logarithmic depth such that the mapping $1^n \mapsto 1^{d(n)}$ 
	and the query mapping $q$ from \cref{nand-tree-circuit} can be strongly computed in \DLOGTIME.
 \end{lemma}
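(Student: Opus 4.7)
The plan is a uniform refinement of the construction in \cref{lem:layeredNAND}. I would start from the equivalence $\ALOGTIME = \DLOGTIME$-uniform $\Nc1$ and work with a logarithmic-time \nand-machine $N$ accepting $L$, placed in input normal form so that each computation path ends by querying at most one input bit. Such an $N$ runs in time $c\log n$ for some fixed constant $c$. Setting $d(n) = c\log n$, the mapping $1^n \mapsto 1^{d(n)}$ is strongly \DLOGTIME-computable: binary-search the input to obtain $|1^n|$, then output $c\log n$ ones sequentially.

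I would next pad the computation tree of $N$ on inputs of length $n$ to a complete binary tree of depth $d(n)$. Any path that halts before depth $d(n)$ gets extended by a small subtree of \nand-gates whose value is a constant equal to the halting outcome of that path, using $\nand(1,1)=0$ and $\nand(0,0)=1$ to build the subtree alternately and labelling the leaves of the padding by the appropriate constants. The result is a balanced \nand-tree-circuit in the sense of \cref{nand-tree-circuit}. Because $N$ is in input normal form, the sequence of state transitions and nondeterministic branchings along any path $v\in\{0,1\}^{d(n)}$ is \emph{input-oblivious}: it depends only on $n$ and $v$, not on the input bits. Hence the leaf at address $v$, together with the triple $\langle j,a,b\rangle$ describing which input bit is queried and what is returned for each value of that bit, is a function of $(1^n, v)$ alone.

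The crux is to show that this function is strongly \DLOGTIME-computable. A naive step-by-step simulation of $N$ along $v$ would take $\Theta(\log^2 n)$ time, which exceeds the \DLOGTIME budget. To circumvent this I would appeal to the direct-connection characterization of \DLOGTIME-uniform \Nc1: starting from a uniform \Nc1 circuit family for $L$ with gates encoded so that the bit-structure of each gate name mirrors the path from the output gate, the leaf sitting at tree-address $v$ of the unfolded-and-padded circuit receives a name that can be read off from $v$ in \DLOGTIME, and its input-bit index and polarity are then obtained by a single query to the \DLOGTIME direct-connection language. This yields the required strong \DLOGTIME algorithm for $q$ and completes the uniform construction. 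The main obstacle will be setting up the normal form of $N$ (equivalently, of the uniform \Nc1 circuit) in such a way that this direct-address property holds while the \nand-pads introduced to balance the tree remain locally describable in \DLOGTIME; this hinges on choosing a gate-naming convention compatible with both the unfolding and the padding.
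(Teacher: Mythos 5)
Your high-level setup matches the paper's: a logarithmic-time alternating/\nand-machine in input normal form, $d(n) = c\log n$ with $1^{d(n)}$ obtained by binary search on the input length, and padding of the computation tree to a complete binary tree. Where you diverge is the claim that direct step-by-step simulation along a path $v$ costs $\Theta(\log^2 n)$. That concern is unfounded. The machine runs for $O(\log n)$ steps; its worktape and index tape contents can be mirrored on dedicated worktapes of the simulator with heads kept in lockstep, so each simulated step is $O(1)$ work. The only apparent expense is fetching the next bit of $v$ to decide which branch to follow, but those bits sit at \emph{consecutive} input positions, so the simulator keeps a binary counter on its own index tape and increments it once per step; amortized over $O(\log n)$ increments this is $O(\log n)$ total, not $O(\log n)$ per access. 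The whole trace therefore fits in \DLOGTIME, and the paper's proof does exactly this: it simulates $M$ directly along the path $v$, and at the end reads the queried input position off $M$'s (mirrored) index tape to emit $\langle i,a,b\rangle$.

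The detour you propose instead, through a \DLOGTIME-uniform $\Nc1$ circuit and its direct-connection language, leaves a genuine gap. The direct-connection language is a \emph{decision} predicate: given gate numbers $i,j$, a path $w$, and $1^n$, it says whether $w$ leads from $i$ to $j$. To recover the query triple at leaf $v$ you must determine \emph{which} gate lies at the end of the path $v$ from the output gate, and that is not obtained by a single query; the decision predicate alone forces a search over polynomially many candidates $j$. Your proposed repair, renaming gates so the gate number literally encodes the path from the root, amounts to unfolding the circuit into a tree; but then asking for the \emph{type} of the node named $v$ (which input bit it reads, with which polarity) is precisely the function $q$ you are trying to compute, so the argument is circular unless you re-introduce the ATM simulation you were trying to avoid. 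Since that simulation is both necessary in spirit and already within budget, the circuit detour buys nothing; drop it and carry out the direct simulation.
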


\begin{proof}[Proof sketch]
We start with an \ALOGTIME-machine $M$ for $L$ and construct a circuit family with the required properties. 
We can assume that $M$ works in two stages: first it computes the binary coding of the input length in \DLOGTIME (using binary search). The second stage performs the actual computation. We can assume that the second stage is in input normal form \cite[Lemma 2.41]{Vollmer99} meaning that each computation path queries exactly one input position $i$ and halts immediately after querying that position
(returning a bit that is determined by the $i$-th bit of the input). Furthermore, we can assume that the computation tree of the second stage of $M$ is a complete binary tree. For this we enforce all computation paths to be of the same length. Note that
the running time of the second stage of $M$ can be bounded by $c \cdot |u|$, where $c$ is a fixed constant and $u$ is the binary coding of the input length which has been computed before.
Hence, the second stage of the machine makes in parallel to the actual computation $c$ runs over $u$. 
Finally, we also assume that there is an alternation in every step (this can be ensured as in the transformation of an arbitrary \Nc1-circuit into a balanced {\sf nand}-tree-circuit) and that the initial state is existential. The computation tree gives a tree-shaped circuit in a natural way (for details see \cite[Theorem 2.48]{Vollmer99}).  
The depth of this tree is $d \coloneqq c \cdot |u|$ (whose unary encoding is strongly computable in \DLOGTIME by the above arguments). 
Since we start with an existential state and there is an alternation in every step, the resulting circuit uses only {\sf nand}-gates 
(recall that $x$ {\sf nand} $y$ = ({\sf not} $x$) {\sf or} ({\sf not} $y$)).  The fact that every computation path queries only one input position yields the query function $q$ from Definition~\ref{nand-tree-circuit}.
More precisely, let $v \in \{0,1\}^d$ be an input gate of the balanced {\sf nand}-tree-circuit. 
Then $v$ determines a unique computation path of $M$. We simulate
$M$ in \DLOGTIME along this path and output the triple $\langle i, a,b\rangle$ if $M$ queries the $i$-th position of the input string (note that the binary coding of $i$
must be on the query tape of $M$) and outputs $a$ (resp., $b$) if the $i$-th input bit is $1$ (resp., $0$). 
\end{proof}

\subsubsection{\boldmath $G$-programs}

For infinite groups we have to adapt Barrington's notion of a $G$-program slightly. Our notation follows \cite{Vollmer99}.
\begin{definition}
	Let $G$ be a group with the finite standard generating set $\Sigma$. Recall our assumption that $1 \in \Sigma$. A $(G,\Sigma)$-program $P$ of length $m$ and input length $n$
	is a sequence of \emph{instructions} $\langle i_j, b_j, c_j \rangle$ for $0 \leq j \leq m-1$ where $i_j \in \ccinterval{1}{n}$ and $b_j,c_j \in \Sigma$.
	On input of a word $x = a_1 \cdots a_n\in\oneset{0,1}^*$, an instruction $\langle i_j, b_j, c_j \rangle$ evaluates to $b_j$ if $a_{i_j} = 1$ and to $c_j$ otherwise. The evaluation of a $(G,\Sigma)$-program is the product (in the specified order) of the evaluations of its instructions, and is denoted with $P[x] \in \Sigma^*$.

	A family $\cP = (P_n)_{n\in \N}$ of $(G,\Sigma)$-programs, where $P_n$ has input length $n$, defines a function $f_\cP\colon \oneset{0,1}^* \to G$: $f_\cP(x)$ is the group element
	represented by $P_{|x|}[x]$.
	 The language $L$ \emph{accepted} by the family of $(G,\Sigma)$-programs is the set of words $x \in \oneset{0,1}^*$ such that $f_\cP(x) = 1$ in $G$.
	For brevity, we also speak of a \emph{family of $G$-programs} instead of $(G,\Sigma)$-programs with the understanding that there is some finite standard generating set $\Sigma$ which is shared by all programs of the family.
\end{definition}
Notice two differences compared with the original definition: firstly, we fix the finite alphabet $\Sigma$, and secondly, for the accepted language we only take the preimage of $1$ instead of a finite set of final states.  The latter is more restrictive, but for the purpose of $\Nc1$-hardness causes no difference.

A family $\cP = (P_n)_{n\in \N}$ of $(G,\Sigma)$-programs is called \emph{uniform} if the length of $P_n$ is $2^{d(n)}$ for some function $d(n) \in \Oh(\log n)$, the 
mapping $1^n \mapsto 1^{d(n)}$  is strongly computable in \DLOGTIME, and the mapping that assigns to 
$1^n$ and $j \in \{0,1\}^{d(n)}$ (the latter is interpreted
as a binary coded number) the instruction $\langle i_j, b_j, c_j \rangle$ of the $n$-input program $P_n$ is 
strongly computable in \DLOGTIME. Notice that $i_j$ requires $\log n$ bits and $b_j$, $c_j$ require only 
a constant number of bits~--- thus, the tuple $\langle i_j, b_j, c_j \rangle$ can be written down in \DLOGTIME. 
Be aware that here we slightly differ from~\cite[Definition~4.42]{Vollmer99}.

\begin{remark}\label{rem:Gprogramprojection}
If a language $L$ is accepted by a family of polynomially length-bounded $(G,\Sigma)$-programs (by padding one can enforce the length to be of the form $2^{d(n)}$),
then $L$ is reducible via projections to $\WP(G)$~-- and, thus, also via \Ac0-many-one reductions.
This can be seen as follows: encode every letter in $\Sigma$ by a word over $\oneset{0,1}$ of some  fixed constant length.
Then the map assigning the evaluation of the $(G,\Sigma)$-program to an input word is a uniform projection since the output at every position depends on only one input bit.
A similar statement holds in the uniform case (uniformity follows immediately from the definition): if $L$ is accepted by a uniform family of $(G,\Sigma)$-programs, then $L$ is reducible via uniform projections to $\WP(G)$.
\end{remark}
 
\section{Efficiently non-solvable groups}\label{sec:SENS}

We now define the central group theoretic property that allows us to carry out a Barrington style construction:
\begin{definition}\label{def:SENS}		
  We call a group $G$ with the finite standard generating set $\Sigma$
  \emph{strongly efficiently non-solvable} \emph{(SENS)} if for every
  $d \in \N$ there is a collection of $2^{d+1}-1$ elements
  $g_{d,v} \in \Sigma^*$ for $v \in \oneset{0,1}^{\leq d}$ such that
  \begin{enumerate}[(a)]
  \item\label{SENSa} there is some constant $\mu \in \N$ with $\abs{g_{d,v}} = 2^{\mu d}$ for all $v \in \oneset{0,1}^{d}$,
  \item\label{SENSb} $g_{d,v} =	\bigl[ g_{d,v0},\,  g_{d,v1} \bigr]$ for all $v \in \oneset{0,1}^{< d}$ (here we take the commutator of words),
  \item\label{SENSc} $g_{d,\eps} \neq 1$ in $G$.
  \end{enumerate}
   The group $G$ is called \emph{uniformly strongly efficiently
    non-solvable} if, moreover,
  \begin{enumerate}[(a)]
    \setcounter{enumi}{3}
  \item\label{SENSu} given $v\in \oneset{0,1}^d$, a number $i$ encoded in binary with $\mu d$ bits, and $a \in \Sigma$ one can decide in $\DLINTIME$ whether the
  	$i$-th letter of $g_{d,v}$ is $a$.
  \end{enumerate} 
  If $Q=H/K$ is a subquotient of $G$, we call $Q$ \emph{SENS in $G$}
  if $G$ satisfies the conditions of a SENS group, all
  $g_{d,v}$ evaluate to elements of $H$, and $g_{d,\eps}\not\in K$. This definition is already interesting for $K=1$.
\end{definition}
Here are some simple observations:
\begin{itemize}
\item A strongly efficiently non-solvable group clearly cannot
be solvable, so the above terminology makes sense.
\item If one can find suitable $g_{d,v}$ of length at most $2^{\mu d}$,  then these words can always be padded to length $2^{\mu d}$ thanks to the padding letter $1$.
\item  It suffices to specify $g_{d,v}$ for $v \in \oneset{0,1}^{d}$;
  the other $g_{d,v}$ are then defined by Condition~(\ref{SENSb}).
 \item We have $\abs{g_{d,v}} = 2^{\mu d+ 2(d-\abs{v})}$ for all $v \in \oneset{0,1}^{\leq d}$. Thus, all $g_{d,v}$ have length $2^{\Oh(d)}$.
 \item Equivalently to Condition~(\ref{SENSu}), we can require that given $v\in \oneset{0,1}^d$ and a binary encoded number $i$ with $\mu d$ bits, one can compute the $i$-th letter of $g_{d,v}$ in $\DLINTIME$.
\end{itemize}
Henceforth, whenever $d$ is clear, we simply write $g_v$ instead of $g_{d,v}$.

We can formulate a weaker condition than being strongly efficiently
non-solvable which is sufficient for our purposes, but  slightly more
complicated to state:
\begin{definition}\label{def:ENS}
  We say $G$ is \emph{efficiently non-solvable} \emph{(ENS)} if there
  is an even constant $l$ such that for every $d \in \N$, there is a
  collection of elements
  $(g_{d,v})_{v\in \ccinterval{1}{l}^{\le
      d}}$ such that
  \begin{enumerate}[(a)]
  \item $\abs{g_{d,v}} \in 2^{\Oh(d)}$ when $\abs v=d$,
  \item $g_{d,v}=[ g_{d,v1},\,  g_{d,v2} ] \cdots [ g_{d,v (l-1)},\,  g_{d,v l} ]$ when $\abs v<d$,
  \item $g_{d,\eps} \neq 1$ in $G$.
  \end{enumerate}
  Analogously to \cref{def:SENS}, we
  define a group $G$ to be \emph{uniformly efficiently non-solvable}
  if the letters of $g_{d,v}$ for $\abs v =d$ can be computed in
  $\DLINTIME$, and a subquotient $Q=H/K$ of $G$ to be (uniformly) efficiently non-solvable in $G$ if the $g_{d,v}$ evaluate to elements of $H$ with $g_{d,\eps}\not\in K$.
\end{definition}
In the non-uniform setting, we can show that the ENS property is equivalent to the SENS property.

\begin{lemma}\label{lem:ENSimpliesSENS}
	If $G$ is ENS, then $G$ is SENS.
\end{lemma}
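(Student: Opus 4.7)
The plan is to convert ENS data, whose internal tree nodes are products of $l/2$ commutators, into SENS data, whose internal nodes are single commutators. A first observation is that ENS with constant $l=2$ essentially coincides with SENS: after renaming the branching alphabet $\{1,2\}$ to $\{0,1\}$ and padding every leaf to uniform length $2^{\mu d}$ using the identity letter $1\in\Sigma$, ENS data with $l=2$ satisfies the SENS conditions. So the task reduces to showing that ENS with an arbitrary even constant $l$ implies ENS with $l=2$.

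I would proceed by induction on $d$, strengthening the inductive hypothesis to track both the existence of a non-trivial SENS root and the word length of the leaves witnessing it. The central structural fact is that the $d$-th derived subgroup $G^{(d)}$ is generated by the set $S_d$ of depth-$d$ nested single commutators of elements of $G$; this is proved by iterating the identity $[ab,c]=[a,c]^b\,[b,c]$, which rewrites a commutator of products as a product of conjugates of commutators, together with the easy observation that conjugation preserves the nested-commutator property. Consequently $S_d\neq\{1\}$ if and only if $G^{(d)}\neq 1$. Since ENS yields $G^{(d)}\neq 1$ for every $d$, SENS data exists at every depth set-theoretically. To make the length bound quantitative, I would use the ENS tree at depth $D=O(d)$: the nodes of this tree at depth $D-d$ lie in $G^{(d)}$ and have word length $2^{O(d)}$ by the ENS length bound. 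By selecting at each level of a new binary tree an appropriate pair of such nodes whose commutator is non-trivial — which is possible because, by the generation result above, not all pairs of $S_{d-k}$-elements drawn from the ENS substructure can commute if $G^{(d-k+1)}\neq 1$ — one inductively builds the required SENS tree of depth $d$, padding each leaf to the exact length $2^{\mu d}$ with the identity letter.

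The hard part will be making the length control rigorous. The set-theoretic equivalence $S_d\neq\{1\}\iff G^{(d)}\neq 1$ by itself gives no bound on leaf length, and the commutator expansion identity produces a blow-up in word length that must be controlled across all $d$ levels of recursion. The construction must choose SENS leaves directly inside the already short ENS tree rather than appeal to the non-constructive generation argument for $G^{(d)}$, and must argue at each level that among the candidate ENS-subtree elements some pair fails to commute, so that the resulting $d$-fold nested single commutator is still non-trivial. Assuming this selection can be carried out with only a constant blow-up per level — which ensures the final leaves remain of length $2^{O(d)}$ — the padding to length $2^{\mu d}$ for a suitable $\mu$ completes the construction.
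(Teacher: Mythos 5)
Your overall strategy --- reduce to the case $l=2$ by converting products of commutators into single commutators --- is the right instinct, and you correctly identify the relevant identity $[ab,c]=[a,c]^b\,[b,c]$. But you only use it to prove the abstract fact that $G^{(d)}$ is generated by depth-$d$ single nested commutators, and then try to finish by a level-by-level ``selection'' of non-commuting pairs inside the ENS tree. That selection step is where the proof breaks, for two reasons beyond the length control you already flag as unresolved. First, a SENS witness must satisfy the exact word identity $g_{d,v}=[g_{d,v0},g_{d,v1}]$ at every internal node; the internal nodes of the ENS tree are products of $l/2$ commutators of their children, so they cannot serve as SENS nodes, and no ``appropriate pair of such nodes'' makes that identity hold. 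Second, even reading your construction bottom-up (choose leaves, let the internal nodes be defined by the commutator identity), the observation that at each level some pair of ENS nodes fails to commute does not compose: $[a,b]\neq 1$ and $[c,d]\neq 1$ do not imply $[[a,b],[c,d]]\neq 1$, so non-triviality of the depth-$d$ nested commutator at the root is never established. The set-theoretic statement $S_d\neq\{1\}\iff G^{(d)}\neq 1$ is correct but, as you note yourself, gives no control over which leaves witness it.

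The paper's proof avoids all of this by applying the expansion identity directly to the non-trivial element $g_{d,\eps}$: iterating $[x,zy]=[x,y][x,z]^y$ rewrites $g_{d,\eps}$ as a product of conjugated balanced single nested commutators, the conjugations are pushed down to the leaves via $[x,z]^y=[x^y,z^y]$, and since the product is non-trivial some factor is non-trivial. That factor is a complete SENS witness in one stroke: condition (b) holds by construction, condition (c) because the factor is non-trivial, and the length condition because each leaf is an ENS leaf conjugated by products of ENS nodes, each of length $2^{\Oh(d)}$, accumulated over $d$ levels, hence still of length $2^{\Oh(d)}$ and paddable to exactly $2^{\mu d}$ with the letter $1$. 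To repair your argument you would need to replace the selection step by this explicit expansion of $g_{d,\eps}$ and the extraction of a single non-trivial factor.
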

Notice that we are not aware whether an analogous statement for the uniform setting holds as well. 
\begin{proof}
Let $(g_{d,v})_{v\in \ccinterval{1}{l})^{\le
		d}}$ be as in \cref{def:ENS}, meaning that, in particular, $g_{d,\eps} \neq 1$ in $G$. We can think of $g_{d,\eps}$ as a word over the $g_{d,v}$ for $\abs{v} = d$.
	Using the identity $[x, zy] = [x,y] [x,z]^y$, we can rewrite $g_{d,\eps}$ as a product of balanced nested commutators (where no product appears inside any commutator). Since $[x,z]^y = [x^y,z^y]$ we can pull all conjugations inside the commutators.
	 Since the product is non-trivial, at least one of its factors is non-trivial. This factor is a balanced nested commutator of depth $d$, hence, witnessing that $G$ is SENS.
\end{proof}
We continue with some simple observations. In all cases, we only provide proofs for the SENS class. As observed
above, this is no restriction in the non-uniform setting. In the uniform setting one can use analogous arguments for the ENS class.

\begin{lemma}\label{lem:indepgens}
  The property of being (uniformly) (S)ENS is independent of the
  choice of the standard generating set.
\end{lemma}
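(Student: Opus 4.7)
The plan is to transport a SENS-witness family from $\Sigma$ to $\Gamma$ by substituting each letter of $\Sigma$ by a fixed $\Gamma$-word representing it in $G$, and then padding with $1$'s to recover the exact length requirement. Concretely, I fix for each $a\in\Sigma$ a word $w_a\in\Gamma^{*}$ with $w_a =_G a$, chosen compatibly with the involution (i.e., $w_1=1$, and $w_{a^{-1}}=(w_a)^{-1}$ as words, which is possible because $\Gamma$ is closed under inverses); after padding each $w_a$ with trailing $1$'s, I may assume that they all have a common length $C=2^c$, a power of two. Set $\mu'=\mu+c$.

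For $d\ge 1$ and each leaf $v\in\{0,1\}^d$, let $g'_{d,v}$ be the word obtained from $g_{d,v}$ by replacing every letter $a$ with $w_a$; this has length $C\cdot 2^{\mu d}=2^{\mu d+c}\le 2^{\mu' d}$, so I pad it on the right with $1$'s to length exactly $2^{\mu' d}$. For internal nodes $|v|<d$, define $g'_{d,v}=[g'_{d,v0},\,g'_{d,v1}]$ as a word over $\Gamma$. For $d=0$ I just pick any single non-trivial letter of $\Gamma$, which exists since $G$ itself is non-trivial. Condition~(a) holds by construction, and (b) holds by definition at internal nodes and is vacuous at leaves. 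Condition~(c) follows by a routine induction on $d-|v|$: at leaves, substitution and padding preserve the represented group element, and at internal nodes the equality $g'_{d,v}=_G g_{d,v}$ propagates because the choice $w_{a^{-1}}=(w_a)^{-1}$ ensures that letter-substitution commutes with formal inversion, and hence with the commutator bracket. In particular $g'_{d,\eps}=_G g_{d,\eps}\neq 1$.

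For the uniform case, on input $(v,i,a)$ with $|v|=d$ and $i$ a binary number with $\mu' d$ bits, I first inspect the top $\mu' d-(\mu d+c)=c(d-1)$ bits of $i$; if any is nonzero then position $i$ lies in the padding region and the letter is $1$. Otherwise the remaining $\mu d+c$ bits split into a $\mu d$-bit prefix $k$ and a $c$-bit suffix $r$; I invoke the given $\DLINTIME$ procedure for $g_{d,v}$ (once for each of the constantly many $b\in\Sigma$) to determine $b=g_{d,v}[k]$, and then read off the $r$-th letter of $w_b$ by a constant-size table lookup. All of this runs in $O(d)$ time, so condition~(d) is preserved. The ENS case is treated by the very same substitute-and-pad recipe, merely adjusting the recursion at internal nodes to a product of $l/2$ commutators. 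The only subtlety, and the point where care is really needed, is the bookkeeping of bit-lengths so that the output length is exactly a power of two; this is arranged up front by choosing $C$ itself to be a power of two.
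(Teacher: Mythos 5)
Your proof is correct and follows essentially the same route as the paper's: replace each letter of $\Sigma$ by a fixed $\Gamma$-word of common power-of-two length (padding with $1$'s), and recover the $i$-th letter in $\DLINTIME$ by splitting $i$ into a quotient fed to the original procedure and a remainder used for a table lookup. Your extra bookkeeping (re-padding leaves to length exactly $2^{(\mu+c)d}$ and treating $d=0$ separately) just makes explicit what the paper delegates to its earlier remark that words of length at most $2^{\mu d}$ can always be padded to that exact length.
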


\begin{proof}
  Let $\Sigma'$ be another standard generating set. Then, for some constant
  integer $k$, every element of $\Sigma$ may be written (thanks to the
  padding letter $1$)  as a word of length $2^k$ in $\Sigma'$. In particular, if
  $g_{d,v}$ has length $2^{\mu d}$ with respect to $\Sigma$, then it has length $2^{k + \mu d}$ with respect to $\Sigma'$. 
  There is also a simple \DLINTIME-algorithm for computing the $i$-th letter of $g_{d,v}\in(\Sigma')^*$: 
   given $v$, and $i$, it
  runs the \DLINTIME-algorithm for $\Sigma$ on input $v$ and $\floor{i/ 2^k}$, obtaining a
  letter $\sigma\in\Sigma$. Then, it looks up the length-$2^k$
  representation of $\sigma$ over $\Sigma'$, and extracts the
  $(i\bmod 2^k)$-th letter of that representation.
\end{proof}
Later (Example~\ref{ex:nonENS2}) we will give an example of a f.g.~non-ENS group $H$ which is 
 uniformly SENS in a group $G$. 

\begin{lemma}\label{lem:subquotient}
  If $Q = H/K$ is a finitely generated subquotient of a finitely generated group $G$ and $Q$ is
  (uniformly) (S)ENS, then $G$ is also (uniformly) (S)ENS. 
\end{lemma}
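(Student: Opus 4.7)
The strategy is to \emph{lift} the SENS-witnesses from the subquotient $Q=H/K$ to the ambient group $G$ by substituting each generator of $Q$ with a fixed word representing it in $G$. Concretely, fix a finite standard generating set $\Sigma_Q$ of $Q$ and a finite standard generating set $\Sigma_G$ of $G$. For each $a\in\Sigma_Q$, pick an element of $H\le G$ that maps to $a$ under the quotient $H\twoheadrightarrow Q$, and write it as a word $w_a\in\Sigma_G^*$; by padding with the letter $1$ we may assume all $w_a$ have the same length $c=2^k$. Given the SENS-witnesses $(g_{d,v})_{v\in\{0,1\}^{\le d}}$ for $Q$ with parameter $\mu$, define $\hat g_{d,v}\in\Sigma_G^*$ for $v\in\{0,1\}^{d}$ by replacing each letter $a$ of $g_{d,v}$ by $w_a$ and each $a^{-1}$ by $w_a^{-1}$; since substitution is a monoid homomorphism compatible with inversion, the word commutator identity (\ref{SENSb}) is automatically preserved when we propagate the definition to shorter $v$ by $\hat g_{d,v} = [\hat g_{d,v0},\hat g_{d,v1}]$.

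For condition (\ref{SENSc}), observe that $\hat g_{d,\eps}$ evaluates in $G$ to an element of $H$ whose image in $Q$ is precisely the value of $g_{d,\eps}$ in $Q$, which is non-trivial by assumption on $Q$; hence this element lies outside $K$ and, in particular, is not $1_G$. For the length condition (\ref{SENSa}), substitution multiplies lengths by $c=2^k$, so for $|v|=d$ we have $|\hat g_{d,v}|=2^{\mu d+k}$. Setting $\mu'=\mu+k$ gives $\mu' d\ge \mu d+k$ for all $d\ge 1$, so we may pad each $\hat g_{d,v}$ (for $|v|=d$) on the right with $1$'s to bring it to length exactly $2^{\mu' d}$ without changing its $G$-value, and then define the shorter words by the commutator recursion. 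The case $d=0$ is handled separately by choosing $g_{0,\eps}$ to be any fixed non-trivial generator of $G$. This establishes that $G$ is SENS.

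For the uniform version, the task is, given $v\in\{0,1\}^d$, a binary index $i$ of length $\mu'd$ and $a\in\Sigma_G$, to decide in $\DLINTIME$ whether the $i$-th letter of the padded $\hat g_{d,v}$ is $a$. Write $i=qc+r$ with $0\le r<c$; dividing by $c=2^k$ is just a bit-shift and takes $\Oh(d)$ time. If $q\ge 2^{\mu d}$ the answer is read off from the padding (it is the letter $1$); otherwise we invoke the uniform SENS-algorithm for $Q$ on input $(v,q)$ to obtain the $q$-th letter $b\in\Sigma_Q$ of $g_{d,v}$ in time $\Oh(d)$, and then look up the $r$-th letter of $w_b$ in constant time from the finite table $\{w_a\mid a\in\Sigma_Q\}$. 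The total time is $\Oh(d)$, matching the input length. Thus $G$ is uniformly SENS.

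The main (and essentially only) subtlety is the length bookkeeping: substituting fixed-length words scales all leaf-level lengths by the factor $c$, which is \emph{additive} in the exponent rather than multiplicative, so we must choose a slightly larger exponent $\mu'>\mu$ and pad in order to restore the exact form $2^{\mu' d}$ required in (\ref{SENSa}); the commutator structure then propagates the correct lengths to all shorter $v$ automatically. The argument for the (uniformly) ENS variant is identical, using the $l$-fold product of commutators in place of a single commutator; in the non-uniform setting \cref{lem:ENSimpliesSENS} makes this redundant.
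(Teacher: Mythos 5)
Your proof is correct and uses the same lifting-by-substitution idea as the paper. The paper's version is shorter because it first invokes \cref{lem:indepgens} to replace the generating set of $G$ by one containing each lifted element $h_a$ as a single letter; the substitution then preserves lengths exactly, so the constant $\mu$ carries over unchanged and no re-padding or exponent rescaling is needed. Your explicit bookkeeping (switching to $\mu'=\mu+k$, padding the leaves, and bit-shifting by $k$ in the uniformity argument) is sound but essentially re-derives what \cref{lem:indepgens} already provides.
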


\begin{proof}
  Let $\Gamma$ be a standard  generating set of $Q$ and fix for every $a \in \Gamma$ an element $h_a \in H \leq G$ 
  such that $h_a$ is mapped to $a$ under the canonical projection $\pi : H \to Q = H/K$.
  By \cref{lem:indepgens} we can assume that all elements $h_a$ belong to the generating set of $G$.
  Let $h_{d,v} \in \Gamma^*$ be the words witnessing the fact that $Q$ is (uniformly) (S)ENS (in \cref{def:SENS} they are denoted with $g_{d,v}$). 
  We then define words $g_{d,v}$ by replacing every letter $a$ in $h_{d,v}$ by the letter $h_a$.
  Clearly,  $\pi(g_{d,v}) = h_{d,v}$  holds. In particular, $g_{d,\eps}$ is non-trivial, since $h_{d,\eps}$ 
  is non-trivial.
\end{proof}

\begin{lemma}\label{lem:commutatorSENS}
  If $G$ is (uniformly) (S)ENS, then the commutator subgroup $G'$ is
  (uniformly) (S)ENS in $G$. 
\end{lemma}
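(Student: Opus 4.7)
My plan is to shift the (S)ENS tree of $G$ down by one level, so that what was an internal node at depth $d$ of the depth-$(d{+}1)$ witnessing tree of $G$ becomes a leaf of a new depth-$d$ tree that will witness that $G'$ is (S)ENS in $G$. By condition~(b) of the (S)ENS definition, internal nodes of the $G$-tree are (products of) commutators and therefore automatically represent elements of $G'$; the internal nodes of the new tree in turn lie in $G'$ as literal commutators of words.

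Concretely, I fix a witnessing family $(\tilde g_{d,v})$ for $G$ with SENS constant $\mu$, set $\mu':=2\mu+2$, and for $d\ge 1$ and $|v|=d$ I declare the leaf $g_{d,v}$ to be $\tilde g_{d+1,v}$, padded with trailing $1$'s up to length exactly $2^{\mu' d}$. The padding is feasible because $|\tilde g_{d+1,v}|=2^{\mu(d+1)+2}\le 2^{\mu' d}$ whenever $d\ge 1$. The internal nodes $g_{d,v}$ for $|v|<d$ are then defined via condition~(b). Because the letter $1$ is the identity, padding preserves the group value of every node, so $g_{d,\eps}$ still evaluates to $\tilde g_{d+1,\eps}\neq 1$ in $G$. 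In the ENS case no padding is needed at all, since the leaf-length bound $2^{\Oh(d)}$ is already met by $\tilde g_{d+1,v}$.

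The base case $d=0$ is the one spot of friction, because I need a single letter of the generating set lying in $G'\setminus\{1\}$. Since $G$ is non-abelian (a SENS group cannot be solvable), there exist $a,b\in\Sigma$ with $[a,b]\neq 1$; I adjoin a fresh pair $t,t^{-1}$ to $\Sigma$ with $\pi(t)=[a,b]$ and invoke \cref{lem:indepgens} to keep $G$ (uniformly) (S)ENS after the enlargement. Setting $g_{0,\eps}:=t$ yields a valid leaf of length $1=2^{\mu'\cdot 0}$ in $G'\setminus\{1\}$.

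For the uniform versions I need to compute the $i$-th letter of a leaf $g_{d,v}$ with $|v|=d$ in $\DLINTIME$. Either $i$ falls in the padding region (answer $1$), or it points into one of the four factors $\tilde g_{d+1,v0}^{\pm 1},\tilde g_{d+1,v1}^{\pm 1}$ making up the commutator $\tilde g_{d+1,v}$ (respectively, one of the $2l$ factors in the ENS case). A constant-time case distinction on the top bits of $i$, together with at most one symbol inversion, reduces the query to a single $\DLINTIME$ call to the uniformity algorithm for $\tilde g$. The main obstacle is precisely this length bookkeeping at the leaves in the SENS variant; it is what forces the padding step and the slightly larger constant $\mu'=2\mu+2$.
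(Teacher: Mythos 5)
Your proof is correct and follows the same core idea as the paper: take the depth-$(d+1)$ witnessing tree for $G$ and truncate it to depth $d$, so that the former depth-$d$ internal nodes (which are literal commutators, hence in $G'$) become the new leaves. The extra bookkeeping you supply---padding the new leaves to length exactly $2^{\mu'd}$ with $\mu'=2\mu+2$, adjoining a generator representing $[a,b]$ to handle the $d=0$ base case, and routing the uniformity query through a constant case distinction into the four (resp.\ $2l$) commutator factors---fills in details that the paper's one-line argument leaves implicit (in particular, the paper's truncation taken literally does not satisfy condition~(\ref{SENSa}) at $d=0$, so your fix there is a genuine improvement in rigor).
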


\begin{proof}
  Given $d\in\N$, produce the words $g_{d+1,v}$ 
  with $\abs v\le d+1$ witnessing the property for $G$, and consider the same
  words with $\abs v\le d$. They witness the same property for $G'$. In
  effect, we are truncating the leaves of a tree of commutators in
  $G$.
\end{proof}
The following is a stronger version of \cref{lem:commutatorSENS}:
\begin{lemma}\label{lem:SENStoSubgroup}
	If $G$ is (uniformly) (S)ENS and $N$ a normal subgroup such that $G/N$ is solvable, then  $N$ is
	(uniformly) (S)ENS in $G$. 
\end{lemma}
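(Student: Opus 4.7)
The plan is to generalize the shift argument of \cref{lem:commutatorSENS}. Since $G/N$ is solvable, its derived series terminates, so $G^{(k)} \le N$ for some constant $k \ge 0$. I first show that $G^{(k)}$ is (uniformly) (S)ENS in $G$; since $G^{(k)} \le N$, the same witnesses then verify that $N$ is (uniformly) (S)ENS in $G$.

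To show $G^{(k)}$ is (uniformly) (S)ENS in $G$, I would shift the SENS tree by $k$ levels instead of the single level used in \cref{lem:commutatorSENS}. Given (S)ENS witnesses $(g_{D,v})_{|v| \le D}$ for $G$, set $\tilde g_{d,v} := g_{d+k,v}$ for $|v| \le d$. Conditions~(b) and~(c) of \cref{def:SENS} for $\tilde g$ follow directly from the corresponding conditions for $g$. Each leaf $\tilde g_{d,v}$ with $|v| = d$, when unfolded via condition~(b) down to depth $d+k$, is a $k$-fold nested commutator of group elements and hence lies in $G^{(k)} \subseteq N$; every internal node is a product/commutator of such and so also lies in $G^{(k)}$. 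Its length is $2^{\mu(d+k) + 2k}$, which is bounded by $2^{\mu' d}$ with $\mu' := \mu + (\mu+2)k$ whenever $d \ge 1$, so padding with $1$'s yields the required equality.

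The residual case $d = 0$ requires a single-letter word evaluating to a non-trivial element of $N$. Since $G$ is non-solvable while $G/N$ is solvable, $N \ne 1$, so by \cref{lem:indepgens} I may enlarge $\Sigma$ to contain a non-trivial element of $N$ and use it as $\tilde g_{0, \eps}$. In the uniform setting, the $\DLINTIME$ algorithm for the $i$-th letter of $\tilde g_{d,v}$ proceeds by a constant-depth unfolding of the $k$-fold nested commutator: the high-order $2k$ bits of $i$ select one of the $4^k$ sub-blocks of the expansion (each a possibly inverted $g_{d+k, v'}$ for some descendant $v' \in \{0,1\}^{d+k}$ of $v$), and the remaining bits index inside that sub-block, looked up by the $\DLINTIME$ algorithm for the original SENS witnesses of $G$, followed, if necessary, by a letter-inversion. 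Since $k$ is a constant, this is $\DLINTIME$-computable. The ENS case is analogous, using that products of $k$-fold nested commutators still lie in $G^{(k)}$. The only technical subtlety lies in this length-normalization and uniform bookkeeping, but everything is controlled because $k$ is a fixed constant depending only on $G/N$.
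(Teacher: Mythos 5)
Your proof is correct and takes essentially the same route as the paper's: shift the SENS tree by $k = \delta$ levels (the derived length of $G/N$), setting $\tilde g_{d,v} := h_{d+\delta, v}$, so that every witness is a $\delta$-fold nested commutator and hence lies in $G^{(\delta)} \le N$. You flesh out the length-normalization, the $d=0$ edge case, and the uniformity bookkeeping, which the paper leaves implicit, but the core idea and construction are identical.
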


\begin{proof}
        Assume that $G/N$ is solvable of derived length $\delta$. 
        Hence, any $\delta$-fold nested commutator of elements in $G$ is contained in $N$.
        We only prove the theorem for the case that $G$ is (uniformly) SENS; the same argument
        applies if $G$ is (uniformly) ENS.
        Let $h_{d,v}$ be the elements witnessing that $G$ is (uniformly) SENS.
        Given $d$ and $v \in \oneset{0,1}^{\leq d}$ define 
	$g_{d,v} = h_{d + \delta, v}$. Then all these elements are $\delta$-fold nested commutators and, hence, contained in $N$.
	Thus, the elements $g_{d,v}$ witness that $N$ is (uniformly) SENS in $G$.	
\end{proof}

\begin{lemma}\label{lem:SENStoQuotient}
	If $G$ is (S)ENS and $N$ a solvable normal subgroup of $G$, then $G/N$ is
	(S)ENS. 
\end{lemma}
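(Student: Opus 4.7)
The plan is the following. Let $\delta$ be the derived length of $N$, so that $N^{(\delta)}=1$, and let $(g_{d,v})$ be witnesses for $G$ being SENS, with length constant $\mu$. The key claim is that for every $d \ge 1$ there exists some $v_d\in\oneset{0,1}^\delta$ such that $g_{d+\delta,v_d}\notin N$. To prove this by contrapositive, suppose $g_{d+\delta,v}\in N$ for every $v\in\oneset{0,1}^\delta$. A straightforward induction on $\delta-|v|$, using the commutator structure of \cref{def:SENS}(b) together with $[N^{(k)},N^{(k)}]\le N^{(k+1)}$, then yields $g_{d+\delta,v}\in N^{(\delta-|v|)}$ for all $v\in\oneset{0,1}^{\le\delta}$. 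Taking $v=\eps$ gives $g_{d+\delta,\eps}\in N^{(\delta)}=1$, contradicting condition (c) of \cref{def:SENS}.

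Having fixed such a $v_d$ for each $d\ge 1$, the idea is to relocate the witness tree by setting
\[\widehat g_{d,w}=g_{d+\delta,v_d w}\qquad\text{for }w\in\oneset{0,1}^{\le d},\]
that is, to take the subtree rooted at $v_d$. Then (b) holds automatically since $\widehat g_{d,w}=[\widehat g_{d,w0},\widehat g_{d,w1}]$ is inherited from the corresponding identity at $v_d w$ in the original tree, and $\widehat g_{d,\eps}=g_{d+\delta,v_d}\notin N$, so its image in $G/N$ is non-trivial, giving (c). The only point requiring care is the length condition (a): the leaves now have length $2^{\mu(d+\delta)}$, which is larger than $2^{\mu d}$. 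This is repaired by padding each leaf word with the identity letter~$1\in\Sigma$ to have length exactly $2^{\mu' d}$, where $\mu'=\mu(1+\delta)$; the needed inequality $\mu' d\ge \mu(d+\delta)$ holds for all $d\ge 1$. Because padding by $1$'s does not alter the evaluation in $G$, the commutator tree of the padded leaves still evaluates, at the root, to $g_{d+\delta,v_d}$. The degenerate case $d=0$ is handled separately by choosing $\widehat g_{0,\eps}$ to be any generator of $G$ not contained in $N$; such a generator exists because if all generators lay in $N$ we would have $G\le N$, contradicting the fact that the SENS group $G$ is non-solvable while $N$ is solvable.

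The ENS case is entirely analogous: with the product-of-commutators structure $g_{d,v}=\prod_{j=1}^{l/2}[g_{d,v(2j-1)},g_{d,v(2j)}]$ from \cref{def:ENS}, the same induction shows that if every depth-$\delta$ witness lies in $N$, then $g_{d+\delta,\eps}\in N^{(\delta)}=1$, since products of elements of $N^{(k)}$ remain in $N^{(k)}$ and commutators of such elements land in $N^{(k+1)}$. The only (mild) obstacle is the length bookkeeping already described; and note that no uniformity statement is made or attempted, since the choice of $v_d$ genuinely depends on $d$ and there is no obvious way to compute it in \DLINTIME.
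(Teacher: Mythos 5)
Your proof is correct and follows essentially the same route as the paper's: pass to depth $d+\delta$, argue by contradiction that some $u\in\{0,1\}^\delta$ has $h_{d+\delta,u}\notin N$ (since otherwise the root would lie in $N^{(\delta)}=1$), and reindex by setting $g_{d,v}=h_{d+\delta,uv}$. The one place you go beyond the paper is in explicitly repairing condition~(\ref{SENSa}) — padding to a new constant $\mu'=\mu(1+\delta)$ and handling $d=0$ separately — which the paper (here and in \cref{lem:SENStoSubgroup,lem:commutatorSENS,lem:centerSENS}) leaves implicit; that care is welcome but does not constitute a different argument.
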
 
Be aware that we do not know whether there is a variant of \cref{lem:SENStoQuotient} for uniformly (S)ENS. The problem is to compute the word $u$ in the proof below.

\begin{proof}
    Again, we only prove the statement for the case that $G$ is SENS.
	As in the proof of \cref{lem:SENStoSubgroup}, let $h_{d,v}$ for $d \in \N$ and $v \in \oneset{0,1}^{\leq d}$ denote the elements witnessing that $G$ is SENS. Let $\delta $ denote the derived length of $N$. 
	Assume for contradiction that all the elements $h_{d + \delta, v}$ for $v \in \oneset{0,1}^\delta$ are in $N$. 
	Then, $h_{d + \delta, \eps}$ would be trivial because it is a $\delta$-fold nested commutator of the $h_{d + \delta, v}$ for $v \in \oneset{0,1}^\delta$ and the derived length of $N$ is $\delta$. 
	Thus, there exists some $u \in \oneset{0,1}^\delta$ such that $h_{d + \delta, u} \not \in N$. 
	We fix this $u$ and  set $g_{d,v} = h_{d + \delta, uv}$ for $v \in \oneset{0,1}^{\leq d}$. Since  $g_{d,\eps} = h_{d + \delta,  u} \not \in N$, this shows that $G/N$ is SENS.	
\end{proof}

\begin{lemma}\label{lem:centerSENS}
	If $G$ is (uniformly) (S)ENS, then $G/Z(G)$ is
	(uniformly) (S)ENS. 
\end{lemma}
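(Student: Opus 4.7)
My plan is: for the non-uniform statement, invoke \cref{lem:SENStoQuotient}, since $Z(G)$ is abelian and hence solvable (of derived length $1$); for the non-uniform ENS case, first upgrade $G$ to SENS via \cref{lem:ENSimpliesSENS}. The substantive content is the uniform SENS case, which is what I concentrate on below.

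The obstacle is precisely the one flagged in the caveat after \cref{lem:SENStoQuotient}: one must choose, uniformly in $d$, an index $u$ such that $g_{d+1, u} \notin Z(G)$. For $N = Z(G)$ this choice is canonical, namely $u = 0$: if $g_{d+1, 0}$ were in $Z(G)$, then $[g_{d+1, 0}, g_{d+1, 1}] = 1$ in $G$ and hence $g_{d+1, \eps} = 1$, contradicting condition (\ref{SENSc}) of \cref{def:SENS}. This observation is the crux of the proof.

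With the canonical choice in hand I would define $\tilde g_{d, v} := g_{d+1, 0v}$ for $d \ge 1$ and $v \in \{0,1\}^{\leq d}$, so that the commutator tree structure is inherited and $\tilde g_{d, \eps}$ equals the non-central element $g_{d+1, 0}$ as a group element. The leaves $|v| = d$ have length $2^{\mu(d+1)}$, which is not of the form $2^{\mu' d}$ for any fixed $\mu'$, so I take $\mu' := 2\mu$ and pad each leaf on the right with $2^{\mu' d} - 2^{\mu(d+1)}$ copies of the identity letter $1$. This quantity is non-negative for every $d \ge 1$, padding by $1$ does not change the underlying group element (hence non-centrality of $\tilde g_{d, \eps}$ is preserved), and the internal nodes are then rebuilt as word-level commutators of the padded leaves. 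The corner case $d = 0$ is handled by fixing any single generator of $G$ outside $Z(G)$, which exists because $G$ is non-abelian.

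Uniform letter lookup then reduces to a \DLINTIME\ high-bit comparison: on input $(v, i, a)$ with $|v| = d$ and $i$ encoded in $\mu' d$ bits, check whether $i \ge 2^{\mu(d+1)}$; if so, return whether $a = 1$ (we are in the padded tail), and otherwise re-interpret $i$ as a $\mu(d+1)$-bit number and forward the query $(0v, i, a)$ to the uniform SENS oracle of $G$. The uniform ENS variant is treated in the same spirit, with the canonical choice step adapted to the product-of-commutators structure; this adaptation is somewhat more delicate but follows the same contradiction pattern.
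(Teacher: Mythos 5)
Your proof coincides with the paper's: both set $\tilde g_{d,v}:=g_{d+1,0v}$ and observe that $g_{d+1,0}\notin Z(G)$, since otherwise $g_{d+1,\eps}=[g_{d+1,0},g_{d+1,1}]$ would be trivial in $G$, contradicting condition~\eqref{SENSc} of \cref{def:SENS}. The only difference is that you make the length renormalization and \DLINTIME\ lookup explicit (padding leaves to length $2^{2\mu d}$, treating $d=0$ separately, and splitting the index into a high-bit test plus a forwarded query to the oracle for $G$), bookkeeping the paper leaves to its earlier remark that words of length at most $2^{\mu d}$ can be padded to exactly $2^{\mu d}$.
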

\begin{proof}
	As before, let $h_{d,v}$ for $d \in \N$ and $v \in \oneset{0,1}^{\leq d}$ denote the elements witnessing that $G$ is (uniformly) SENS. 
	We set $g_{d,v} = h_{d + 1, 0v}$ for $v \in \oneset{0,1}^{\leq d}$. Then $g_{d,\eps} = h_{d + 1, 0}$ cannot be in $Z(G)$ for otherwise $h_{d+1, \eps} = [g_{d,\eps}, h_{d + 1, 1}]$ would be trivial. This shows that $G/Z(G)$ is (uniformly) SENS.
\end{proof}
The following result is, for $G=A_5$, the heart of Barrington's
argument:
\begin{lemma}\label{lem:finiteSENS}
  If $G$ is a finite non-solvable group, then $G$ is uniformly SENS.
\end{lemma}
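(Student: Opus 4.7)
The plan is to reduce to a non-abelian finite simple group via \cref{lem:subquotient} and then carry out a Barrington-style recursive construction, using the fact (Ore's theorem) that every element of a finite non-abelian simple group is a single commutator.

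First I would reduce: since $G$ is finite and non-solvable, its derived series stabilizes at a non-trivial perfect subgroup $P$. Quotienting $P$ by a maximal proper normal subgroup yields a finite simple group $S$ which must be non-abelian, being a non-trivial quotient of the perfect group $P$. Then $S$ is a finitely generated subquotient of $G$, so by \cref{lem:subquotient} it suffices to prove that $S$ itself is uniformly SENS.

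Next I set up the witnesses for $S$. By Ore's theorem every element of a finite non-abelian simple group is a commutator, so there is a function $\phi\colon S\to S\times S$, $\phi(s)=(\phi_0(s),\phi_1(s))$, with $s=[\phi_0(s),\phi_1(s)]$ in $S$ for every $s\in S$. (For small concrete $S$ such as $A_5$, $\phi$ can also be exhibited by a direct finite computation, as in Barrington's original paper, so Ore is not strictly needed.) Fix a non-trivial $g_0\in S$ and for each $s\in S$ a representative word $w_s\in\Sigma^*$; set $M=\max_{s\in S}\abs{w_s}$ and pick $\mu$ with $2^\mu\geq M$. Label each node $v\in\oneset{0,1}^{\leq d}$ of the complete binary tree of depth $d$ by an element $s_v\in S$, via $s_\eps=g_0$ and $s_{vi}=\phi_i(s_v)$ for $i\in\oneset{0,1}$. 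At the leaves take $g_{d,v}$ to be the word $w_{s_v}$ padded with the padding letter $1$ up to length $2^{\mu d}$, and at internal nodes take the commutator word $g_{d,v}=[g_{d,v0},g_{d,v1}]$. A straightforward induction on $d-\abs v$ shows that $g_{d,v}=s_v$ in $S$, so $g_{d,\eps}=g_0\neq 1$; the length condition and commutator identity hold by construction.

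For the uniformity condition, I would describe a $\DLINTIME$ procedure that, given $v\in\oneset{0,1}^d$ and $i$ encoded in $\mu d$ bits, first walks down $v$ one bit at a time while updating the current element $s_v\in S$ through a constant-size lookup table for $\phi$ (total cost $O(d)$, linear in the input length), and then returns the $i$-th letter of the padded leaf word $w_{s_v}1^{2^{\mu d}-\abs{w_{s_v}}}$ in constant additional time. The construction is essentially routine once the reduction to the simple case is in place; the only non-elementary ingredient is Ore's theorem, and the main care I anticipate is matching up the padding and binary encoding with the precise formal requirements of the uniform SENS definition.
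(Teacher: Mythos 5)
Your proof is correct and follows essentially the same route as the paper's: reduce to a non-abelian finite simple subquotient via \cref{lem:subquotient} (the paper uses a composition series where you use the stabilized derived series plus a maximal normal subgroup, but this is the same observation), then invoke Ore's theorem to define the commutator labels by table lookup, and note that walking down $v$ with a constant-size lookup table gives the $\DLINTIME$ bound. The paper additionally remarks, as you do, that Ore's theorem can be avoided by a direct pigeonhole argument on paths in the commutator tree.
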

\begin{proof}
Let us first show the statement for a non-abelian finite simple group $G$.
  By the proof of Ore's conjecture~\cite{LiebeckOST10}, every element
  of $G$ is a commutator. This means that we may choose
  $g_\eps\neq1$ at will, and given $g_v$ we define $g_{v0},g_{v1}$
  by table lookup, having chosen once and for all for each element of
  $G$ a representation of it as a commutator. Computing $g_v$ requires
  $\abs v$ steps and bounded memory.
  
  If $G$ is finite non-solvable, then any composition series of $G$ contains
  a non-abelian simple composition factor $G_i/G_{i+1}$. Hence, we can apply
  \cref{lem:subquotient}.
\end{proof}
Notice that at the time of Barrington's original
proof~\cite{Barrington89}, Ore's conjecture was not known to
hold. This explains that he used only what we defined as
ENS in order to establish his result on
$\Nc{1}$-hardness. Nevertheless, a direct proof of \cref{lem:finiteSENS} is also possible: 

First of all, it is clear that every non-abelian finite simple $G$ group is ENS. Thus, \cref{lem:ENSimpliesSENS} tells us that $G$ is SENS. Let $g_{d,v}$ be the words from \cref{def:SENS}. It remains to show Condition~\eqref{SENSu}
from \cref{def:SENS}. 
Since $G$ is finite, we can find a subset $S$ of $G$ such that for each $g \in S$ there are $h_1, h_2 \in S$ with $g = [h_1,h_2]$. In order to find such $S$, take $d_0 = \abs{G}+1$. Then on any path from the root $\eps$ to a leaf $v \in \oneset{0,1}^{d_0}$ in the complete binary tree of depth $d_0$, there must be vertices $u_v,t_v$ with $\abs{u_v}< \abs{t_v}$ (\ie $u_v$ is a prefix of $t_v$ and the latter is a prefix of $v$) and $g_{d_0,u_v} = g_{d_0,t_v}$ in $G$. Let $S$ be just the union over all $g_{d_0,u}$ with $u$ being a prefix of some $t_v$.
Using this $S$ (which can be hard-wired in our algorithm for computing the $g_{d,v}$), the uniformity condition can be seen as follows: for each $g \in S$ fix $h_1, h_2 \in S$ with $g = [h_1,h_2]$ (also hard-wired in the algorithm). We can define new $g_{d,v}$ for the 
uniform SENS condition using this recursion by starting with some arbitrary fixed $g_{\eps} = g_{d,\eps} \in S$ for all $d$. Now it is clear that $g_{d,v}$ can be computed in $\DLINTIME$ from $v$~-- actually by a finite state automaton  (note that it is independent of $d$): the states are just the elements of $S$ with initial state $g_\eps$. If the current state is $g = [h_1,h_2]$ and the next input bit from $v$ is 0, then the next state is $h_1$, otherwise, it is $h_2$.

By \cref{lem:subquotient} and \cref{lem:finiteSENS}, every group
having a subgroup with a finite, non-solvable quotient is uniformly
SENS. Since every free group projects to a finite simple group, we get:

\begin{corollary}\label{cor:freeSENS}
  If $F_n$ is a finitely generated free group of rank $n \geq 2$, then $F_n$ is uniformly SENS.
\end{corollary}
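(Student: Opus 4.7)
The plan is to apply the machinery already assembled in the excerpt almost verbatim: combine \cref{lem:finiteSENS} with \cref{lem:subquotient}, given that the free group $F_n$ admits a non-abelian finite simple quotient.

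First I would exhibit a concrete surjection $\pi\colon F_n \twoheadrightarrow A_5$. Since $n\ge 2$ and $A_5$ is generated by two elements (for instance the $3$-cycle $(1\,2\,3)$ and the $5$-cycle $(1\,2\,3\,4\,5)$), we can map two of the free generators of $F_n$ to such a generating pair of $A_5$ and send all remaining generators to the identity. This gives $A_5$ as a quotient of $F_n$, i.e., as a subquotient $H/K$ with $H=F_n$ and $K=\ker\pi$.

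Next, I would invoke \cref{lem:finiteSENS}: since $A_5$ is a finite non-solvable (indeed simple non-abelian) group, $A_5$ is uniformly SENS. Finally, \cref{lem:subquotient} says that uniform SENS-ness lifts from a finitely generated subquotient to the ambient finitely generated group; applying it to the subquotient $A_5 = F_n/\ker\pi$ of $F_n$ yields that $F_n$ itself is uniformly SENS.

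There is essentially no obstacle here: the only points worth noting are (i) the choice of standard generating set is immaterial by \cref{lem:indepgens}, so we are free to assume the two generators of $F_n$ used in $\pi$ lie in the standard generating set $\Sigma$ of $F_n$ (and the pullbacks $h_a$ for $a\in\{(1\,2\,3),(1\,2\,3\,4\,5)\}$ used in the proof of \cref{lem:subquotient} are then just single letters in $\Sigma$), and (ii) the uniformity of the SENS-condition is preserved because the substitution $a\mapsto h_a$ in the proof of \cref{lem:subquotient} is a length-preserving letter-to-letter map, so the $\DLINTIME$ algorithm computing the $i$-th letter of the witnessing words for $A_5$ carries over unchanged to one for $F_n$.
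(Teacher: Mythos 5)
Your proof is correct and follows essentially the same route as the paper: project $F_n$ onto a finite non-abelian simple group (the paper leaves the choice implicit; you pick $A_5$) and then combine \cref{lem:finiteSENS} with \cref{lem:subquotient}. The extra remarks about generating sets and the letter-to-letter substitution are accurate and simply unpack what is already inside the proof of \cref{lem:subquotient}.
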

This result was essentially shown by Robinson \cite{Robinson93phd},
who showed that the word problem of a
  free group of rank two is $\Nc{1}$-hard. He used a similar commutator
  approach as Barrington. One can prove \cref{cor:freeSENS} also directly by exhibiting a
  free subgroup of infinite rank whose generators are easily
  computable. For example, in $F_2 = \langle x_0,x_1\rangle$ take
  $g_v = x_0^{-v}x_1 x_0^{v}$ for $v \in \oneset{0,1}^d$ viewing the
  string $v$ as a binary encoded number (the other $g_v$ for
  $v \in \oneset{0,1}^{<d}$ are then defined by the commutator
  identity in \cref{def:SENS}), and appropriately padding with
  $1$'s. It is even possible to take the $g_v$ of constant length: consider a free
  group $F=\langle x_0,x_1,x_2\rangle$, and the elements
  $g_v=x_{v\bmod 3}$ with $v$ read as the binary representation of an
  integer. It is easy to see that the nested commutator $g_\eps$ is
  non-trivial.

\begin{example}\label{ex:nonENS} 
  Here is a finitely generated group that is not solvable, has decidable word problem, but
  is not ENS. The construction is inspired from~\cite{Wilson80}.

  Start with the trivial group $H_0=1$ and set
  $H_{n+1}=H_n\wr\Z$. We have a natural embedding $H_0\le H_1$, which
  induces for all $n$ an embedding $H_n\le H_{n+1}$. We set
  $H=\bigcup_{n\ge0}H_n$, and denote by $x_0,x_1,\dots$ the generators
  of $H$, starting with $\Z=\langle x_0\rangle$. In particular,
  $H_d \coloneqq\langle x_0,\dots,x_d\rangle$ is solvable of class precisely $d$ whereas $H$ is non-solvable.

  For an injective function $\tau\colon\N\to\N$ to be specified later,
  consider in the {\em unrestricted} wreath product $H^\Z\rtimes\Z$ the
  subgroup $G$ generated by the following two elements:
  \begin{itemize}
  \item the generator $t$ of $\Z$ and
  \item the function $f\colon\Z\to H$ defined by $f(\tau(n))=x_n$ and all
    other values being $1$.
  \end{itemize}
  We make the assumption that $\tau$ has the following property: For every integer 
  $z \in \mathbb{Z} \setminus \{0\}$ there is at most one 
  pair $(m,i)  \in \N\times\N$ with $z = \tau(m)-\tau(i)$.
  For instance, the mapping $\tau(n) = 2^n$ has this property.
  
  Let us define the conjugated mapping $f_i = t^{\tau(i)} f t^{-\tau(i)} \in G$.  We have 
  $f_i(0) = x_i$ and more generally
 $f_i(\tau(m)-\tau(i)) = x_m$ (and $f_i^{-1}(\tau(m)-\tau(i)) = x_m^{-1}$) for all $m$.
  Consider now a product $g = f_{i_1}^{\alpha_1} \cdots f_{i_k}^{\alpha_k}$
  ($\alpha_1, \ldots, \alpha_k \in \{-1,1\}$). We get
  $g(0) = x_{i_1}^{\alpha_1} \cdots x_{i_k}^{\alpha_k}$. 
  For a position $z \in \mathbb{Z} \setminus \{0\}$ which is not
  a difference of two different $\tau$-values we have $g(z)=1$.
  For all other  non-zero positions $z$ there is a unique
  pair $(m,i)$ such that $z = \tau(m)-\tau(i)$, which yields $g(z) = x_m^e$, where $e$ is the sum of those $\alpha_{j}$
  such that $i_j = i$. Hence, the commutator $[g,h]$ of two mappings 
  $g = f_{i_1}^{\alpha_1} \cdots f_{i_k}^{\alpha_k}$ and  $h = f_{j_1}^{\beta_1} \cdots f_{j_l}^{\beta_l}$
  satisfies $[g,h](0) = [x_{i_1}^{\alpha_1} \cdots x_{i_k}^{\alpha_k}, x_{j_1}^{\beta_1} \cdots x_{j_l}^{\beta_l}]$
  and $[g,h](z)=0$ for all $z \in \mathbb{Z} \setminus \{0\}$.
  Hence, $G$ contains the restricted wreath product $[H,H]\wr\Z$, so in particular is
  infinite and non-solvable; and $G'$ contains the restricted direct product $[H,H]^{(\Z)}$.
  
  We now assume that $\tau$ grows superexponentially (take for instance $\tau(n) = 2^{n^2}$).
  Note that if $k\in\Z$ is not of the form $\tau(i)-\tau(j)$ for some $i, j \in \N$, then
  $t^k f t^{-k}$ and $f$ commute. It follows that the intersection of
  $G''$ with the ball of radius $R$ in $G$ is contained in
  $[H_d,H_d]^\Z$  for $d$ growing
  sublogarithmically in $R$ (more precisely as $\Oh(\sqrt{\log R})$), and in particular does not contain a
  nested non-trivial commutator of depth $\Omega(\log R)$. This implies that $G$ is not SENS (and in fact
  not ENS).

  Furthermore, if $\tau$ is computable, then $\WP(G)$ is
  decidable: given a word $w\in\oneset{t^{\pm1},f^{\pm1}}^*$, compute its
  exponent sum in the letters $t^{\pm1}$ and $f^{\pm1}$ (which must
  both vanish if $w=_G1$) and the coordinates $-|w|,\dots,|w|$ of its
  image in $H^\Z$. Each of these coordinates belongs to a finitely
  iterated wreath product $\Z\wr\cdots\wr\Z$, in which the word
  problem is decidable (again by counting exponents and computing
  coordinates).
\end{example}

\begin{example}\label{ex:nonENS2} 
Here is an example of a f.g.~non-ENS group which is 
uniformly SENS in a larger group. We continue on the notation of Example~\ref{ex:nonENS}.

Consider the non-ENS group $G=\langle t,f\rangle$ from
Example~\ref{ex:nonENS}. The reason that $G$ fails to be uniformly SENS
is the following: there are elements $y_i \in G$ ($i \ge 0$) such
that a non-trivial depth-$d$ nested commutator may uniformly be constructed using
$y_0,\dots,y_{d-1}$, but the $y_i$ have length growing superexponentially
in $i$.  

Essentially by the same construction as in Example~\ref{ex:nonENS} one
can embed $G$ as a heavily distorted subgroup in a finitely generated
subgroup $\widetilde G\coloneqq\langle t,f,\tilde t, \tilde f\rangle$
of the unrestricted wreath product $G^\Z\rtimes\Z$, thereby bringing the
$y_i$ back to exponential length: the elements $t,f$ are the
generators of $G$, seen as elements of $G^\Z$ supported at $0$;
$\tilde t$ is the generator of $\Z$; and $\tilde f\in G^\Z$ takes
value $y_i$ at $2^i$. Then $G$ is uniformly SENS in $\widetilde G$,
since the $[y_i,y_j]$ are expressible as words of length
$2^{\Oh(i+j)}$ in $\tilde f,\tilde t$, and their inverses.
\end{example}
The following technical result will be used to prove that weakly
branched groups and Thompson's group $F$ are uniformly SENS.

\begin{proposition}\label{prop:wreathSENS}
  Let $G$ be a finitely generated group with the standard generating set $\Sigma$. Moreover, let
  $h_d$ $(d\in\N)$ be words over $\Sigma$ with
  $|h_d|\in2^{\Oh(d)}$ and such that given $1^d$ and a binary coded number $i$ with $\Oh(d)$ bits
  one can compute in \DLINTIME the $i$-th letter of $h_d$.
  Assume that $H = \langle h_0,h_1,\dots\rangle$ acts on a tree of words
  $X^*$ (where $X$ is not necessarily finite), and that $X$ contains pairwise distinct elements $v_{-1},v,v_1$ such that
  \begin{itemize}
  \item $h_d$ fixes all of $X^*\setminus v^d X^*$, and
  \item $(v^dv_{-1})^{h_d}=v^{d+1}$ and $(v^{d+1})^{h_d}=v^d v_1$.
  \end{itemize}
   Then $H$ is uniformly SENS in $G$, so in particular $G$ is uniformly SENS.
   Moreover, if $H$ is finitely generated and the $h_d$ are words over the generators of $H$, then $H$ is uniformly SENS.
\end{proposition}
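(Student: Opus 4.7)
The plan is to build, for each depth $d$, words $g_{d,w}$ with $w\in\{0,1\}^{\le d}$ satisfying \cref{def:SENS}, using the $h_i$'s as building blocks. Since each $h_i$ is a word over $\Sigma$ with $|h_i|\in 2^{\Oh(i)}$, it suffices to define each $g_{d,w}$ as a word in the $h_i$'s. At the leaves $|w|=d$ I would set $g_{d,w}=h_{\alpha(w)}$ for a suitably chosen labeling $\alpha\colon\{0,1\}^d\to\ccinterval{0}{cd}$ with $c$ a constant, so that only $h_i$'s with $i\in\Oh(d)$ appear; the internal nodes are then forced by the commutator relation $g_{d,w}=[g_{d,w0},g_{d,w1}]$ of condition~(\ref{SENSb}).

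The length bound (\ref{SENSa}) follows easily: leaves have length $|h_{\alpha(w)}|\in 2^{\Oh(cd)}=2^{\Oh(d)}$, and each commutator level multiplies lengths by at most $4$, so $|g_{d,w}|\le 4^{d-|w|}\cdot 2^{\Oh(d)}=2^{\Oh(d)}$ (after padding with $1$'s to an exact power of two as remarked after \cref{def:SENS}). Uniform computability (\ref{SENSu}) reduces to navigating the balanced commutator tree of depth $d$: given $w$ and a binary-coded position $i$ with $\Oh(d)$ bits, one peels off the top bits of $i$ to descend through the expansion $[a,b]=a^{-1}b^{-1}ab$ down to a target leaf $w'$, tracking inversions along the way, and then invokes the hypothesised \DLINTIME-procedure for $h_{\alpha(w')}$. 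All of this fits into linear time on inputs of length $\Oh(d)$.

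The heart of the argument is establishing non-triviality (\ref{SENSc}), i.e.\ $g_{d,\eps}\ne 1$ in $G$. Here my plan is to exhibit a node in $X^*$ that is moved by $g_{d,\eps}$, exploiting the only prescribed behaviour of the $h_d$'s, namely $(v^d v_{-1})^{h_d}=v^{d+1}$ and $(v^{d+1})^{h_d}=v^d v_1$, together with the nested support inclusion $\supp(h_d)\subseteq v^d X^*$. I would prove by induction on $d-|w|$ that the partial commutator $g_{d,w}$ acts on a designated ``witness node'' (of the form $v^{d+1}$ or some canonical descendant) in a controlled and non-trivial way, with the labeling $\alpha$ arranged so that in computing $[g_{d,w0},g_{d,w1}]$ on the witness, the only needed values of each $h_j$ are the two that are specified by the hypothesis.

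The main obstacle is exactly this non-triviality step: since we only know the action of $h_d$ on two particular nodes, commutator cancellations at nodes where we have no information could in principle erase the witness. The design of $\alpha$ must therefore be made so that every time a commutator $[g_{d,w0},g_{d,w1}]$ is evaluated on its witness, the computation uses only the given values $(v^j v_{-1})^{h_j}=v^{j+1}$ and $(v^{j+1})^{h_j}=v^j v_1$. Once such an $\alpha$ is chosen and the induction carried out, the ``moreover'' part of the proposition is immediate: the construction uses only the $h_i$'s, so if $H$ is finitely generated and the $h_i$'s are words over its generators, the same words $g_{d,w}$ witness that $H$ itself is uniformly SENS.
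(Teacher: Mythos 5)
Your plan correctly anticipates the overall shape of the paper's argument: take leaves of the balanced commutator tree to be (conjugates/variants of) the $h_i$ with indices in $\Oh(d)$, observe that lengths and uniformity are then routine, and recognize that the content of the proof is the non-triviality claim (\ref{SENSc}). However, the proposal stops exactly where the proof begins. Nothing is proved about the existence of the labeling $\alpha$, and the inductive invariant you gesture at — ``the partial commutator $g_{d,w}$ acts on a designated witness node in a controlled and non-trivial way'' — is too weak to push through the commutator step. To see that $[g_{d,w0},g_{d,w1}]$ moves a witness, it is \emph{not} enough to know that each of $g_{d,w0},g_{d,w1}$ moves its own witness: a commutator of two non-trivial elements can easily be trivial. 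The paper instead proves (by induction on $d$) a \emph{structural} statement: the intermediate elements factor as $g_{d,s,q}=(v^{d+q}*k_{d+q})\cdot(v^{d-1}v_s*\Box)$, i.e.\ a product of a completely understood non-trivial piece on the subtree below $v^{d+q}$ and an ``unimportant'' piece on a disjoint subtree below $v^{d-1}v_s$. This factorization is what lets the unimportant pieces commute past everything and cancel inside the commutator, so that $[g_{d-1,-1,0}^s,g_{d-1,1,q+1}]$ again has the same shape one level up. A single moved node does not capture this, and your induction as stated would not close.

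A second, concrete omission: your labeling $\alpha\colon\{0,1\}^d\to\ccinterval{0}{cd}$ assigns each leaf a bare $h_{\alpha(w)}$, with no inversions. In the paper's construction the leaves are words of the form $h_r^{\pm1}$, and the sign is essential: it determines whether conjugation sends the support into $v^{d-1}v_1 X^*$ or into $v^{d-1}v_{-1} X^*$, and the induction relies on the two arguments of each commutator having their ``junk'' on the two \emph{different} siblings $v^{d-2}v_{-1}$ and $v^{d-2}v_1$ of $v^{d-1}$, so that these pieces commute and cancel. Without the sign you cannot steer the junk onto distinct children, and the disjointness needed for the cancellation fails. (The paper encodes this via a recursion $g_{d,s,q}=[g_{d-1,-1,0}^s,g_{d-1,1,q+1}]$ with a sign parameter $s$ and an auxiliary index $q$, rather than a single function $\alpha$ on leaves, precisely because the index and the sign must be propagated coherently.) In short: the hard part of the proposition is exactly the part you defer, and the leaf-labeling you propose is missing a necessary ingredient.
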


\begin{proof}
For non-negative integers $d,q$ and $s\in\oneset{-1,1}$, consider
	the following elements $g_{d,s,q}$, defined inductively:
	\[g_{0,s,q}=h_q,\qquad g_{d,s,q} = [g_{d-1,-1,0}^s,\, g_{d-1,1,q+1}]\text{ if }d>0.\] 
We claim that $g_{d,1,0} \neq_G 1$. This implies the proposition:
By definition $g_{d,1,0}$ is a
$d$-fold nested commutator of words of the form $h_r^{\pm1}$
for various $r\le d$. It is easy to see that given $v \in\oneset{0,1}^d$, the index $r_v$ corresponding to the leaf of the commutator tree that is indexed by $v$ is computable in \DLINTIME and  by the hypothesis of the proposition $h_{r_v}$ is \DLINTIME-computable. 

Thus, it remains to show that $g_{d,1,0}$ is non-trivial. Indeed, we claim that, for $d>0$, the element $g_{d,s,q}$ acts only on the
  subtrees below $v^{d+q}$ and $v^{d-1}v_s$, and furthermore acts as
  $h_{d+q}$ on the subtree below $v^{d+q}$.

  We prove this claim by induction on $d$. Recall that for
  $g\in \Aut(X^*)$ and a node $w\in X^*$ we write
  $w*g$ for the element of $\Aut(X^*)$ that acts as $g$ on the subtree
  $w X^*$ and trivially elsewhere. Note that a conjugate $(w*g)^h$ with $h \in \Aut(X^*)$
  can be written as $(w*g)^h = w^h * g'$ for some $g' \in \Aut(X^*)$.
  With this notation, we may write
  $h_r=v^r*k_r$ for $k_r=h_r@v^r \in \Aut(X^*)$. Our claim becomes ($\Box$ represents an arbitrary element of $\Aut(X^*)$ that is not important)
  \[g_{d,s,q}=(v^{d+q}*k_{d+q})(v^{d-1}v_s* \Box).\]
  For $d=1$ we have
  \[
  g_{1,s,q}=[h_0^s,h_{1+q}] = \big(h_{1+q}^{h_0^s}\big)^{-1} h_{1+q} = 
  \big( (v^{1+q}*k_{1+q})^{h_0^s}\big)^{-1} (v^{1+q}*k_{1+q}) .
  \] 
  Moreover, the conjugate $(v^{1+q}*k_{1+q})^{h_0^s}$ is of the form
  $(v^{1+q})^{h_0^s} * \Box =v_s*\Box$ and  we get
  $g_{1,s,q}= (v_s*\Box)^{-1} (v^{1+q}*k_{1+q}) = (v^{1+q}*k_{1+q})(v_s*\Box)$.

  Consider now $d>1$. By induction,
  $g_{d-1,-1,0}=(v^{d-1}*k_{d-1})(v^{d-2}v_{-1}*\Box)$ and
  $g_{d-1,1,q+1}=(v^{d+q}*k_{d+q})(v^{d-2}v_1*\Box)$. Now
  $v^{d-2}v_{-1}*f$, $v^{d-1}*g$, and $v^{d-2}v_1*h$ commute for all
  $f,g,h\in \Aut(X^*)$ since they act non-trivially on disjoint subtrees. We get
  \[g_{d,s,q}=[g_{d-1,-1,0}^s,g_{d-1,1,q+1}]=[v^{d-1}*k_{d-1}^s,v^{d+q}*k_{d+q}]=(v^{d-1}v_s*\Box)(v^{d+q}*k_{d+q})\]
  using arguments as for the case $d=1$.
\end{proof}

\begin{theorem}\label{thm:wreathSENS}
	Let $G$ be a finitely generated group with $G \wr H \leq G$ for some non-trivial group $H$. Then $G$ is uniformly SENS.
\end{theorem}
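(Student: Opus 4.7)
The plan is to apply \cref{prop:wreathSENS} with suitably chosen elements $h_d$. First, I would reduce to the case where $H$ contains an element of order at least $3$: if every non-trivial element of $H$ has order $2$ (in particular, if $|H|=2$), then I replace $H$ by $H\wr H$, using the associativity of the permutational wreath product from \cref{sec-wreath} to get $G\wr(H\wr H)=(G\wr H)\wr H\leq G$, and noting that $H\wr H$ contains a copy of $\Z/2\wr\Z/2\cong D_4$, which has an element of order $4$. From now on I fix $a\in H$ of order at least $3$.

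Next, I fix the embedding $\iota\colon G\wr H\hookrightarrow G$ and define the injective endomorphism $\psi\colon G\to G$ by $\psi(g)=\iota(\delta_g\cdot 1)$, where $\delta_g\in G^{(H)}$ has value $g$ at $1\in H$ and value $1$ elsewhere. By enlarging the standard generating set $\Sigma$ and padding with the identity letter $1\in\Sigma$, I may assume $a\in\Sigma$ and that $\psi(g)$ has a common length $C$ for every $g\in\Sigma$. With $X=H$ (acting on itself by right multiplication) and $v_{-1}=1$, $v=a$, $v_1=a^2\in X$ (pairwise distinct by the order assumption on $a$), I set $h_0=a$ and
\[
h_d = a^{-1}\,\psi(h_{d-1})\,a \qquad \text{for } d\ge1.
\]
A direct induction on $d$ shows that, under the self-similar action of $G$ on $X^*$ induced by $\iota$, the element $h_d$ fixes $X^*\setminus v^dX^*$ and permutes the children of $v^d$ by right multiplication by $a$; in particular $(v^dv_{-1})^{h_d}=v^{d+1}$ and $(v^{d+1})^{h_d}=v^dv_1$. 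The inductive point is that conjugating $\psi(h_{d-1})$ by $a$ shifts its support from the subtree at $1$ to the subtree at $v=a$, converting action of $h_{d-1}$ at $v^{d-1}$ into action at $v\cdot v^{d-1}=v^d$.

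Unfolding the recursion gives
\[
h_d = a^{-1}\psi(a)^{-1}\cdots\psi^{d-1}(a)^{-1}\,\psi^d(a)\,\psi^{d-1}(a)\cdots\psi(a)\,a,
\]
and since $\psi$ scales length by $C$, each $\psi^k(a)$ has length $C^k$, so $|h_d|\in 2^{\Oh(d)}$. To compute the $i$-th letter of $h_d$ in \DLINTIME from $(1^d,i)$ (with $i$ of $\Oh(d)$ binary bits), I would first locate the factor $\psi^k(a^{\pm1})$ containing position $i$. Since the cumulative factor lengths are geometric partial sums, finding the correct $k$ reduces to reading the appropriate $\log_2 C$-bit window of $i$, which a random-access machine does in $\Oh(d)$ time. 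The local position within $\psi^k(a)$ is then extracted by iteratively using the self-similar identity $\psi^k(a)=\psi(\psi^{k-1}(a))$: each iteration strips off $\log_2 C$ low-order bits of the running index and performs a constant-size table lookup, again $\Oh(d)$ time total.

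The main obstacle is this last \DLINTIME step: it is crucial that each recursive level examine only a bounded number of bits of $i$ rather than comparing the full $\Oh(d)$-bit value of $i$ with an $\Oh(d)$-bit cumulative length, so that the algorithm fits in linear time. Everything else is essentially mechanical: the group-theoretic iteration mirrors that of \cref{lem:GS}, and once the $h_d$ satisfy the hypotheses of \cref{prop:wreathSENS}, the conclusion that $G$ is uniformly SENS follows immediately.
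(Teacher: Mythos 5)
Your proof is correct and follows essentially the same route as the paper: reduce to an element $a\in H$ of order at least $3$ (via $H\wr H\supseteq \Z/2\wr\Z/2$ when necessary), iterate the self-embedding to produce the words $h_d$, and invoke \cref{prop:wreathSENS}. The one genuine deviation is your choice $v=a$, which forces the conjugation $h_d=a^{-1}\psi(h_{d-1})a$ and hence the block-location step in the \DLINTIME argument; the paper instead takes $v$ to be the identity of the cyclic group (i.e.\ $(v_{-1},v,v_1)=(-1,0,1)$), so that $h_d=\sigma^d(t)$ is a single substitution image of length exactly $2^{\lambda d}$ and the $i$-th letter is obtained directly by descending along the base-$2^\lambda$ digits of $i$, with no cumulative-length comparisons at all. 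Your workaround is salvageable (the geometric prefix sums are base-$C$ repunits, so one linear-time arithmetic pass such as computing $i(C-1)+1$ and locating its leading digit identifies the block), but it is an avoidable complication.
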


\begin{proof}
	By possibly replacing $H$ with a cyclic subgroup, we can assume that $H=\Z$ or $H=\Z/p$ for some $p\in \Z$. Moreover, we can assume that $p\geq 3$: if $p=2$, we can use the associativity of the permutational wreath product: $G \wr (\Z/2 \wr \Z/2) = (G \wr \Z/2) \wr \Z/2 \leq G \wr \Z/2 \leq G$.
	Thus, since $\Z/2 \wr \Z/2$ contains an element of order $4$, we have  $G \wr \Z/4 \leq G $.
	Hence, we have $G \wr H \leq G$ for $H = \Z$ or $H = \Z/p$ with  $p \geq 3$. Let $t$ be a generator of $H$ and $\Sigma$ be a standard generating set for $G$. \Wlog we can assume that $t \in \Sigma$.
	
	Now, consider the endomorphism $\sigma: G\to G$ given by the embedding $G \wr H \leq G$. 
	After padding with the appropriate number of $1$'s, we can view $\sigma$ as a
	substitution $\sigma : \Sigma\to\Sigma^{2^\lambda}$ for some constant $\lambda$.
	We then define words $h_d
	=\sigma^d(t)$ for all $d\in\N$, and note that
	$|h_d| = 2^{\lambda d}$. It is straightforward to see that on input of $1^d$ and a binary coded number $i$ one can compute in \DLINTIME the $i$-th letter of $h_d$.
	Moreover, it follows that $\langle h_0, \dots, h_k\rangle$ is the $k$-fold iterated wreath product of cyclic groups and so 
	$\langle h_0,h_1,\dots\rangle\cong(\cdots\wr\Z)\wr \Z$ or $\langle h_0,h_1,\dots\rangle\cong(\cdots\wr(\Z/p))\wr (\Z/p)$, which acts on the rooted tree $X^*$ with $X  = H$ in the canonical way. We then apply \cref{prop:wreathSENS} with $(v_{-1},v,v_1)=(-1,0,1)$ (resp.\ $(v_{-1},v,v_1)=(p-1,0,1)$).
\end{proof}
As an immediate consequence of \cref{thm:wreathSENS} and \cref{lem:GS}, we obtain:

\begin{corollary}\label{cor:Thompson}
	Thompson's groups $F < T < V$ are uniformly SENS.
\end{corollary}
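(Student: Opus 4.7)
The plan is to combine the two cited ingredients in the obvious way, then extend the conclusion from $F$ to $T$ and $V$ via the subgroup lemma.

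First, I would invoke \cref{lem:GS} to obtain an embedding $F \wr \Z \hookrightarrow F$. Applying \cref{thm:wreathSENS} with $G = F$ and the non-trivial group $H = \Z$, we conclude immediately that $F$ is uniformly SENS. This is the essential content of the corollary: the self-similar wreath-product embedding of Guba and Sapir is exactly the kind of input that \cref{thm:wreathSENS} is designed to digest. Note that the hypothesis $H \neq 1$ is satisfied, so no case distinction is needed.

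For Thompson's groups $T$ and $V$, I would use the standard chain $F < T < V$ together with \cref{lem:subquotient}: since $F$ is a subgroup (and hence a subquotient with trivial normal subgroup) of both $T$ and $V$, and $F$ is uniformly SENS, both $T$ and $V$ inherit the uniform SENS property. Strictly speaking \cref{lem:subquotient} is stated for finitely generated subquotients, which is fine since $F$ is finitely generated (e.g.\ by $x_0, x_1$ from \eqref{eq:F}).

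I do not anticipate any real obstacle here; the work has already been done in \cref{thm:wreathSENS} and \cref{lem:GS}. The only minor point to double-check is that the standard generating set of $F$ inside $T$ (resp.\ $V$) can be taken to lie inside the standard generating set of $T$ (resp.\ $V$), as required for \cref{lem:subquotient}; this is allowed by \cref{lem:indepgens}, which guarantees that the uniform SENS property is independent of the choice of standard generating set. Thus the whole argument is a two-line deduction, and the corollary follows.
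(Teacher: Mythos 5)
Your argument is exactly the paper's: the corollary is stated there as an immediate consequence of \cref{thm:wreathSENS} applied to the embedding $F\wr\Z\le F$ from \cref{lem:GS}, with $T$ and $V$ inheriting the property via \cref{lem:subquotient} from the chain $F<T<V$. The proposal is correct and requires no changes.
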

One can also show  \cref{cor:Thompson} directly without using  \cref{prop:wreathSENS}.
Consider the infinite presentation \eqref{eq:F}. From the relations $x_i^{-1} x_k x_i = x_{k+1}$ ($i < k$)
the reader can easily check that $g = x_3 x_2^{-1}$ satisfies the identity
\[ g = [g,g^{x_0^{-1}}]^{x_1} = [g^{x_1}, g^{x_0^{-1}x_1}]. \]
Nesting this identity $d$ times and pushing conjugations to the leaf level of the resulting tree yields
the words $g_{d,v}$. More precisely, let us define words $c_v$ ($v \in \{0,1\}^*$) by 
$c_{\eps} = \eps$, $c_{v0} = x_1 c_v$, and $c_{v1} = x_0^{-1} x_1 c_v$. We then define 
$g_{d,v} = g^{c_v}$ for $v \in \{0,1\}^{\le d}$ and immediately get $g_{d,v} = [g_{d,v0}, g_{d,v1}]$ in $F$. 
Clearly, the word $c_v$ can be computed in $\DTIME(\Oh(|v|))$. Hence, $g_{d,v}$ can be computed
in $\DTIME(\Oh(d))$.

\begin{theorem}\label{thm:weaklybranched}
  Let $G$ be a weakly branched self-similar group, and assume that it
  admits a finitely generated branching subgroup $K$. Then $K$ and hence $G$ are
  uniformly SENS.
\end{theorem}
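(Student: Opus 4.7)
The plan is to apply \cref{prop:wreathSENS} with $K$ itself playing the role of the ambient group $G$ of the proposition and the condensed alphabet $X^n$ (for a suitable $n\ge 1$) playing the role of $X$; the conclusion then immediately gives that $K$ is uniformly SENS, and \cref{lem:subquotient} transports this back to the original~$G$. The main obstacle is the first step below, namely exhibiting a sufficiently long cycle of some element of $K$ on some level of the tree; the rest is a routine iterated-substitution computation.

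First I would pick $g\in K$ together with three pairwise distinct letters $v_{-1},v,v_1\in X^n$ (for some $n\ge 1$) satisfying $v_{-1}^g=v$ and $v^g=v_1$. Since $K$ is weakly branched it satisfies no group identity~\cite{Abert05}, so in particular it is non-abelian and therefore contains an element $g$ of order strictly greater than~$2$. The order of the permutation that $g$ induces on the finite set $X^n$ is non-decreasing in $n$ and converges to $\abs{g}$, so at some level $n$ it exceeds~$2$; since a permutation of order more than~$2$ cannot consist only of fixed points and transpositions, $g$ must have a cycle of length at least~$3$ on $X^n$, and three consecutive elements of that cycle serve as $v_{-1},v,v_1$. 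Note that $K$ itself need not have any non-trivial first-level action on $X$---in the Grigorchuk group, for example, the branching subgroup fixes $X$ pointwise---so condensing to $X^n$ is essential here.

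Next I would take $h_d=v^d * g\in K$ and verify the hypotheses of \cref{prop:wreathSENS} for the tree $(X^n)^*$. Iterating the branching inclusion $K^X\le\phi(K)$ $n$ times gives $K^{X^n}\le\phi^n(K)$, so the rigid element $u * k$ at every $u\in (X^n)^*$ and every $k\in K$ lies in $K$; hence $h_d\in K$, and by construction $h_d$ fixes $(X^n)^*\setminus v^d(X^n)^*$, sends $v^dv_{-1}$ to $v^{d+1}$, and sends $v^{d+1}$ to $v^dv_1$. To control the length and to obtain $\DLINTIME$-uniformity, I would fix once and for all a finite generating set $k_1,\dots,k_r$ of $K$ and, for every $i$ and every $x\in X$, a word $w_{i,x}$ over these generators representing $x * k_i\in K$; iterating this substitution $nd$ times turns $g$ into a word of length at most $\abs{g}\cdot\ell^{nd}=2^{\Oh(d)}$ representing $h_d$ (with $\ell=\max_{i,x}\abs{w_{i,x}}$), and the $i$-th letter is recovered in time $\Oh(d)$ by descending $nd$ levels through the substitution tree. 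Applying \cref{prop:wreathSENS} with ambient group $K$ then yields that $K$ is uniformly SENS, and one further appeal to \cref{lem:subquotient} shows that $G$ is uniformly SENS as well.
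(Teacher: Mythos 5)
Your proof follows the same overall plan as the paper's: reduce to \cref{prop:wreathSENS} via $h_d = v^d * g$, where $g\in K$ cyclically permutes three distinct vertices $v_{-1}, v, v_1 \in X^n$ on some level $n$. The only point where you diverge is in producing such a $g$. The paper starts from an arbitrary non-trivial $k\in K$ and, if $k^2=1$, explicitly upgrades it by replacing $k$ with $k\cdot(v*k)$ for a vertex $v$ not fixed by $k$; the orbit of $vv$ under the new element then has length four. You instead argue that $K$ satisfies no group law (Ab\'ert), hence is non-abelian, hence contains an element of order greater than $2$, and then locate a level on which that element acts with a cycle of length at least $3$ via the monotonicity of the induced orders. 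Both routes work: yours is a bit shorter at the cost of a modest black box, the paper's is fully explicit. One small imprecision worth flagging: you call $K$ itself ``weakly branched'', but the paper's definition of weakly branched requires self-similarity, which a branching subgroup need not enjoy. Ab\'ert's theorem only needs that all rigid stabilizers be non-trivial, which holds here since $v*K\le K$ for every $v\in X^*$, so your conclusion stands; but it would be cleaner either to cite that hypothesis directly or simply to observe that a non-trivial $k\in K$ fails to commute with $u*k$ for any vertex $u$ moved by $k$, giving non-abelianness of $K$ in one line. The remainder of your argument --- the rigid elements $v^d*g\in K$, the iterated substitutions, the length bound $\abs{g}\cdot\ell^{nd}=2^{\Oh(d)}$, the $\DLINTIME$ access to letters, and the final appeals to \cref{prop:wreathSENS} and \cref{lem:subquotient} --- matches the paper.
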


\begin{proof}
  Let $K$ be a finitely generated branching subgroup of $G$ and let $X^*$ be the tree on which $G$ acts. 
  Let $\varphi$ as in \eqref{eq-phi}.
  First, we may find an element $k\in K$ and a vertex $v\in X^*$ such that
  $v, v_{-1}\coloneqq v^{k^{-1}}$, and $v_1 \coloneqq v^k$ are
  pairwise distinct. Indeed $K$ contains an element $k\neq 1$. If $k$
  has order $>2$ (possibly $\infty$), then there is a vertex $v$ on
  which it acts as a cycle of length $>2$. If $k^2=1$, then take a vertex $v$
  with $v^k \neq v$. Then the orbit of $vv$ under $k\cdot(v*k)$ has length four,
  so we only have to replace $k$ by $k\cdot(v*k)$ and $v$ by $vv$.
  After replacing $X$ by $X^{\abs v}$, we can assume that
  $v_{-1},v,v_1\in X$.

  Since $\phi(K)$ contains $K^X$, there exists an endomorphism
  $\sigma$ of $K$, given on generators of $K$ by
  $\sigma(g) = \phi^{-1}(1,\dots 1,g, 1,\dots,1)$ with the unique $g$ in
  position $v$. We fix a standard generating set $\Sigma$ for $K$ and
  express $\sigma$ as a substitution $\sigma \colon \Sigma\to \Sigma^*$. By padding its
  images with $1$'s, we may assume that $\sigma$ maps every generator
  to a word of length $2^\mu$ for some fixed $\mu$. Also without loss
  of generality, we may assume that the $k$ from the previous paragraph is a generator.
  In particular, the words $h_d = \sigma^d(k) \in \Sigma^*$ have length
  $2^{\mu d}$, and the letter at a given position of $h_d$ can be computed in
  $\DTIME(\Oh(d))$. We then apply \cref{prop:wreathSENS}.
\end{proof}
For the special case of the Grigorchuk group we give below an alternative
proof for the uniform SENS property. We show that there exist non-trivial
nested commutators of arbitrary depth with individual entries of
bounded (and not merely exponentially-growing) length and computable
in \DLINTIME:
\begin{proposition}\label{thm:grigorchuk_sens}
  Consider in the Grigorchuk group $G=\langle a,b,c,d\rangle$ the elements
  \[x = (abad)^2\quad \text{and} \quad y = x^b = babadabac.\]
  Define recursively elements $z_v\in\oneset{x,y,x^{-1},y^{-1}}$ for all
  $v\in\oneset{0,1}^*$ as follows:
  \begin{itemize}
  \item $z_\eps = x$;
  \item if $z_v$ is defined, then we define $z_{v0}$ and $z_{v1}$ according to the following table:
   \medskip
  \begin{center}
    \begin{tabular}{|c@{\hskip\tabcolsep\vrule width 1pt\hskip\tabcolsep}c|c|}
      $z_v$ & $z_{v0}$ & $z_{v1}$ \\ \thickhline 
            $x$ & $x^{-1}$   & $y^{-1}$  \\ \hline
   $x^{-1}$ & $y^{-1}$ & $x^{-1}$  \\ \hline
          $y$ & $y$ & $x$ \\ \hline
  $y^{-1}$ & $x$ & $y$  
   \end{tabular}
 \end{center}
 \medskip
  \end{itemize}
  For every $d\in\mathbb N$  and $v \in \{0,1\}^{\le d}$ let 
  $g_{d,v} = z_v$ for $\abs v = d$ and $g_{d,v} = [g_{v0},g_{v1}]$ if $\abs v<d$.
  We then have $g_{d,\eps} \neq 1$ in $G$.
  In particular, $G$ is uniformly SENS.
\end{proposition}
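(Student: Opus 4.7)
The proof has two parts: establishing the nontriviality $g_{d,\eps}\ne 1$ in $G$, and checking the remaining conditions of Definition~\ref{def:SENS} to conclude uniform SENS. For the latter, pad each of $x$ and $y$ with the identity letter $1\in\Sigma$ to a common length $2^\mu$ (any $\mu\ge 4$ works, since $\abs x\le 8$ and $\abs y\le 9$); then $\abs{g_{d,v}} = 2^{\mu d + 2(d-\abs v)}$, satisfying clause~\textit{(a)}, while clause~\textit{(b)} holds by construction. The uniformity clause~\textit{(d)} is handled as follows: the map $v\mapsto z_v$ is computed by a deterministic finite automaton with four states $\{x,x^{-1},y,y^{-1}\}$ on input $v\in\{0,1\}^*$, hence is \DLINTIME\ in $\abs v$. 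To output the $i$-th letter of $g_{d,v}$ with $\abs v<d$, unfold commutators via $[a,b]=a^{-1}b^{-1}ab$: the four factors at each nesting level are distinguished by $O(1)$ bits of $i$, together with a sign flip when we enter an inverse factor. After $d-\abs v$ recursive steps (each constant-time) we arrive at a padded occurrence of some $z_{v'}$ with $\abs{v'}=d$; extracting the required letter is then a table lookup, and the total time is $O(d)$, i.e., \DLINTIME.

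For nontriviality, we exploit the self-similar structure of $x$ and $y$. A direct computation using the recursions $\phi(a)=(0,1)$, $\phi(b)=\pair<a,c>$, $\phi(c)=\pair<a,d>$, $\phi(d)=\pair<1,b>$ yields
\[\phi(x) = \pair<1, (ab)^2>, \qquad \phi(y) = \phi(x^b) = \pair<1, c(ab)^2 c>,\]
so both $x$ and $y$ act trivially on the left subtree. Writing $u=(ab)^2$ and $u'=cuc$, the identity $[\pair<1,p>,\pair<1,q>] = \pair<1,[p,q]>$ forces every $g_{d,v}$ with $\abs v<d$ to be supported in the right subtree. Applying $\phi$ one further level to each of the four possible commutators at depth $d-1$ (namely $[x^{-1},y^{-1}]$, $[y^{-1},x^{-1}]$, $[x,y]$, $[y,x]$) shows that each of them actually has the form $\pair<1,\pair<1,\cdot>>$: for instance $[x^{-1},y^{-1}] = \pair<1,[u,u']>$, and a further computation gives $[u,u']=\pair<1,\gamma>$ for an explicit word $\gamma\in G$, and similarly for the other three. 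Iterating, $g_{d,\eps}$ is supported in $1^\ell X^*$ for some $\ell$ growing linearly with $d$, and on this subtree acts as a nested commutator, of depth linear in $d$, of elements drawn from a fixed finite list of explicit words of $G$.

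The plan for showing this deep action is nontrivial is a bookkeeping induction along the commutator tree: one argues that only finitely many pair templates $(p,q)$ can appear in the descent (they form a single orbit of the four-state table together with its images under iteration of $\phi$), and that each such pair consists of non-commuting elements of $G$. Nontriviality at each level of the commutator tree then propagates upward, producing an explicit vertex in $X^*$ at depth $\Theta(d)$ moved by $g_{d,\eps}$, so that $g_{d,\eps}\ne 1$. The main obstacle is to verify the finite list of pairwise non-commutations explicitly; this can be settled by expanding $\phi$ to a sufficient depth (or equivalently, by invoking the decidability of $\WP(G)$ applied to the finite list of candidate commutators), and showing that in each case the two words act differently on a vertex of bounded depth in the tree. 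Combined with the uniformity established above, this yields the claimed uniform SENS property for $G$.
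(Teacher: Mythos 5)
There is a genuine gap in the non-triviality part. Your strategy (compute sections via $\phi$, track where the nested commutator is supported) is the right one, and your uniformity discussion (the $v\mapsto z_v$ automaton, the commutator-unfold in $\DLINTIME$) matches the paper. But you never establish the one fact that makes the inductive bookkeeping close, namely the exact self-similar identities in $G$:
\[
[x,y]=\pair<1,\pair<1,y^{-1}>>,\quad [x^{-1},y^{-1}]=\pair<1,\pair<1,x>>,\quad [y,x]=\pair<1,\pair<1,y>>,\quad [y^{-1},x^{-1}]=\pair<1,\pair<1,x^{-1}>>.
\]
The table defining $z_{v0},z_{v1}$ from $z_v$ is designed so that $[z_{v0},z_{v1}]=\pair<1,\pair<1,z_v>>$, and once this is checked (the paper does it by a GAP computation), $g_{d,v}$ is supported on the subtree below $1^{2(d-\abs v)}$ and acts there as $z_v$; in particular $g_{d,\eps}$ acts as $x\neq 1$ below $1^{2d}$. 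Your proposal stops one step short: you write $[x^{-1},y^{-1}]=\pair<1,[u,u']>$ (incidentally this should be $\pair<1,[u^{-1},(u')^{-1}]>$ with your commutator convention) and then only assert that $[u,u']=\pair<1,\gamma>$ "for an explicit word $\gamma$", without identifying $\gamma$, and hence without seeing that $\gamma$ lands back in $\{x,y,x^{-1},y^{-1}\}$.

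Because of this, the closing argument as you state it is not valid. "Nontriviality at each level propagates upward" is not a correct principle: a commutator $[p,q]$ of nontrivial elements can be trivial, and even non-commuting pairs at one level do not, on their own, tell you what the commutator is, which is what you need to recurse. Likewise, the claim that "only finitely many pair templates can appear in the descent" is exactly what requires proof, and proving it amounts to verifying the four identities above. You flag this as "the main obstacle" and defer it to "expanding $\phi$ to a sufficient depth" — but that verification is the substance of the proposition, and until it is carried out (and, crucially, until you observe that the depth-two sections are again elements of $\{x,y,x^{-1},y^{-1}\}$ in exactly the way the table prescribes), the induction does not close and the nontriviality of $g_{d,\eps}$ is not established.
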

 
\begin{proof}
  That $x\neq1 \neq y$ is easy to check by computing their action on the
  third level of the tree.
  Now the following equations are easy to check in $G$:
  \begin{align*}
    [x,y] &= \pair<1,{\pair<1,y^{-1}>}>,  \\ 
    [x^{-1},y^{-1}] &= \pair<1,{\pair<1,x>}>,  \\ 
    [y,x] &= \pair<1,{\pair<1,y>}>,  \\ 
    [y^{-1},x^{-1}] &= \pair<1,{\pair<1,x^{-1}>}>.  
  \end{align*}
  In other words: $[z_{v0},z_{v1}] = \pair<1,{\pair<1,z_v>}>$.
  The checks are tedious to compute by hand, but easy in the GAP
  package FR (note that vertices are numbered from $1$ in GAP and from
  $0$ here):
\begin{verbatim}
gap> LoadPackage("fr");
gap> AssignGeneratorVariables(GrigorchukGroup);
gap> x := (a*b*a*d)^2; y := x^b;
gap> Assert(0,Comm(x,y) = VertexElement([2,2],y^-1));
gap> Assert(0,Comm(x^-1,y^-1) = VertexElement([2,2],x));
gap> Assert(0,Comm(y,x) = VertexElement([2,2],y));
gap> Assert(0,Comm(y^-1,x^-1) = VertexElement([2,2],x^-1));
\end{verbatim}
  We wish to prove that $g_{d,\eps} \neq 1$ in $G$. Now the equation  $[z_{v0},z_{v1}] = \pair<1,{\pair<1,z_v>}>$
  immediately implies that $g_{d,v}$ acts as $z_v$ on the subtree
  below vertex $1^{2(d-\abs v)}$
  and trivially elsewhere. In particular,
  $g_{d,\eps}$ acts as $z_\eps=x \neq 1$ on the subtree below vertex
  $1^{2d}$ and is non-trivial.
 	
  With this definition, the $g_{d,v}$ satisfy the definition of a SENS group. 
  Moreover, given some $v\in \oneset{0,1}^d$, $g_{d,v}$ can be
  computed in time $\Oh(d)$ by a deterministic finite state automaton with state set 
  $\oneset{x^{\pm1},y^{\pm1}}$.
\end{proof}

\section{Efficiently non-solvable groups have \Nc1-hard word problem}
We are ready to state and prove our generalization of Barrington's theorem, namely that SENS groups have
\Nc1-hard word problems, both in the non-uniform and uniform setting. We start
with the non-uniform setting.

\begin{theorem}\label{thm:NC1hard}
  Let $G$ be efficiently non-solvable and let $\Sigma$ be a finite standard
  generating set for $G$. Then every language in \Nc1 can be recognized by a family of 
  $(G,\Sigma)$-programs of polynomial length. In particular, $\WP(G)$
  is hard for \Nc{1} under projection reductions as well as
  \Ac0-many-one-reductions.
\end{theorem}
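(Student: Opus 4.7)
\bigskip
\noindent\emph{Proof proposal.}
The plan is to carry out a Barrington-style simulation on the {\sf nand}-tree, where the role of Barrington's five-cycle is now played by the elements $g_{d,v}$ provided by the SENS structure. First, by \cref{lem:ENSimpliesSENS} it suffices to assume $G$ is SENS (so that we have, for every $d$, the collection of words $g_{d,v}$ for $v\in\{0,1\}^{\le d}$ satisfying $g_{d,v}=[g_{d,v0},g_{d,v1}]$ and $g_{d,\eps}\neq_G 1$). Given $L\in\Nc1$, \cref{lem:layeredNAND} produces a (non-uniform) family $(C_n)_{n\ge 0}$ of balanced {\sf nand}-tree circuits of depth $d=d(n)\in \Oh(\log n)$, whose leaves are labelled by triples $\langle i,a,b\rangle$ in the sense of \cref{nand-tree-circuit}. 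For this $d$, fix the SENS-words $g_{d,v}$ and write $g_v$ for $g_{d,v}$.

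The construction proceeds by induction on the height $d-\abs v$, building for every gate $v\in\{0,1\}^{\le d}$ and every $b\in\{0,1\}$ a $(G,\Sigma)$-program $P_v^b$ on input length $n$ satisfying the invariant
\[
  P_v^b[x] \;=_G\; \begin{cases} g_v & \text{if }\mathrm{val}(v,x)=b,\\ 1 & \text{otherwise.}\end{cases}
\]
At a leaf $v$ with query $\langle i,a,b\rangle$ and $g_v=s_1\cdots s_m$ (of length $m=2^{\mu d}$), the program $P_v^1$ is the sequence of $m$ instructions $\langle i, s_j^{[a]}, s_j^{[b]}\rangle$, where $s_j^{[1]}=s_j$ and $s_j^{[0]}=1$; and $P_v^0$ is defined symmetrically. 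At an internal gate $v$ with children $v0,v1$, set
\[
  P_v^0 \;=\; [P_{v0}^1,\,P_{v1}^1], \qquad P_v^1 \;=\; g_v\cdot (P_v^0)^{-1},
\]
where $[\cdot,\cdot]$ and the inverse of a program are formed on the instruction level (concatenation and reversal with swapped/inverted letters). Using $g_v=[g_{v0},g_{v1}]$, a case analysis on the four combinations of children-values shows that $P_v^0[x]=g_v$ iff both children evaluate to $1$, i.e.\ iff $\mathrm{val}(v,x)=0$, and then $P_v^1=g_v\cdot(P_v^0)^{-1}$ gives the required flip so that the invariant holds for $P_v^1$ as well.

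The resulting programs have polynomial length: if $\ell(k)$ denotes the length of $P_v^1$ at a gate of height $k$, the recursion reads $\ell(0)=2^{\mu d}$ and $\ell(k)=2^{\mu d+2k}+4\ell(k-1)$ (using $\abs{g_{d,v}}=2^{\mu d+2(d-\abs v)}=2^{\mu d+2k}$ noted after \cref{def:SENS}), which telescopes to $\ell(k)=(k+1)\cdot 2^{\mu d+2k}$. For $k=d=\Oh(\log n)$ this is $\Oh(\log n)\cdot n^{\mu+2}=\mathrm{poly}(n)$. Finally, take $P_n\coloneqq P_\eps^1$: since $g_\eps\neq_G 1$, one has $P_n[x]=_G 1$ iff $\mathrm{val}(\eps,x)=0$. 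Applying the construction to the circuit family computing the complement of $L$ (which also lies in $\Nc1$) yields a polynomial-length $(G,\Sigma)$-program family recognizing $L$, and the projection and $\Ac0$ reductions to $\WP(G)$ then follow directly from \cref{rem:Gprogramprojection}.

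The only real obstacle is the verification of the inductive invariant together with the length bound; everything else is bookkeeping. The fact that the length recursion closes off to a polynomial is exactly where the exponential bound $\abs{g_{d,v}}\le 2^{\Oh(d)}$ built into the SENS definition is essential, and where any group failing to be (even) ENS (as in \cref{ex:nonENS}) would derail the argument.
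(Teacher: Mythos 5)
Your proof is correct and follows essentially the same Barrington-style construction as the paper: reduce to SENS via \cref{lem:ENSimpliesSENS}, get a balanced {\sf nand}-tree via \cref{lem:layeredNAND}, and build $G$-programs inductively using the commutator identity $g_v=[g_{v0},g_{v1}]$ so that each gate program evaluates to $g_v$ or $1$ according to the gate's truth value. The only (cosmetic) differences are that you package the recursion symmetrically as $P_v^0,P_v^1$ with instruction-level inverses rather than the paper's $P_v, P_v^{-1}$, and you work out the length recursion explicitly.
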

Note that for the second statement we need the padding letter $1$ in
the generating set for $G$; otherwise, we get a \TC-many-one
reduction.

The proof of \cref{thm:NC1hard} essentially follows Barrington's proof
that the word problem of finite non-solvable groups is \Nc1-hard
\cite{Barrington89}. The crucial observation here is that it suffices
to construct for every gate $v$ only one $G$-program (plus one for the
inverse) which evaluates to $g_{d,v}$ or to $1$ depending on the truth
value $v$ evaluates to, where $g_{d,v}$ is from \cref{def:SENS}.

Also note that Barrington uses conjugates of commutators in his proof and iterates this process. 
However, since $z^{-1}[x,y]z = [z^{-1}xz,z^{-1}yz]$ in every group, the conjugating elements can be pushed through to the inner-most level.

\begin{proof}
  By Lemma~\ref{lem:ENSimpliesSENS} it suffices to prove the statement for the case where
  $G$ is strongly efficiently
  non-solvable. 
	
  Given a language $L$ in \Nc1, we start by
  constructing a family of $G$-programs for $L$. For this let $(C_n)_{n \in \N}$
  be an \Nc1 circuit family for $L$. Let us fix an input length $n$ and write
  $C = C_n$. 
  Since \Nc1 is closed under complementation, we can assume that for every input word $x \in \{0,1\}^n$, we have $x \in L$ if and only if the output gate of the circuit $C$ evaluates to $0$ on input $x$.  
  By \cref{lem:layeredNAND} we may assume that $C$ is a
balanced  \nand-tree-circuit of depth $d \in \Oh(\log n)$ with each leaf
  labelled by a possibly negated input variable or constant via the input mapping $q_n\colon \oneset{0,1}^{d} \to \ccinterval{1}{n} \times \oneset{0,1} \times \oneset{0,1}$.
  All non-leaf gates  are  {\sf nand}-gates.
	
  For each gate $v \in \oneset{0,1}^{\leq d}$ let
  $g_v = g_{d,v}$ as in \cref{def:SENS}. We construct two $G$-programs $P_v$
  and $P_v^{-1}$ (both of input length $n$) such that for every input $x \in \{0,1\}^n$
  ($x$ is taken as the input for $C$,  $P_v$, and $P_v^{-1}$) we have
  \begin{align}\label{eq:pvdef}
    P_v[x] &=_G \begin{cases}
      g_v &\text{if } v \text{ evaluates to } 1,\\
      1 &\text{if } v \text{ evaluates to } 0, 		
    \end{cases} 
  \end{align}
  and $P_v^{-1}[x] = P_v[x]^{-1}$ in $G$.  Notice that we have
  $g_v P_v^{-1}[x] = g_v$ if $v$ evaluates to $0$ and $g_v P_v^{-1}[x] = 1$,
  otherwise. Thus, $g_v P_v^{-1}$ is a $G$-program for the ``negation''
  of $P_v$. Moreover, by \cref{eq:pvdef}, $P_\eps$ evaluates to $1$ on input $x$ if and only if the output gate evaluates to $0$ which by our assumption was the case if and only if  $x \in L$.
	
  The construction of the $P_v$ and $P_v^{-1}$ is straightforward:
  For an input gate $v \in \oneset{0,1}^{d}$ we simply define $P_v$ to be a $G$-program evaluating to $g_v$ or $1$~-- in which case it evaluates to which element depends on $q_n(v)$. 
  More precisely, write $g_v = a_1\cdots a_m$ with $a_i \in \Sigma$. If $q_n(v) = \langle i, a,b\rangle$ for $i \in \ccinterval{1}{n}$ and $a,b \in \oneset{0,1}$, we set 
  $P_v =  \langle i, a_1^a,a_1^b\rangle \cdots  \langle i, a_m^a,a_m^b\rangle$ and $P_v^{-1} =  \langle i, a^{-a}_m,a^{-b}_m\rangle \cdots  \langle i, a^{-a}_1,a^{-b}_1\rangle$.
  For a {\sf nand}-gate $v$ with
  inputs from $v0$ and $v1$, we define
  \begin{align*}
    P_v &= g_v [P_{v1}, P_{v0}]=g_v  P_{v1}^{-1}  P_{v0}^{-1} P_{v1} P_{v0}, \\
    P_v^{-1} &=  [P_{v0}, P_{v1}] g_v^{-1}= P_{v0}^{-1} P_{v1}^{-1}  P_{v1} P_{v1} g_v^{-1},
  \end{align*}
  where the $g_v$  and $g_v^{-1}$ represent constant
  $G$-programs evaluating to $g_v$ and $g_v^{-1}$, respectively, irrespective of the actual input
  (such constant $G$-programs consist of triples of the form $\langle 1, a,a\rangle$ for $a \in \Sigma$). These
  constant $G$-programs are defined via the commutator identities
  $g_v = \bigl[ g_{v0},\, g_{v1} \bigr]$ for $v \in \oneset{0,1}^{< d}$ in
  \cref{def:SENS}.

  Clearly, by induction we have $ P_v[x]^{-1} = P_v^{-1}[x]$ in $G$ (for every input $x$). Let us
  show that \cref{eq:pvdef} holds: For an input gate
  $v \in \oneset{0,1}^{d}$, \cref{eq:pvdef} holds by definition. Now, let
  $v \in \oneset{0,1}^{< d}$. Then, by induction, we have the following equalities in $G$:
  \begin{align*}
    P_v[x] = g_v  [P_{v1}[x], P_{v0}[x]]
    &= 	\begin{cases}
      g_v  & \text{if } v0 \text{ or }  v1 \text{ evaluates to } 0, \\
      g_v [ g_{v1},\,  g_{v0} ] & \text{if } v0 \text{ and }  v1 \text{ evaluate to } 1, \\
    \end{cases}\\
    &= 	\begin{cases}
      g_v  & \text{if } v \text{ evaluates to } 1, \\
      1 & \text{if } v \text{ evaluates to } 0.		
    \end{cases}
  \end{align*}
  Note that $[ g_{v1},\,  g_{v0}] = [ g_{v0},\,  g_{v1}]^{-1} = g_v^{-1}$ for the last equality.
  Thus, $P_v$ satisfies \cref{eq:pvdef}. For $P_v^{-1}$ the analogous
  statement can be shown with the same calculation. 
  For a leaf $v \in \{0,1\}^d$, we have $\abs{g_v} \in 2^{\mathcal{O}(d)} = n^{\mathcal{O}(1)}$
  by Condition~(\ref{SENSa}) from \cref{def:SENS} (recall that $d \in \mathcal{O}(\log n)$).
  Hence, $P_v^{-1}$ and $P_v$ have polynomial
  length in $n$. Finally, also $P_\eps$ has polynomial length in $n$ (with the same
  argument as for $g_\eps$; see the remark after \cref{def:SENS}).

  The fact that $\WP(G)$ is \Nc1-hard under projection reductions as well as
  \Ac0-many-one-reductions follows now form \cref{rem:Gprogramprojection}.
\end{proof}

\begin{remark}
The above construction also shows that from a given Boolean formula (i.e., a tree-like circuit that is given as an expression) 
$F$ with variables $x_1, \ldots, x_n$ one can compute in \L a $G$-program $P$ with input length $n$ such that
for every $x = a_1 \cdots a_n \in \{0,1\}^n$ we have $P[x] = 1$ if and only if $F$ evaluates to true
when variable $x_i$ receives the truth value $a_i$ for $1 \leq i \leq n$.
To show this, one first has to balance $F$ in the sense that $F$ is transformed into 
an equivalent Boolean formula of depth $\mathcal{O}(\log |F|)$. This can be done even in $\Tc{0}$ \cite{GanardiL19}.
This fact will be used in the forthcoming paper \cite{FGLZ20}.
\end{remark}

\begin{theorem}\label{thm:uniNC1hard}
  Let $G$ be uniformly (strongly) efficiently non-solvable and
  $\Sigma$ be a finite standard generating set of $G$. Then every language in
  \ALOGTIME can be recognized by a uniform family of polynomial length
  $(G,\Sigma)$-programs. In particular, $\WP(G)$ is hard for \ALOGTIME
  under uniform projection reductions (thus, also under \DLOGTIME-reductions). 
\end{theorem}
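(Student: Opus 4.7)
The plan is to mirror the proof of \cref{thm:NC1hard} and thread the uniformity conditions through the construction. First I would take $L \in \ALOGTIME$ and invoke \cref{lem:layeredNANDuniform} to obtain a family of balanced \nand-tree-circuits of depth $d=d(n) \in \Oh(\log n)$ whose unary depth $1^{d(n)}$ and query mapping $q_n$ are strongly \DLOGTIME-computable. For each gate $v\in\{0,1\}^{\le d}$ I would define recursively the $(G,\Sigma)$-programs $P_v$ and $P_v^{-1}$ exactly as in the non-uniform proof: at a leaf the instructions come directly from $g_{d,v}$ and $q_n(v)$; at an internal node
\[
  P_v = g_v\, P_{v1}^{-1} P_{v0}^{-1} P_{v1} P_{v0}, \qquad
  P_v^{-1} = P_{v0}^{-1} P_{v1}^{-1} P_{v1} P_{v0}\, g_v^{-1},
\]
where the factors $g_v$ and $g_v^{-1}$ are realised as constant programs (instructions of the form $\langle 1, c, c\rangle$) listing the letters of $g_{d,v}$ or its formal inverse. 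The length $M_k$ of $P_v^{\pm}$ depends only on $k=|v|$, and the recurrence $M_d=2^{\mu d}$, $M_k = L_k + 4 M_{k+1}$ with $L_k = 2^{\mu d+2(d-k)}$ solves to $M_k = (d-k+1)\,2^{\mu d+2(d-k)}$, so $|P_\eps|$ is polynomial in $n$. Padding with instructions $\langle 1, 1, 1\rangle$ brings the length to exactly $2^{\tilde d(n)}$ for some $\tilde d(n)=\Oh(\log n)$ that is strongly \DLOGTIME-computable from $1^n$.

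Correctness that $P_\eps$ evaluates to $1$ in $G$ iff $x\in L$ is proved by the same induction on gate depth as in \cref{thm:NC1hard}, so the essential new work is to exhibit a strong \DLOGTIME\ algorithm that, on input $1^n$ and a binary index $j\in\{0,1\}^{\tilde d(n)}$, outputs the $j$-th instruction of the padded $P_\eps$. I would do this with two successive descents. The first descent maintains a triple $(v,s,j')$ starting at $(\eps,+,j)$: at each step it compares $j'$ against the known offsets to decide in which of the five constituents $g_v^s$, $P_{v0}^{\pm}$, $P_{v1}^{\pm}$ of $P_v^s$ the index $j'$ lies, and updates $v$, $s$, and $j'$ accordingly. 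Since $|v|$ grows by one per step, this loop terminates after at most $d$ iterations. If the descent ends at a leaf (so $|v|=d$), then the instruction is assembled from $q_n(v)$ together with the appropriate letter of $g_{d,v}$, accessed via condition~(\ref{SENSu}) of uniform SENS. If instead it lands inside a constant $g_v^{\pm}$ block at an internal node, a second $\Oh(d)$-step descent through the commutator tree $g_{d,v}=[g_{d,v0},g_{d,v1}]$ locates the relevant leaf word $g_{d,w}$ with $|w|=d$, together with a sign coming from the commutator expansion, after which condition~(\ref{SENSu}) supplies the required letter.

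The main obstacle is verifying that both descents actually fit inside the \DLOGTIME\ budget. All arithmetic involved operates on $\Oh(\log n)$-bit quantities: the comparisons, subtractions, and multiplications by the scalar $d-k+1$ of size $\Oh(\log\log n)$, together with shifts by the \DLOGTIME-computable exponent $\mu d + 2(d-k)$, are routine on a random-access Turing machine in $\Oh(\log n)$ time. Queries to $q_n$ and to the SENS letters are by hypothesis strongly \DLOGTIME- respectively \DLINTIME-computable, and a \DLINTIME-machine on an $\Oh(\log n)$-bit argument completes in $\Oh(\log n)$ steps. Combining these facts yields a strongly \DLOGTIME-computable map from $(1^n,j)$ to the instruction $\langle i_j, b_j, c_j\rangle$, which is exactly the uniformity condition for the family $(P_n)_{n\ge 0}$. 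The "in particular" clause then follows from \cref{rem:Gprogramprojection}, which converts a uniform family of polynomial-length $(G,\Sigma)$-programs for $L$ into a uniform projection reduction from $L$ to $\WP(G)$, and hence also into a \DLOGTIME-reduction.
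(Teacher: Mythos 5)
Your overall strategy is the paper's: reuse the recursion from \cref{thm:NC1hard} and show that the map sending $(1^n,j)$ to the $j$-th instruction of $P_\eps$ is strongly \DLOGTIME-computable by descending through the recursive structure of the program. The gap is in the first descent. With your block structure $P_v = g_v\,P_{v1}^{-1}P_{v0}^{-1}P_{v1}P_{v0}$, the five constituents at level $k$ have lengths $L_k = 2^{\mu d + 2(d-k)}$ and $M_{k+1} = (d-k)\,2^{\mu d + 2(d-k-1)}$, which are neither equal to one another nor (in the case of $M_{k+1}$) powers of two. Hence at each level the machine must compare the running offset $j'$ against $L_k,\ L_k+M_{k+1},\ \dots,\ L_k+3M_{k+1}$ and subtract the relevant boundary; each such comparison or subtraction on a Turing machine touches $\Theta(\log n)$ bits, and there are $d=\Theta(\log n)$ levels, so the descent costs $\Theta(\log^2 n)$, not $\Oh(\log n)$. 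Appending padding at the end of $P_\eps$ to reach total length $2^{\tilde d(n)}$ does not help, because the internal block boundaries remain misaligned with the bit positions of $j$. Your final paragraph, which asserts that the arithmetic is "routine \dots in $\Oh(\log n)$ time" and that "combining these facts" gives \DLOGTIME, glosses over precisely this accumulation over $\Theta(\log n)$ iterations.

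The paper's proof repairs this by padding \emph{inside} the recursion rather than at the end: besides $P_v$, $P_v^{-1}$, $g_v$, $g_v^{-1}$ it introduces a fifth constant program $1_v$ evaluating to $1$, and arranges that each of these five programs at level $c$ consists of exactly $8$ blocks of equal length taken from level $c+1$, so that each has length exactly $2^{\mu d + 3(d-c)}$. Then the first $3d$ bits of $j$, read three at a time by a constant-size transducer with states $P_\ast, P_\ast^{-1}, g_\ast, g_\ast^{-1}, 1_\ast$, determine the gate $v$ and the block type with no arithmetic at all; only the last $\mu d$ bits (a position inside $g_{d,v}^{\pm1}$) require a single linear-time computation via Condition~(\ref{SENSu}), which fits in the budget. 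Your second descent, through the commutator tree of a constant $g_v$ block, is actually unproblematic, since there all four sub-blocks do have equal power-of-two length; it is the descent through the $P$-blocks that needs the extra padding. Incorporating the $1_v$ programs and the eight-equal-blocks layout closes the gap; the rest of your argument (correctness, polynomial length, and the reduction via \cref{rem:Gprogramprojection}) matches the paper.
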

Notice that again for this theorem we need the padding letter $1$ in
$\Sigma$ and that all letters of $\Sigma$ are encoded using the same
number of bits; otherwise, we get a \TC-many-one reduction.

The proof of \cref{thm:uniNC1hard} is conceptually simple, but the
details are quite technical: We know that \ALOGTIME is the same as
\DLOGTIME-uniform \Nc{1}, so we apply the construction of \cref{thm:NC1hard}. By
a careful padding with trivial $G$-programs, we can ensure that from
the binary representation of some index $i$, we can read in \DLOGTIME
the input gate of the \Nc{1}-circuit on which the $i$-th instruction
in the $G$-program depends (this is the main technical part of the
proof). Then the theorem follows easily from the requirements of
being uniformly SENS and from the special type of \DLOGTIME-uniformity of
the circuit shown in \cref{lem:layeredNANDuniform}.

\begin{proof}
  We only prove the statement for the case where $G$ is uniformly SENS;
  the general case is more technical, but follows exactly the same outline.
  
  By \cref{thm:NC1hard}, we know that every language $L$ in \ALOGTIME can
  be recognized by a family of polynomial length
  $(G,\Sigma)$-programs. It remains to show that the construction of
  the $G$-programs is uniform.  In order to do so, we refine the
  construction of \cref{thm:NC1hard}.
	
 Fix a constant $\mu$ such that 
  for all $v \in \oneset{0,1}^d$ the word $g_v = g_{d,v}$ has
  length $2^{\mu d}$.
  We start with an \ALOGTIME-machine $M$. By
  \cref{lem:layeredNANDuniform}, we can assume that the balanced {\sf nand}-tree-circuit family $(C_n)_{n\in \N}$ in the proof of
  \cref{thm:NC1hard} is \DLOGTIME-uniform in the sense that the depth function $1^n \mapsto 1^{d(n)}$ as well as the input mapping $q$
  from \cref{nand-tree-circuit} can be strongly computed in \DLOGTIME.	
  Fix an input length $n$ and let $d = d(n)$ be the depth of the circuit $C = C_n$.
  From $1^n$ we can strongly compute $1^d$ in \DLOGTIME by the above assumptions.
  
  We now follow the recursive definition of the $G$-programs $P_v$ and
  $P_v^{-1}$ from the proof of \cref{thm:NC1hard}.  In order to have a nicer
  presentation, we wish that all $G$-programs corresponding to one
  layer of the circuit have the same length. To achieve this, we also
  define the constant $G$-programs $g_v$ and $g_{v}^{-1}$ precisely
  (which evaluate to the recursive commutators from
  \cref{def:SENS}). Moreover, for each $v \in \oneset{0,1}^{\leq d}$ we
  introduce a new constant $G$-program $1_v$ of the same length as $g_v$ which evaluates to
  $1$ in $G$. For $v \in \oneset{0,1}^d$ the program $1_v$ is the instruction $\langle 1, 1, 1\rangle$ repeated $2^{\mu d}$ times. The programs $1_v$ are
  only there for padding reasons and $1_u$ and $1_v$ are the same for
  $\abs{u} = \abs{v}$.
	
  Now the $G$-programs $P_v$, $P_v^{-1}$, $ g_v$, $g_v^{-1}$, and
  $1_v$ corresponding to a gate $v \in \oneset{0,1}^{< d}$ are defined as follows
  (note that each of these programs consists of 8 blocks):
  \begin{eqnarray}
  P_v &=& g_{v0}^{-1}g_{v1}^{-1}g_{v0}g_{v1}\; P_{v1}^{-1} P_{v0}^{-1} P_{v1} P_{v0} \label{eq-P1}\\
  P_v^{-1} &=& P_{v0}^{-1} P_{v1}^{-1} P_{v0}P_{v1}\;  g_{v1}^{-1}g_{v0}^{-1}g_{v1}g_{v0} \label{eq-P2}\\
  g_v &=& g_{v0}^{-1}g_{v1}^{-1}g_{v0}g_{v1}\; 1_{v0}1_{v0}1_{v0}1_{v0} \label{eq-P3}\\
  g_v^{-1} &=&  g_{v1}^{-1}g_{v0}^{-1}g_{v1}g_{v0}\; 1_{v0}1_{v0}1_{v0}1_{v0} \label{eq-P4}\\
 1_v &=& 1_{v0}1_{v0}1_{v0}1_{v0}\; 1_{v0}1_{v0}1_{v0}1_{v0} \label{eq-P5}.
  \end{eqnarray}
  Clearly, these $G$-programs all evaluate as described in the proof
  of \cref{thm:NC1hard} and all programs corresponding to one layer
  have the same length. Moreover, for $v \in \oneset{0,1}^{< d}$ with
  $\abs{v} = c$ the length of the $G$-program $g_v$ is exactly
  $2^{\mu d+3(d - c)}$ and, thus, also the length of $P_v$ and
  $P_v^{-1}$ is exactly $2^{\mu d + 3(d - c)}$. 
  
  For the $G$-program $P_{\eps}$ (which has length $2^{(\mu+3)d}$) we can prove the uniformity condition: Given the string $1^n$
  and a binary coded integer $i \in \cointerval{0}{2^{(\mu+3)d}}$ with $(\mu+3)d \in \Oh(\log n)$ bits, we
  want to compute in \DLOGTIME the $i$-th instruction in $P_\eps$, where
  $P_\eps$ is the $G$-program assigned to the $n$-input
  circuit. Note that \DLOGTIME means time $\Oh(\log n)$ (due to the input $1^n$).
  Since we have computed $1^d$ already in \DLOGTIME, we can check
  in \DLOGTIME whether $i$ has indeed $(\mu+3)d$ bits.
  
Next, given $i$  and $1^n$, the \DLOGTIME-machine goes over the first
$3d$ bits of $i$ and thereby computes an input gate
$v \in \oneset{0,1}^{d}$ of $C$ bit by bit together with one of the five symbols $\sigma \in \{ P_\ast, P_\ast^{-1}, g_\ast, g_\ast^{-1}, 1_\ast \}$.
The meaning of $v$ and $\sigma$ is that 
$\sigma[\ast \to v]$ (which is obtained by replacing $\ast$ by  $v \in \oneset{0,1}^{d}$ in $\sigma$)
is the $G$-program to which the $i$-th instruction in $P_\eps$ belongs to.
The approach is similar to~\cite[Theorem 4.52]{Vollmer99}. We basically run a deterministic finite state transducer
with states $P_\ast, P_\ast^{-1}, g_\ast, g_\ast^{-1}, 1_\ast$ that reads
three bits of $i$ and thereby outputs one bit of $v$. We start with $\sigma = P_\ast$. 
Note each of the $G$-programs $P_v$, $P_v^{-1}$, $g_v$, $g_v^{-1}$, $1_v$ for $|v|<d$ consists of $8 = 2^3$
blocks of equal length. The next three bits in $i$ determine to which block we have to descend. Moreover,
the block determines the next bit of $v$ and the next state. 
Let us give an example:  assume that the current state $\sigma$ is $P_\ast$ and $b \in \{0,1\}^3$ is the next 3-bit block of $i$.
Recall that $P_v = g_{v0}^{-1}g_{v1}^{-1}g_{v0}g_{v1}P_{v1}^{-1} P_{v0}^{-1} P_{v1} P_{v0}$ for $|v|<d$.
The following operations are done:
\begin{itemize}
	\item If $b = 000$, then print $0$ and set $\sigma :=g^{-1}_\ast$ (descend to block $g_{v0}^{-1}$).
	\item If $b = 001$, then print $1$ and set $\sigma :=g^{-1}_\ast$ (descend to block $g_{v1}^{-1}$).
	\item If $b = 010$, then print $0$ and set $\sigma :=g_\ast$ (descend to block $g_{v0}$).
	\item If $b = 011$, then print $1$ and set $\sigma :=g_\ast$ (descend to block $g_{v1}$).
	\item If $b = 100$, then print $1$ and set $\sigma :=P_\ast^{-1}$ (descend to block $P_{v1}^{-1}$).
	\item If $b = 101$, then print $0$ and set $\sigma :=P_\ast^{-1}$  (descend to block $P_{v0}^{-1}$).
	\item If $b = 110$, then print $1$ and set $\sigma :=P_\ast$  (descend to block $P_{v1}$).
	\item If $b = 111$, then print $0$ and set $\sigma :=P_\ast$  (descend to block $P_{v0}$).
\end{itemize}
For other values of $\sigma$ the behavior of the machine is similar and implements the definitions for $P_v^{-1}$, $g_v$, $g_v^{-1}$, and $1_v$ in 
\eqref{eq-P2}--\eqref{eq-P5}.

Assume now that the above $\DLOGTIME$-machine has computed $v\in \oneset{0,1}^{d}$ and $\sigma \in  \{ P_\ast, P_\ast^{-1}, g_\ast, g_\ast^{-1}, 1_\ast \}$.
If $\sigma = 1_\ast$, then the $i$-th instruction of $P_\eps$ is the padding instruction $\langle 1, 1, 1\rangle$. If $\sigma \in \{ P_\ast, P_\ast^{-1}, g_\ast, g_\ast^{-1} \}$,
then the machine reads the last $\mu d$ bits of the binary encoding of $i$.  These $\mu d$ bits are interpreted
as a binary coded position $j$ in $g_{d,v}$ or $g_{d,v}^{-1}$.  Assume that $\sigma \in \{ P_\ast, g_\ast\}$. The machine then computes the $j$-th symbol $a \in \Sigma$ of $g_{d,v}$ in $\DTIME(\Oh(d))$
according to \cref{def:SENS} (and, thus, in \DLOGTIME as $d \in \Oh(\log n)$ and $1^n$ is part of the input)
 and outputs the instruction $\langle 1, a, a \rangle$ in case $\sigma = g_\ast$. 
If $\sigma = P_\ast$, then $q(1^n,v)$ has to be computed, which can be done in $\DLOGTIME$ by \cref{lem:layeredNANDuniform}.
If $q(1^n,v) = \langle k, b,c\rangle$ with $k \in \ccinterval{1}{n}$ and $b,c \in \oneset{0,1}$, the machine then outputs the instruction $\langle k,a^b,a^c\rangle$.
If $\sigma = \{P_\ast^{-1}, g^{-1}_\ast\}$, then we proceed in a similar fashion. Instead of the $j$-th letter of $g_v$ we have to compute
the $j$-the letter of $g_v^{-1}$, which is the inverse of the $(2^{\mu d} - j +1)$-th letter of $g_v$. The binary coding of $2^{\mu d} - j +1$ can 
be computed in time $\Oh(\log n)$ (and hence \DLOGTIME) since subtraction can be done in linear time.
 Thus, we have obtained a \DLOGTIME-uniform  family of $G$-programs for $L$.

 The second part of the theorem (that $\WP(G)$ is hard for \ALOGTIME under uniform projection reductions) follows again from \cref{rem:Gprogramprojection}.
\end{proof}
Corollary~\ref{coro:A} from the introduction is a direct consequence of Corollaries~\cref{cor:Thompson}, \cref{thm:weaklybranched}, and \cref{thm:uniNC1hard}.

Here is another application of \cref{thm:uniNC1hard}: in \cite{KonigL15} it was shown
that for every f.g.~linear solvable group the word problem belongs to \DLOGTIME-uniform \TC.
It was also asked whether for every f.g.~linear group the word problem is in \DLOGTIME-uniform \TC or \ALOGTIME-hard (it might be the case
that \DLOGTIME-uniform \TC = \ALOGTIME).
We can confirm this. Recall that a group $G$ is called $\mathsf{C}_1$-by-$\mathsf{C}_2$ for group classes $\mathsf{C}_1$ and $\mathsf{C}_2$ 
if $G$ has a normal subgroup $K \in \mathsf{C}_1$ such that $G/K \in \mathsf{C}_2$.

\begin{theorem} \label{thm-linear-groups}
For every f.g.~linear group the word problem is  in \DLOGTIME-uniform \TC or \ALOGTIME-hard. More precisely: let $G$ be a f.g.~linear group.
\begin{itemize}
\item If $G$ is finite solvable, then $\WP(G)$ belongs to \DLOGTIME-uniform \ACC.
\item If $G$ is infinite solvable, then $\WP(G)$ is complete for \DLOGTIME-uniform \TC (via uniform \Ac0 Turing reductions).
\item If $G$ is solvable-by-(finite non-solvable), then $\WP(G)$ is complete for  \ALOGTIME (via \DLOGTIME or uniform projection reductions).
\item In all other cases, $\WP(G)$ is \ALOGTIME-hard and in \L.
\end{itemize}
\end{theorem}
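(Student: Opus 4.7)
The four cases arise from combining the Tits alternative~\cite{Tits72} with standard structure theory. By Mal'cev every finitely generated linear solvable group is polycyclic, so any infinite such group contains a copy of $\Z$. By Tits, any f.g.\ linear group that is not virtually solvable contains a rank-$2$ free subgroup. A virtually solvable non-solvable group is precisely a solvable-by-(finite non-solvable) group: if $H \leq G$ is a finite-index solvable subgroup, its normal core $K$ is still finite-index and solvable, and $G/K$ must be non-solvable (else $G$ would be solvable). Hence the four cases exhaust all possibilities, and they are pairwise disjoint.

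The upper bounds in Cases~1 and~2 are known: Barrington--Th\'erien~\cite{bt88jacm} gives the \DLOGTIME-uniform $\ACC$ bound in Case~1, and~\cite{KonigL15} gives the \DLOGTIME-uniform \TC bound in Case~2. For the \TC-hardness in Case~2 I would argue that $\WP(\Z)$ is already \TC-complete (a string over $\{+1,-1\}$ represents $0$ in $\Z$ iff exactly half of its letters are $+1$, a standard \TC-complete counting problem) and then use the trivial substitution reduction $\WP(\Z) \leq \WP(G)$ coming from any embedded copy of $\Z$. The \L-upper bound in Case~4 is~\cite{LiZa77,Sim79}. The \ALOGTIME-hardness in Cases~3 and~4 is obtained from \cref{thm:uniNC1hard}: in Case~4 a rank-$2$ free subgroup is uniformly SENS by \cref{cor:freeSENS}, and in Case~3 the finite non-solvable quotient $G/K$ is uniformly SENS by \cref{lem:finiteSENS}; in both cases \cref{lem:subquotient} propagates the property to $G$.

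The remaining point, and the main technical obstacle, is the \ALOGTIME upper bound in Case~3. I would fix a Schreier transversal $T=\{t_1,\dots,t_r\}$ for the finite-index solvable normal subgroup $K \triangleleft G$ and apply Reidemeister--Schreier rewriting. Given $w = g_1 \cdots g_n$ over the generators of $G$, first run Barrington's construction inside the finite group $G/K$ to compute, in \ALOGTIME, the transversal element $\tau_i \in T$ representing the coset $g_1 \cdots g_i K$ for every prefix. Then $w =_G 1$ iff $\tau_n = 1$ in $G/K$ and the rewritten word
\[ w' \coloneqq \prod_{i=1}^n \tau_{i-1}\, g_i\, \tau_i^{-1} \]
is trivial in $K$. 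Each factor $\tau_{i-1}\, g_i\, \tau_i^{-1}$ lies in $K$ and can be replaced by a fixed-length word over a finite generating set of $K$ via a table look-up, so the $i$-th letter of $w'$ is \ALOGTIME-computable from the input. Since $K$ is finitely generated (as a finite-index subgroup of a f.g.\ group), linear, and solvable, $\WP(K)$ lies in \DLOGTIME-uniform $\TC$ by~\cite{KonigL15}, and this is contained in $\ALOGTIME$ because $\TC \subseteq \Nc{1} = \ALOGTIME$. Composing the rewriting with the decision procedure for $\WP(K)$ yields an \ALOGTIME-algorithm for $\WP(G)$; the delicate point is that this composition stays inside \ALOGTIME, which relies precisely on $\TC \subseteq \ALOGTIME$ and on Barrington's parallel prefix-product algorithm in $G/K$.
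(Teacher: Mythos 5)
Your proposal follows the same overall skeleton as the paper's proof (Tits alternative to split into virtually solvable versus free-subgroup cases, normal core to get a solvable normal subgroup $K$, \cref{cor:freeSENS}, \cref{lem:finiteSENS} and \cref{lem:subquotient} combined with \cref{thm:uniNC1hard} for the hardness statements, and \cite{KonigL15} plus \cite{LiZa77,Sim79} for the upper bounds in the solvable and free cases). The one place where you genuinely diverge is the \ALOGTIME upper bound in Case~3: the paper simply invokes \cite[Theorem~5.2]{Robinson93phd}, which states that $\WP(G)$ is $\Ac{0}$-Turing-reducible to $\WP(K)$ and $\WP(G/K)$, whereas you re-derive that reduction from scratch via Reidemeister--Schreier rewriting (prefix coset representatives computed by iterated multiplication in the finite group $G/K$, table look-up for the factors $\tau_{i-1}g_i\tau_i^{-1}\in K$, then $\WP(K)\in\TC\subseteq\Nc{1}=\ALOGTIME$ by \cite{KonigL15}). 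Your argument is correct and is essentially the content of Robinson's theorem, so it buys self-containedness at the cost of length; just be careful with the attribution, since what you need there is the \emph{upper} bound that iterated products in a finite group are computable in \DLOGTIME-uniform $\Nc{1}$, not Barrington's construction (which is the hardness direction).

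One factual slip: it is not true that every finitely generated linear solvable group is polycyclic (the lamplighter group $(\Z/2)\wr\Z$ is finitely generated, metabelian and linear over $\mathbb{F}_2(t)$ but not polycyclic); Mal'cev's theorem only gives this over $\Z$, or triangularizability up to finite index in general. The consequence you actually use --- that an infinite finitely generated linear solvable group contains a copy of $\Z$ --- is nevertheless true, by Schur's theorem that a finitely generated torsion linear group is finite; alternatively, the \TC-hardness in Case~2 is already part of the completeness statement cited from \cite{KonigL15}, so this step can be outsourced entirely, as the paper does.
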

Note that we can obtain a similar dichotomy for hyperbolic groups: they are either virtually abelian or contain a non-abelian free subgroup. In the first case, the word problem is in 
\DLOGTIME-uniform \TC, in the second case it is \ALOGTIME-hard. 

\begin{proof}
Let $G$ be f.g.~linear. First of all, by \cite{LiZa77,Sim79}, $\WP(G)$ belongs to \L.
By Tits alternative \cite{Tits72}, $G$ either contains a free subgroup of rank 2 or is virtually solvable.
In the former case, $\WP(G)$ is \ALOGTIME-hard by \cref{cor:freeSENS} and \cref{thm:uniNC1hard}.
Let us now assume that $G$ is virtually solvable. Let $K$ be a solvable subgroup of $G$ of finite index.
By taking the intersection of all conjugates of $K$ in $G$, we can assume that $K$ is a normal subgroup of $G$.
If also $G/K$ is solvable, then $G$ is solvable. Hence, $\WP(G)$ is in \DLOGTIME-uniform \ACC (if $G$ is finite) or, by \cite{KonigL15}, complete for 
 \DLOGTIME-uniform \TC (if $G$ is infinite). Finally, assume that the finite group $G/K$ is non-solvable (thus, $G$ is 
solvable-by-(finite non-solvable). By Lemmas~\ref{lem:subquotient} and~\ref{lem:finiteSENS}, $G$ is uniformly SENS, and 
\cref{thm:uniNC1hard} implies that $\WP(G)$ is \ALOGTIME-hard. Moreover, by \cite[Theorem~5.2]{Robinson93phd}, $\WP(G)$ is $\Ac{0}$-reducible 
to $\WP(K)$ and $\WP(G/K)$. The latter belongs to $\ALOGTIME$ and $\WP(K)$ belongs to \DLOGTIME-uniform \ACC if $K$ is finite and
to  \DLOGTIME-uniform \TC if $K$ is infinite (note that $K$ as a finite index subgroup of $G$ is f.g.~linear too).
In all cases, $\WP(G)$ belongs to \ALOGTIME.
\end{proof}

\section{Compressed words and the compressed word problem}

In the rest of the paper we deal with the compressed word problem, which is a succinct version of the word problem, where
the input word is given in a compressed form by a so-called straight-line program. In this section, we introduce  straight-line programs
and the compressed word problem and state a few simple facts. For more details on the compressed word problem see 
\cite{Lohrey14compressed}.

A {\em straight-line program} (SLP for short) over the alphabet $\Sigma$ is a triple $\mathcal{G} = (V,\rho,S)$, where 
$V$ is a finite set of variables such that $V \cap \Sigma = \emptyset$, $S \in V$ is the start variable, and $\rho: V \to (V \cup \Sigma)^*$ is a mapping
such that the relation $\{ (A,B) \in V \times V \colon B \text{ occurs in } \rho(A) \}$
is acyclic. For the reader familiar with context free grammars, it might be helpful to view the SLP
$\mathcal{G} = (V,\rho,S)$ as the context-free grammar $(V,\Sigma, P, S)$, where $P$ contains all productions $A \to \rho(A)$ for $A \in V$.
The definition of an SLP implies that this context-free grammar derives exactly on terminal word, which will be denoted by
$\val(\mathcal{G})$. Formally, one can extend $\rho$ to a morphism $\rho : (V \cup \Sigma)^* \to (V \cup \Sigma)^*$ by setting
$\rho(a)=a$ for all $a \in \Sigma$. The above acyclicity condition on $\rho$ implies that for $m = |V|$ we have $\rho^m(w) \in \Sigma^*$ for all
$w \in (V \cup \Sigma)^*$. We then define $\val_{\mathcal{G}}(w) = \rho^m(w)$ (the string derived from the sentential form $w$) and 
$\val(\mathcal{G}) = \val_{\mathcal{G}}(S)$.

The word $\rho(A)$ is also called the {\em right-hand side} of $A$. 
Quite often, it is convenient to assume that all right-hand sides are of the form $a \in \Sigma$ or $BC$ with $B,C \in V$.
This corresponds to the well-known Chomsky normal form for context-free grammars. There is a simple linear time 
algorithm that transforms an SLP $\mathcal{G}$ with $\val(\mathcal{G}) \neq \varepsilon$ into an SLP $\mathcal{G}'$ in Chomsky normal
form with $\val(\mathcal{G}) = \val(\mathcal{G}')$, see e.g. \cite[Proposition 3.8]{Lohrey14compressed}.

We define the size of the SLP $\mathcal{G} = (V,\rho,S)$ as the total length of all right-hand sides:  $|\mathcal{G}| = \sum_{A \in V} |\rho(A)|$.
SLPs offer a succinct representation of words that contain many repeated substrings. For instance, the word $(ab)^{2^n}$ can be produced
by the SLP $\mathcal{G} = (\{A_0, \ldots,A_n\},\rho,A_n)$ with $\rho(A_0) = ab$ and $\rho(A_{i+1}) = A_i A_i$ for $0 \leq i \leq n-1$.
Plandowski has shown that one can check in polynomial time whether two given SLPs produce the same string.
We need the following upper bound on the length of the word $\val(\mathcal{G})$:

\begin{lemma}[c.f.~\mbox{\cite{CLLLPPSS05}}] \label{lemma-SLP-upper-bound}
For every SLP $\mathcal{G}$ we have $|\val(\mathcal{G})| \leq  3^{|\mathcal{G}|/3}$.
\end{lemma}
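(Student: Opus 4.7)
The plan is to process the variables in topological order and bound the length of the generated string multiplicatively in terms of the lengths of the right-hand sides; the final exponent $1/3$ will then drop out of the pointwise inequality $k \leq 3^{k/3}$ applied term by term.

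First I would normalise $\mathcal{G}$: delete any variable unreachable from $S$, and as long as some right-hand side is empty, delete that variable together with all its occurrences in other right-hand sides. Both operations preserve $\val(\mathcal{G})$ and do not increase $|\mathcal{G}|$, so I may assume that the remaining $m$ variables can be topologically ordered as $A_1,\dots,A_m = S$ (with $\rho(A_i)\in(\{A_1,\dots,A_{i-1}\}\cup\Sigma)^*$) and that $k_i := |\rho(A_i)| \geq 1$ for every $i$. This normalisation is the only real bookkeeping step and is what I expect to be the mildly tedious part of the argument; everything afterwards is a clean product calculation.

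The inductive core goes as follows. Set $\ell_i = |\val_\mathcal{G}(A_i)|$ and $M_i = \max(1,\ell_1,\dots,\ell_i)$ with $M_0 = 1$. Each of the $k_i$ symbols appearing in $\rho(A_i)$ contributes either $1$ (if it is a terminal) or some $\ell_j$ with $j<i$, and both values are at most $M_{i-1}$, giving $\ell_i \leq k_i\, M_{i-1}$. Combined with $M_{i-1}\geq 1$ and $k_i\geq 1$, this yields $M_i \leq k_i\, M_{i-1}$, and iterating from $M_0 = 1$ gives $M_m \leq \prod_{i=1}^m k_i$.

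To close the argument I would invoke the elementary inequality $k\leq 3^{k/3}$ for every positive integer $k$ (equivalently $k^3 \leq 3^k$, immediate for $k\in\{1,2,3\}$ and then by induction). Multiplying over $i$ and using $\sum_i k_i = |\mathcal{G}|$ yields
\[
  |\val(\mathcal{G})| \;=\; \ell_m \;\leq\; M_m \;\leq\; \prod_{i=1}^m k_i \;\leq\; \prod_{i=1}^m 3^{k_i/3} \;=\; 3^{|\mathcal{G}|/3}.
\]
The constant $3$ and the exponent $1/3$ arise solely from the pointwise bound $k\leq 3^{k/3}$, which is saturated at $k=3$; this matches the obvious extremal example in which every right-hand side has length $3$ and the SLP forms a chain.
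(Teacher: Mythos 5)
The paper does not prove this lemma; it simply cites it from \cite{CLLLPPSS05}, so there is no paper-internal proof to compare against. Your argument is correct and is in fact the standard proof of this bound: after pruning variables with empty right-hand sides (and unreachable ones), the derived length is bounded by the product of the right-hand-side lengths, and the claim then follows from the pointwise inequality $k\leq 3^{k/3}$ for positive integers $k$, which is exactly the statement that $\prod_i k_i$ subject to $\sum_i k_i = |\mathcal G|$ is maximised when every $k_i=3$. The only (cosmetic) gap is the degenerate case $\val(\mathcal G)=\eps$: your normalisation step can delete $S$ itself if $\rho(S)$ becomes empty, in which case the topological ordering ``$A_1,\dots,A_m=S$'' no longer makes sense. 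That case should be dispatched up front by observing $|\val(\mathcal G)|=0\leq 1\leq 3^{|\mathcal G|/3}$; after that, your assumption that $S$ survives with $k_i\geq 1$ for all $i$ is justified. Apart from that, the induction $\ell_i\leq k_i M_{i-1}$, the telescoping $M_m\leq\prod_i k_i$, and the final substitution $\sum_i k_i\leq|\mathcal G|$ are all sound.
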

We also need polynomial time algorithms for a few algorithmic problems for SLPs:

\begin{lemma}[\mbox{\cite[Chapter 3]{Lohrey14compressed}}] \label{lemma-basic}
The following problems can be solved in polynomial time, where $\mathcal{G}$ is an SLP over a terminal alphabet $\Sigma$, $a \in \Sigma$, and $p, q \in \mathbb{N}$
(the latter are given in binary notation):
\begin{itemize}
\item Given $\mathcal{G}$, compute the length $|\val(\mathcal{G})|$.
\item Given $\mathcal{G}$ and $a$, compute the number $|\val(\mathcal{G})|_a$ of occurrences of $a$.
\item Given $\mathcal{G}$ and $p$, compute the symbol $\val(\mathcal{G})[p] \in \Sigma$ (in case $0 \le p < |\val(\mathcal{G})|$ does not hold, the algorithm outputs a special symbol).
\item Given $\mathcal{G}$ and $p,q$, compute an SLP for the string $\val(\mathcal{G})[p:q]$ (in case $0 \le p \leq q < |\val(\mathcal{G})|$ does not hold, the algorithm
outputs a special symbol).
\end{itemize}
\end{lemma}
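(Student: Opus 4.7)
The four items can be handled in the order they appear, with each one building on the previous.

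The plan is, for the first two items, to compute $\ell(A) = |\val_\mathcal{G}(A)|$ (respectively $n_a(A) = |\val_\mathcal{G}(A)|_a$) for every variable $A$ in topological order of the dependency relation on $V$. If $\rho(A) = w_1 \cdots w_k$ with $w_i \in V \cup \Sigma$, then $\ell(A) = \sum_{i=1}^k \ell(w_i)$, with $\ell(a)=1$ for $a \in \Sigma$, and analogously for $n_a$. By Lemma~\ref{lemma-SLP-upper-bound} every such number is at most $3^{|\mathcal{G}|/3}$, so its binary representation has size $\Oh(|\mathcal{G}|)$, and each of the $\Oh(|\mathcal{G}|)$ additions can be done in polynomial time. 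The output is $\ell(S)$ (respectively $n_a(S)$).

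For the third item, I would assume after preprocessing that $\ell(A)$ is available for every $A \in V$. Given a query position $p$, maintain a pair $(A,r)$, initially $(S,p)$, meaning ``I need the symbol at position $r$ of $\val_\mathcal{G}(A)$''. If $\rho(A) = w_1 \cdots w_k$, compute the prefix sums $s_0=0$, $s_i = \sum_{j \le i}\ell(w_j)$ and find the unique index $i$ with $s_{i-1} \le r < s_i$; if $w_i = a \in \Sigma$ return $a$, otherwise continue with $(w_i, r-s_{i-1})$. Each step strictly decreases the depth of the current variable in the dependency DAG, so the recursion terminates after at most $|V|$ steps, each of polynomial cost.

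For the fourth item, I would first bring $\mathcal{G}$ into Chomsky normal form (each right-hand side is a single terminal or a product $BC$) in linear time as noted before the lemma, at the cost of at most a constant factor in size. Now given $0 \le p \le q < |\val(\mathcal{G})|$, the natural object is, for each variable $A$, a prefix variable $A^{\vartriangleleft}_{m}$ deriving the length-$m$ prefix of $\val_\mathcal{G}(A)$ and a suffix variable $A^{\vartriangleright}_{m}$ deriving the length-$m$ suffix. I would construct these only along the two root-to-leaf ``boundary paths'' traced out by the descent from the third item for positions $p$ and $q$: on each such path, at a variable $A$ with $\rho(A)=BC$ we either descend into $B$ or into $C$, and in the latter case the prefix/suffix of $A$ is expressed as a concatenation of the entire value of $B$ (or $C$) with the appropriate prefix/suffix of the other child, introducing one new variable. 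Finally the target $\val(\mathcal{G})[p:q]$ is built as a concatenation along the lowest common ancestor of positions $p$ and $q$ in the derivation. Since each boundary path has length at most $|V|$ and introduces $\Oh(1)$ new variables per step, the resulting SLP has polynomial size and is produced in polynomial time.

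The main obstacle is the substring construction: one must be disciplined about which prefix/suffix variables are actually needed so that the new grammar's size stays polynomial, and the Chomsky normal form together with tracing only the two boundary paths (rather than, say, recursively building prefix variables for every subtree) is exactly what makes this work.
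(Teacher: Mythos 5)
The paper states \cref{lemma-basic} as a citation to Chapter~3 of \cite{Lohrey14compressed} without giving a proof, so there is no in-paper argument to compare against. Your proof is correct and matches the standard treatment found in the cited reference: bottom-up dynamic programming (in topological order over the dependency DAG) for lengths and letter counts, with the observation via \cref{lemma-SLP-upper-bound} that all intermediate numbers have $\Oh(|\mathcal G|)$ bits; a top-down descent using the precomputed lengths for random access; and, after normalizing to Chomsky normal form, building prefix/suffix variables only along the two root-to-leaf boundary paths determined by $p$ and $q$, joining them at their lowest common ancestor, so that only $\Oh(|V|)$ new variables of constant right-hand-side length are introduced. The one place you could be a bit more explicit is the final assembly at the branching point (where $\rho(A)=BC$, $p$ descends into $B$, and $q$ descends into $C$, so $\val(\mathcal G)[p:q]$ is the appropriate suffix of $\val_{\mathcal G}(B)$ concatenated with the appropriate prefix of $\val_{\mathcal G}(C)$), and the degenerate case $p=q$ where the descent never branches; both are routine, but spelling them out would close the argument cleanly.
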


\begin{lemma}[c.f.~\mbox{\cite[Lemma 3.12]{Lohrey14compressed}}]  \label{lemma-SLP-iterated-morph}
Given a symbols $a_0 \in \Sigma$ and a sequence of morphisms $\phi_1, \ldots, \phi_n : \Sigma^* \to \Sigma^*$, where every $\phi_i$ is given by a list of the words
$\phi_i(a)$ for $a \in \Sigma$, one can compute in \L an SLP for the word
$\phi_1(\phi_2( \cdots \phi_n(a_0) \cdots))$.
\end{lemma}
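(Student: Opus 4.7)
The plan is to build an SLP whose variables are indexed by pairs (depth, letter), where each variable derives a prefix-composition of the morphisms applied to a single letter. The key identity is the ``outer'' recursion: if we set $\tau_i = \phi_1 \circ \phi_2 \circ \cdots \circ \phi_i$ (so $\tau_0 = \id$ and we want $\tau_n(a_0)$), then
\[ \tau_i(a) = \tau_{i-1}(\phi_i(a)) = \tau_{i-1}(b_1)\,\tau_{i-1}(b_2)\cdots \tau_{i-1}(b_k), \qquad \text{where } \phi_i(a) = b_1 b_2 \cdots b_k. \]
Note that the ``inner'' recursion $\tau_i(a) = \phi_1(\tau_{i-1}'(a))$ (with $\tau_{i-1}' = \phi_2 \circ \cdots \circ \phi_i$) does not factor through a simple concatenation, which is why we index and recurse from the outside inwards.

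Concretely, I would introduce variables $A_{i,a}$ for $0 \le i \le n$ and $a \in \Sigma$, together with the right-hand sides
\[ \rho(A_{0,a}) = a, \qquad \rho(A_{i,a}) = A_{i-1,b_1} A_{i-1,b_2} \cdots A_{i-1,b_k} \text{ where } \phi_i(a) = b_1 b_2 \cdots b_k \text{ for } i \geq 1, \]
and take $A_{n,a_0}$ as the start variable. By induction on $i$, using the identity above, one checks that $\val_{\cG}(A_{i,a}) = \tau_i(a)$, so in particular $\val(\cG) = \tau_n(a_0)$, which is the desired word. The dependency relation among variables is acyclic because the depth index $i$ strictly decreases from left-hand to right-hand sides, so $\cG$ is a valid SLP.

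It remains to verify that $\cG$ can be produced by a logspace transducer. The output is a concatenation, in some fixed order, of the blocks $A_{i,a} \to \rho(A_{i,a})$ for $(i,a) \in \{0,\dots,n\} \times \Sigma$. A logspace machine iterates over all pairs $(i,a)$ using two counters of size $\Oh(\log n) + \Oh(\log|\Sigma|) = \Oh(\log |\text{input}|)$; for each pair, it locates the word $\phi_i(a)$ in the input, scans it letter by letter, and prints $A_{i-1,b}$ for each letter $b$ encountered (the names $A_{i,a}$ themselves can be printed using the two counters). Since $|\Sigma|$ and each $i$ fit in logspace and no additional storage is required, the whole construction runs in deterministic logarithmic space. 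The main (modest) obstacle is simply choosing the correct recursion direction, namely factoring $\tau_i$ as $\tau_{i-1} \circ \phi_i$ rather than $\phi_1 \circ (\phi_2 \circ \cdots \circ \phi_i)$; once this is in place, both correctness and the logspace bound are routine.
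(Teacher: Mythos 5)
Your construction is correct: the recursion $\tau_i(a)=\tau_{i-1}(\phi_i(a))$ together with the variables $A_{i,a}$ (value $\tau_i(a)$) yields a valid acyclic SLP of linear size whose rules a logspace transducer can emit block by block, and the degenerate case $\phi_i(a)=\eps$ is harmless under the paper's definition of SLP. The paper itself offers no proof, merely citing \cite[Lemma~3.12]{Lohrey14compressed}, and the argument there is essentially the construction you give, so there is nothing to add.
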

The {\em compressed word problem} for a finitely generated group $G$ with the finite standard generating set $\Sigma$, $\CompWP(G,\Sigma)$ for short,
is the following decision problem:
\begin{description}
\item[Input:]  an SLP $\mathcal{G}$ over the alphabet $\Sigma$.
\item [Question:] does $\val(\mathcal{G})=1$ hold in $G$?
\end{description}
It is an easy observation that the computational complexity of the compressed word 
problem for $G$ does not depend on the chosen generating set $\Sigma$ in the sense
that if $\Sigma'$ is another finite standard generating set for $G$, then $\CompWP(G,\Sigma)$
is \L-reducible to $\CompWP(G,\Sigma')$ \cite[Lemma~4.2]{Lohrey14compressed}. Therefore we do not
have to specify the generating set and we just write $\CompWP(G)$.

The compressed word problem for $G$ is equivalent to the problem whether a given circuit over the group $G$ evaluates to $1$: Take an SLP $\mathcal{G} = (V,\rho,S)$ in
Chomsky normal form and built a circuit by taking $V$ is the set of gates. If $\rho(A)=a  \in \Sigma$ then $A$ is an input gate that is labelled
with the group generator $a$. If $\rho(A) = BC$ with $B,C \in V$ then $B$ is left input gate for $A$ and $C$ is the right input gate for $A$.
Such a circuit can be evaluated in the natural way (every internal gate computes the product of its input values) and the circuit output is the value
at gate $S$.

From a given SLP $\mathcal{G}$ a $\PSPACE$-transducer can compute the word $\val(\mathcal{G})$.
With Lemma~\ref{lemmaPSPACE} we get:

\begin{lemma} \label{lemma-cwp-PSPACE}
If $G$ is a finitely generated group such that $\WP(G)$ belongs to $\polyL$,
then $\CompWP(G)$ belongs to $\PSPACE$.
\end{lemma}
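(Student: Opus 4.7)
The plan is to view the compressed word problem as the composition of two computations: first expand the SLP into its (exponentially long) string, then run the polyL word problem algorithm on that string. Lemma~\ref{lemmaPSPACE} is tailor-made for exactly this situation, so the task reduces to checking that the expansion step is a \PSPACE-computable function.

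Concretely, I would define $f\colon \mathcal{G} \mapsto \val(\mathcal{G})$, where inputs that are not well-formed SLPs are mapped to some fixed non-trivial word. To see that $f$ is \PSPACE-computable, recall that $|\val(\mathcal{G})| \le 3^{|\mathcal{G}|/3}$ by \cref{lemma-SLP-upper-bound}, so the output length is singly exponential, as required by the definition of \PSPACE-computability. A \PSPACE-transducer produces the output sequentially: it maintains a binary counter $p$ ranging over $0 \le p < |\val(\mathcal{G})|$ (using $\Oh(|\mathcal{G}|)$ bits of work space), and for each value of $p$ it invokes the polynomial-time algorithm from \cref{lemma-basic} to compute $\val(\mathcal{G})[p]$ and writes this symbol on the output tape. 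Since the algorithm of \cref{lemma-basic} runs in polynomial time (hence also in polynomial space) and is invoked exponentially many times with reused work space, the whole transducer uses only polynomial space.

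Having established that $f$ is \PSPACE-computable, I would invoke the hypothesis that $\WP(G) \in \polyL$ and apply \cref{lemmaPSPACE}: $\CompWP(G) = f^{-1}(\WP(G)) \in \PSPACE$. There is no real obstacle here; the only thing to verify carefully is the sequential output property in the definition of a \PSPACE-transducer, but this is immediate from the position-wise access algorithm of \cref{lemma-basic}. Everything else is bookkeeping.
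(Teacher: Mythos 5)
Your proposal is correct and follows exactly the paper's argument: the paper also observes that a \PSPACE-transducer computes $\mathcal{G}\mapsto\val(\mathcal{G})$ and then applies \cref{lemmaPSPACE}. Your additional details (the length bound from \cref{lemma-SLP-upper-bound} and the position-wise symbol extraction from \cref{lemma-basic}) just make explicit what the paper leaves implicit.
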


\section{Compressibly SENS groups}\label{sec:CSENS}

In this section, we present a variant of (uniformly) SENS property that allows to derive
\P-hardness of the compressed word problem.

\begin{definition}\label{def:compSENS}
  We call a group $G$ generated by a finite standard generating set
  $\Sigma$ \emph{compressibly strongly efficiently non-solvable}
  (compressibly SENS) if there is a polynomial $p$ and a collection of words
  $g_{d,i,j} \in \Sigma^*$ for $d \in \N$,
  $0\leq i \leq d$, and $1 \leq j \leq p(d)$ such that
  \begin{enumerate}[(a)]
  \item\label{compSENSa} for all $d \in \N$ and $1 \le j \le p(d)$ there is an SLP of size at most $p(d)$
    evaluating to $g_{d,d,j}$,
  \item\label{compSENSb} for all $d \in \N$, $0\leq i < d$ and $1 \leq j \leq p(d)$ there are
    $k,\ell \in \ccinterval{1}{p(d)}$ such that
    $g_{d,i,j} = \bigl[ g_{d,i+1,k},\, g_{d,i+1,\ell} \bigr]$,
  \item\label{compSENSc} $g_{d,0,1} \neq 1$ in $G$.
  \end{enumerate}
$G$ is called \emph{compressibly efficiently non-solvable}
  (compressibly ENS) if instead of (\ref{compSENSb}), we only require
  \begin{description}
  \item[{\em (b')}] there is some constant $M$ such that for
    all $ d \in N$, $0\leq i < d $ and $1 \leq j \leq p(d)$ there are
    $k_1,\ell_1, \dots, k_M, \ell_M$ such that
    $g_{d,i,j} = \bigl[ g_{d,i+1,k_1},\, g_{d,i+1,\ell_1} \bigr]\cdots
    \bigl[ g_{d,i+1,k_M},\, g_{d,i+1,\ell_M} \bigr]$.
  \end{description}
  If $d$ is clear from the context, then we write $g_{i,j}$ for $g_{d,i,j}$.
  
  $G$ is called \emph{\shortL-uniformly compressibly SENS} if,
  moreover,
  \begin{enumerate}[(a)]
    \setcounter{enumi}{3}
  \item\label{compSENSu1}  on input of the string
    $1^d$ and a binary encoded number $j$ one can compute in \L an SLP for $g_{d,d,j}$, and
  \item\label{compSENSu2} on input of the string
    $1^d$ and binary encoded numbers $i$ and $j$ one can compute in \L the binary
    representations of $k$ and $\ell$ such that
    $g_{d,i,j} = \bigl[ g_{d,i+1,k},\, g_{d,i+1,\ell} \bigr]$.
  \end{enumerate}
Analogously \emph{\shortL-uniformly compressibly ENS} is defined.
\end{definition}

\begin{remark}\label{rem:allpolySLP}
  Clearly, starting from the SLPs for $g_{d,d,j}$ and using the
  commutator identities (\ref{compSENSb}), we obtain SLPs of polynomial size for
  all $g_{d,i,j}$. Moreover, in the \shortL-uniform case, these SLPs  can be computed in \L from 
  $1^d$, $i$ and $j$ (the latter two given in binary representation).
  
\end{remark}
There is no evidence that a compressibly (S)ENS group is also (S)ENS. 
The point is that the length of the words $g_{d,d,j}$ can be only bounded by $2^{\Oh(p(d))}$ for the polynomial $p$
from \cref{def:compSENS}.

\begin{lemma}\label{lem:compSENS}
	The following properties of SENS also apply to (uniformly) compressibly (S)ENS:
	\begin{itemize}
		\item The property of being (uniformly) compressibly (S)ENS is independent of the
		choice of the standard generating set.
	\item If $Q$ is a finitely generated subquotient of a group $G$ and $Q$ is
	(uniformly) compressibly (S)ENS, then $G$ is also (uniformly) compressibly (S)ENS.
	\item If $G$ is a finite non-solvable group, then $G$ is uniformly compressibly SENS.
	\item If $F_n$ is a finitely generated free group of rank $n \geq 2$, then $F_n$ is uniformly compressibly SENS.
\end{itemize}
\end{lemma}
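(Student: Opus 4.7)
The plan is to mirror the four corresponding statements for SENS (\cref{lem:indepgens,lem:subquotient,lem:finiteSENS,cor:freeSENS}) and to verify at each step that the new SLP-based requirements in \cref{def:compSENS} can be preserved.

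For independence of the standard generating set, let $\Sigma,\Sigma'$ be two such sets. There is a constant $c$ so that every $a\in\Sigma$ can be written, using the padding letter $1$, as a word $u_a\in(\Sigma')^*$ of length exactly $2^c$. Given an SLP $\cG$ over $\Sigma$ for $g_{d,d,j}$, I would obtain an SLP $\cG'$ over $\Sigma'$ by introducing for each $a\in\Sigma$ a fresh variable $A_a$ with right-hand side $u_a$ and replacing every occurrence of the terminal $a$ in $\cG$ by $A_a$. This preserves the evaluation, enlarges the size by at most an additive constant, and the transformation is clearly \shortL-computable from $\cG$, so all conditions of \cref{def:compSENS} transfer from $\Sigma$ to $\Sigma'$.

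For the subquotient property, let $Q=H/K$ be a finitely generated subquotient of $G$ that is (uniformly) compressibly (S)ENS, witnessed by words $h_{d,i,j}$ over a standard generating set $\Gamma$ of $Q$, and fix a lift $\tilde a\in H$ for every $a\in\Gamma$ (with $\tilde 1=1$). By the previous item we may assume every $\tilde a$ is already a generator of $G$. I would then define an SLP for $g_{d,d,j}$ by applying the terminal-substitution $a\mapsto\tilde a$ to the SLP for $h_{d,d,j}$; this is \shortL-computable and of polynomial size. The commutator identities in Condition~(\ref{compSENSb}) of \cref{def:compSENS} (and their ENS variant) transport verbatim since generator-substitution is a monoid homomorphism, and the canonical projection $H\to Q$ sends $g_{d,0,1}$ to $h_{d,0,1}\neq 1$, giving $g_{d,0,1}\neq 1$ in $G$.

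For a non-abelian finite simple group $G$, Ore's theorem~\cite{LiebeckOST10} says every element is a commutator, so I would fix once and for all a function $\phi\colon G\to G\times G$ with $g=[\phi(g)_1,\phi(g)_2]$, an enumeration $h_1,\dots,h_{\abs G}$ of $G$ with $h_1\neq 1$, and for each $h_j$ a fixed representing word over $\Sigma$. Setting $p(d)=\abs G$ and $g_{d,i,j}=h_j$ for all $d,i$ then verifies all clauses of \cref{def:compSENS}: the SLP for $g_{d,d,j}$ is of constant size, the commutator recursion is given by $\phi$, and $g_{d,0,1}=h_1\neq 1$; the \shortL-uniformity conditions hold automatically since everything depends only on constants bounded by $\abs G$. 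For a general finite non-solvable group, a non-abelian simple composition factor is a subquotient, so the second item finishes that case. Finally, a free group $F_n$ with $n\ge 2$ surjects onto $A_5$, hence has $A_5$ as a subquotient, and the free-group case follows from the previous two items. The only non-routine ingredient is Ore's theorem, already invoked in \cref{lem:finiteSENS}; everything else amounts to checking that the substitutions preserve polynomial SLP size and \shortL-uniformity, which I expect to be the main bookkeeping step but not a genuine obstacle.
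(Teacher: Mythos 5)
Your proposal is correct and takes the same route the paper intends: the paper proves \cref{lem:compSENS} by simply pointing back to \cref{lem:indepgens}, \cref{lem:subquotient}, \cref{lem:finiteSENS}, and \cref{cor:freeSENS} and declaring that the arguments "repeat verbatim," and your proof is precisely a careful unpacking of that claim, checking at each step that the terminal substitutions preserve polynomial SLP size and \shortL-computability. The only minor bookkeeping point worth flagging is that in the finite simple case you should take $p(d)$ large enough (a constant suffices) to bound the size of the constant SLPs representing the $h_j$ over $\Sigma$, but this is immediate.
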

The proof of \cref{lem:compSENS} repeats verbatim the proofs of Lemmas~\ref{lem:indepgens}, \ref{lem:subquotient}, \ref{lem:finiteSENS}, and \cref{cor:freeSENS}.

Recall that \Ppoly (non-uniform polynomial time) is the class of languages that can be accepted by a family $(C_n)_{n \in \N}$ of boolean circuits
such that for some polynomial $s(n)$ the number of gates of $C_n$ is at most $s(n)$.

\begin{theorem}\label{thm:ppolyhard}
  Let $G$ be compressibly (S)ENS, then $\CompWP(G)$ is hard for \Ppoly under projection reductions.
\end{theorem}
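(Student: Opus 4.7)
The plan is to adapt the Barrington-style construction from the proof of Theorem~\ref{thm:NC1hard}, replacing $G$-programs by SLPs so as to handle polynomial-size (rather than merely logarithmic-depth) circuits; compressibility provides polynomial-size SLPs for the $g_{d,i,j}$, precisely matching polynomial depth. Fix $L\in\Ppoly$ accepted by a family $(C_n)$ of polynomial-size circuits and, by closure of $\Ppoly$ under complement, assume $x\in L\iff C_n(x)=0$; convert $C_n$ to an equivalent $\mathsf{nand}$-circuit of polynomial size $s$ and polynomial depth $d$, with the output gate at depth~$0$ and input gates at depth~$d$. For each pair $(v,j)$ where $v$ is a gate of $C_n$ at depth $i$ and $j\in\ccinterval{1}{p(d)}$, introduce new SLP variables $P_{v,j}$ and $P_{v,j}^{-1}$. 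By Remark~\ref{rem:allpolySLP}, prebuild polynomial-size SLPs with start symbols $S_{i,j}$ and $S_{i,j}^{-1}$ evaluating to $g_{d,i,j}^{\pm1}$.

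In the compressibly SENS case, for an interior $\mathsf{nand}$-gate $v$ at depth $i<d$ with children $v_0,v_1$ and $k,\ell$ determined by Definition~\ref{def:compSENS}(\ref{compSENSb}), set
\[
P_{v,j}\to S_{i,j}\,P_{v_1,\ell}^{-1}P_{v_0,k}^{-1}P_{v_1,\ell}P_{v_0,k},\qquad P_{v,j}^{-1}\to P_{v_0,k}^{-1}P_{v_1,\ell}^{-1}P_{v_0,k}P_{v_1,\ell}\,S_{i,j}^{-1}.
\]
For an input gate $v$ labeled by $x_m$ (positive polarity), the production of $P_{v,j}$ is a copy of the SLP for $g_{d,d,j}$ in which every terminal $a\in\Sigma$ is replaced by a terminal slot whose value is $a$ if $x_m=1$ and the padding letter $1$ if $x_m=0$; negative polarity and constant-labeled input gates are handled symmetrically. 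The compressibly ENS case is identical except that the single commutator in the interior production is replaced by the constant-length $M$-fold product of commutators dictated by Definition~\ref{def:compSENS}(b'); a calculation analogous to the SENS case verifies that when both children evaluate to~$1$ the product collapses to $g_{d,i,j}^{-1}$ and is otherwise trivial.

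A straightforward induction on $i$, identical in structure to the one in the proof of Theorem~\ref{thm:NC1hard}, shows that $P_{v,j}$ evaluates in $G$ to $g_{d,i,j}$ if gate $v$ evaluates to~$1$ and to the identity if it evaluates to~$0$; hence $\val(P_{v_{\text{out}},1})=_G 1$ iff $x\in L$. The total number of $P_{v,j}$ variables is at most $s\cdot p(d)$, each with constant-length production at interior gates and polynomial-length production at leaves, and together with the polynomially many pre-built SLPs for the $g_{d,i,j}^{\pm 1}$ the entire construction has polynomial size. Padding the binary encoding to the nearest power of two gives a genuine projection: only the terminal bits inside the leaf right-hand sides depend on the input, and each such bit depends on a single input bit $x_m$ via a fixed two-way choice between the binary encodings of two terminal symbols.

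The main obstacle is engineering the reduction so that all input dependence is confined to single-bit-dependent positions, yielding a true projection rather than a coarser $\Ac0$ or $\L$ reduction. This forces the $(v,j)$-indexing of variables (so that interior right-hand sides are fully input-independent, of fixed length, with fixed variable names) and forces the slot-by-slot conditional substitution inside the leaf SLPs (so that the choice between two SLP descriptions is never made at a higher, possibly input-dependent level). A secondary, routine subtlety is that the inductive index propagation $j\mapsto(k,\ell)$ prescribed by Definition~\ref{def:compSENS}(\ref{compSENSb}) means the same DAG gate $v$ may be paired with several different $j$'s, but since $p$ is polynomial the total count $s\cdot p(d)$ remains polynomial and the reduction size stays under control.
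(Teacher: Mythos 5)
Your construction is correct and matches the paper's proof essentially verbatim: the same gate-times-index variables with commutator right-hand sides drawn from Definition~\ref{def:compSENS}(\ref{compSENSb}), the same per-terminal conditional substitution at the input gates, and the same size and projection analysis. The only detail you leave implicit is that the \nand-circuit must first be made synchronous so that every gate has a well-defined depth and both children of a depth-$i$ gate sit at depth $i+1$ (and, in the ENS case, that the $M$-fold product of commutators must be reversed to collapse to $g_{d,i,j}^{-1}$); these are routine normalizations that the paper handles explicitly or omits as well.
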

    
\begin{proof} 
As before we only consider the case that $G$ is compressibly SENS.
Let $(C_n)_{n \in \N}$  be a family of polynomial size circuits. Fix an input length $n$ and consider the circuit $C = C_n$. 
 For simplicity, we assume that  all  non-input gates of $C$ are {\sf nand}-gates (this is by no means necessary for the proof, but that way we only need to deal with one type of gates). 
 Input gates are labelled with variables $x_1, \ldots, x_n$ or negated variables $\neg x_1, \ldots, \neg x_n$. This allows to assume that 
 $C$ is synchronous in the sense that
 for every gate $g$ all paths from an input gate to $g$ have the same length. 
 Let $d$ be the depth of $C$. Notice that we do not require $C$ to be a tree (indeed, this would lead to an exponential blow up since $d$ could be as large as the number of gates of $C$).
 
Let $g_{i,j} = g_{d,i,j} \in \Sigma^*$ for $0\leq i \leq d $ and $1 \leq j \leq p(d)$ be from \cref{def:compSENS}. 
We now construct an SLP $\mathcal{G}$ that contains for 
each gate $t$ of $C$ at distance $i$ from the output gate and each $1 \leq j \leq p(d)$ variables $A_{t,j}, A_{t,j}^{-1}$ 
over the terminal alphabet $\set{\langle k, a,b\rangle}{k \in \ccinterval{1}{n}, a,b \in \Sigma}$ of $G$-program instructions
such that for any input word $x \in \{0,1\}^n$ the following holds:
\begin{itemize}
\item  If gate $t$ evaluates to $0$ then
the $G$-programs $\val_{\mathcal{G}}(A_{t,j})$ and $\val_{\mathcal{G}}(A_{t,j}^{-1})$ evaluate to $1$ in $G$.
\item If gate $t$ evaluates to $1$ then
the $G$-programs $\val_{\mathcal{G}}(A_{t,j})$ and $\val_{\mathcal{G}}(A_{t,j}^{-1})$ evaluate to $g_{i,j}$ and $g^{-1}_{i,j}$, respectively, in $G$.
\end{itemize}
This is exactly as \cref{eq:pvdef} in the proof of \cref{thm:NC1hard}. 

For an input gate $t$ labelled with $x_i$ (respectively $\lnot x_i$) this is straightforward using the SLPs for $g_{d,j}$ for the different $j$ and replacing every terminal $a$ in the SLPs by the $G$-program instruction $\langle i, a,1\rangle$ (respectively $\langle i, 1 ,a\rangle$).
For an inner gate $t$ (which is a {\sf nand}-gate by assumption) at distance $i$ from the output gate with inputs from gates $r$ and $s$ (both having distance $i+1$ from the output gate), we set 
\begin{equation}
A_{t,j} \to g_{i,j}  A_{s,\ell}^{-1} A_{r,k}^{-1} A_{s,\ell} A_{r,k}    \label{eq:SLP}
\end{equation}
where $k$ and $\ell$ are as in \eqref{compSENSb} from \cref{def:compSENS} such that $g_{i,j} =	\bigl[ g_{i+1,k},\,   g_{i+1,\ell} \bigr]$. Here we write $g_{i,j}$ as shorthand for the SLP with constant $G$-program instructions evaluating to $g_{i,j}$ as in \cref{rem:allpolySLP}. The correctness follows as in the proof of \cref{thm:NC1hard}.

Thus, we have constructed an SLP of $G$-program instructions. The evaluation of the instructions is the desired projection reduction. 
\end{proof}

\begin{theorem}\label{thm:phard}
  Let $G$ be uniformly compressibly SENS. Then $\CompWP(G)$ is \P-hard under \L reductions.
\end{theorem}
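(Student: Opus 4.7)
The plan is to adapt the construction in the proof of \cref{thm:ppolyhard} into an \shortL-computable reduction. The source problem will be the Circuit Value Problem restricted to synchronous Boolean circuits over \nand-gates whose input gates are labelled with constants $0$ or $1$; an instance of general CVP can be made synchronous in \shortL{} by padding with identity sub-circuits (each realised as two chained \nand-gates with the padding constant $1$), so this restricted version is \P-complete under \shortL-reductions.

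Let $C$ be such a synchronous circuit of depth $d$, and let $p$ be the polynomial provided by \cref{def:compSENS}. The output SLP will contain three layers of variables. First, for every $1\le j\le p(d)$, include the SLPs for $g_{d,d,j}$ and for its formal inverse $g_{d,d,j}^{-1}$; the former is produced in \shortL{} via condition \eqref{compSENSu1} and the latter by a symbol-by-symbol reversal-and-inversion of that SLP. Let $B_{d,j}$ and $\bar B_{d,j}$ denote their start variables. Second, for $0\le i<d$ and $1\le j\le p(d)$, produce variables $B_{i,j}$ and $\bar B_{i,j}$ with rules
\[ B_{i,j}\to \bar B_{i+1,\ell}\,\bar B_{i+1,k}\,B_{i+1,\ell}\,B_{i+1,k},\qquad \bar B_{i,j}\to \bar B_{i+1,k}\,\bar B_{i+1,\ell}\,B_{i+1,k}\,B_{i+1,\ell}, \]
where $k,\ell$ are the indices satisfying $g_{d,i,j}=[g_{d,i+1,k},g_{d,i+1,\ell}]$, supplied by condition \eqref{compSENSu2}. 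Third, for every gate $t$ of $C$ at distance $i_t$ from the output gate and every $1\le j\le p(d)$, introduce $A_{t,j}$ and $\bar A_{t,j}$ with rules exactly as in \eqref{eq:SLP}: for a \nand-gate $t$ with inputs $r,s$ set $A_{t,j}\to B_{i_t,j}\,\bar A_{s,\ell}\,\bar A_{r,k}\,A_{s,\ell}\,A_{r,k}$; for an input gate of value $1$ set $A_{t,j}\to B_{d,j}$; for an input gate of value $0$ set $A_{t,j}\to\eps$; and analogously for $\bar A_{t,j}$. The start variable of the whole SLP is $A_{t^*,1}$, where $t^*$ is the output gate of $C$. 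Correctness is the same inductive argument as in the proof of \cref{thm:ppolyhard}: the SLP evaluates in $G$ to $g_{d,0,1}$ if $C$ outputs $1$, and to $1$ otherwise. Since $g_{d,0,1}\neq 1$, this reduces the chosen \P-complete problem to $\CompWP(G)$.

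The main obstacle is to verify that the whole output can be produced in deterministic logarithmic space. The \shortL-transducer will enumerate the variable names in a canonical order, each written as a structured tuple like $\langle A,t,j\rangle$ or $\langle B,i,j\rangle$, encoded in $\Oh(\log|C|+\log d+\log p(d))=\Oh(\log|C|)$ bits. Scanning $C$ for gates, their types, and their input pointers is \shortL-trivial, and the distances $i_t$ are available since the input already presents $C$ as a synchronous (layered) circuit. The inner SLPs for $g_{d,d,j}$ and $g_{d,d,j}^{-1}$ are produced by the \shortL-subroutine of \eqref{compSENSu1}; the indices $k,\ell$ in each commutator rule come from the \shortL-subroutine of \eqref{compSENSu2}; every other rule has constant length. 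Because \shortL{} is closed under composition, all these subroutines can be invoked while the main transducer emits rules sequentially. No substantive difficulty appears beyond this bookkeeping: the whole argument is a \shortL-uniform re-enactment of the proof of \cref{thm:ppolyhard}, made possible precisely by the uniformity conditions \eqref{compSENSu1} and \eqref{compSENSu2} of \cref{def:compSENS}.
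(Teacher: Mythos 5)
Your construction is essentially the paper's: the same three\-/layer SLP re\-/enacting \cref{thm:ppolyhard}, with conditions \eqref{compSENSu1} and \eqref{compSENSu2} supplying the leaf SLPs and the commutator indices, and closure of \shortL{} under composition ($\L^{\L}=\L$) absorbing the subroutine calls. (Minor slip: with the paper's convention $[h,g]=h^{-1}g^{-1}hg$, your rule for $B_{i,j}$ actually produces $[g_{d,i+1,\ell},g_{d,i+1,k}]=g_{d,i,j}^{-1}$, i.e.\ you have swapped the roles of $B_{i,j}$ and $\bar B_{i,j}$; this is cosmetic.)

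The one step that does not work as written is your opening claim that a general CVP instance ``can be made synchronous in \shortL{} by padding with identity sub-circuits.'' To know how many identity gadgets to insert on a wire you need the level of each endpoint, and computing gate levels (longest paths) in an arbitrary DAG is not a logspace computation. This is exactly the point the paper sidesteps by starting from the variant of CVP that is already known to be \P-complete for \emph{synchronous}, monotone, alternating circuits \cite[A.1.6]{GreenlawHR95}, and only then replacing {\sf and}/{\sf or} by \nand. Your argument is repairable without that citation --- e.g.\ declare the level of a gate to be (twice) its index in the topological order given with the input, place all input gates at level $0$, and pad each wire by the resulting even level gap using chained $\nand(\cdot,1)$ pairs --- but as stated the reduction to your source problem is not justified, whereas everything downstream of it is correct and matches the paper.
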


\begin{proof}
  In \cite[A.1.6]{GreenlawHR95} the following variant of the circuit
  value problem is shown to be \P-complete: the input circuit is
  synchronous, monotone (only {\sf and}- and {\sf or}-gates), and alternating~--
  meaning that within one level all gates are of the same type and
  adjacent levels consist of gates of different types.

  Moreover, we can assume that the first layer after the inputs
  consists of {\sf and}-gates and that the output gate is an {\sf or}-gate
  (in particular, there is an even number of non-input layers). By replacing
  each {\sf and}- and {\sf or}-gate by a {\sf nand}-gate, we obtain a synchronous circuit
  computing the same function using only {\sf nand}-gates.

  Hence, we can apply the construction from the proof of
  \cref{thm:ppolyhard} and then evaluate the $G$-program instructions
  in the resulting SLP. The latter can clearly be done in
  \L. Moreover, the construction of the SLP can also be done in \L:
  For input gates the corresponding SLP can be computed in \L by assumption
  \eqref{compSENSu1}. For an inner gate $t$, one needs to compute the rules from
  \cref{eq:SLP}.  The indices $k$ and $\ell$ can be computed in \L by
  assumption \eqref{compSENSu2}. Notice here that $k$ and $\ell$ only need a
  logarithmic number of bits, so we can think of this computation as
  an oracle call with a logspace oracle. Since $\L^\L = \L$, the whole
  computation is \L even though we compute a non-constant number of
  SLP rules.
\end{proof}
\cref{thm:phard} implies that the compressed word for  finite non-solvable groups and finitely generated free groups of rank at least $2$
is \P-complete. This has been also shown in \cite{BeMcPeTh97} (for finite non-solvable groups) and \cite{Loh06siam} (for finitely generated free groups of rank at least $2$).

\begin{remark}\label{rem:wreathcompSENS}
  Consider the situation of \cref{prop:wreathSENS}, but now assume
  that for each $h_d$ there is an SLP of size $p(d)$ for some fixed
  polynomial $p$. Moreover, assume that the SLP for $h_d$ is
  computable from the string $1^d$ in \L.  Then the arguments from the proof of \cref{prop:wreathSENS}
  show that $G$ is uniformly compressibly SENS.
\end{remark}

\begin{corollary}
  The compressed word problem for every weakly branched group is \P-hard. 
\end{corollary}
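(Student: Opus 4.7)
The plan is to promote Theorem~\ref{thm:weaklybranched} to its ``compressible'' analogue, showing that a finitely generated weakly branched self-similar group $G$ with finitely generated branching subgroup is \emph{uniformly compressibly SENS}, and then invoke Theorem~\ref{thm:phard} to conclude \P-hardness of $\CompWP(G)$.

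First I would reuse the setup from the proof of Theorem~\ref{thm:weaklybranched}: pick a non-trivial element $k$ of the branching subgroup $K$ and a vertex $v \in X$ (after replacing $X$ by $X^{|v|}$ if necessary) such that $v_{-1} := v^{k^{-1}}$, $v$, and $v_1 := v^k$ are pairwise distinct. Because $K$ is finitely generated, the endomorphism $\sigma: K \to K$ sending $g$ to the unique element of $K$ whose image under $\phi$ places $g$ in coordinate $v$ and the identity elsewhere is realised as a length-preserving substitution on a finite generating set $\Sigma_K$ of $K$, after padding with the letter $1$. Choose $k \in \Sigma_K$ and set $h_d = \sigma^d(k) \in \Sigma_K^* \subseteq \Sigma^*$.

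Next I would invoke Lemma~\ref{lemma-SLP-iterated-morph}: on input $1^d$, an SLP of size polynomial in $d$ for $h_d$ is computable in \L (it is exactly the iterated image of the starting symbol under $d$ copies of the morphism $\sigma$). These words $h_d$ satisfy all the hypotheses of Proposition~\ref{prop:wreathSENS}, so by Remark~\ref{rem:wreathcompSENS} the group $G$ is uniformly compressibly SENS. Concretely, the elements $g_{d,s,q}$ produced recursively in the proof of Proposition~\ref{prop:wreathSENS} are iterated commutators of the $h_r^{\pm 1}$ for $r \leq d$; re-indexing the pairs $(s,q)$ by integers $j \in \ccinterval{1}{p(d)}$ gives a family $g_{d,i,j}$ fitting Definition~\ref{def:compSENS}, where the leaf SLPs come from the \L-computable SLPs for $h_q$, and the recursion indices $(k,\ell)$ such that $g_{d,i,j} = [g_{d,i+1,k}, g_{d,i+1,\ell}]$ are read off from the binary encoding of $j$ by a trivial table lookup.

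Theorem~\ref{thm:phard} then yields \P-hardness of $\CompWP(G)$. The only real care point is pinning down the bookkeeping that translates the recursion $g_{d,s,q} = [g_{d-1,-1,0}^s, g_{d-1,1,q+1}]$ of Proposition~\ref{prop:wreathSENS} into the flat indexed form of Definition~\ref{def:compSENS} in a way that is computable in \L from binary-coded $(1^d,i,j)$; once a fixed encoding of the pair $(s,q)$ by the integer $j$ is chosen, both the commutator structure and the leaf SLPs for $g_{d,d,j}$ are immediate consequences of Lemma~\ref{lemma-SLP-iterated-morph} and the inductive definition in Proposition~\ref{prop:wreathSENS}.
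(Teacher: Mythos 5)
Your proposal matches the paper's proof essentially step for step: take the words $h_d=\sigma^d(k)$ from the proof of Theorem~\ref{thm:weaklybranched}, observe that Lemma~\ref{lemma-SLP-iterated-morph} yields \L-computable SLPs for them because they are iterated images under a fixed endomorphism, invoke Remark~\ref{rem:wreathcompSENS} to conclude $G$ is uniformly compressibly SENS, and then apply Theorem~\ref{thm:phard}. You spell out the bookkeeping for translating the recursion of Proposition~\ref{prop:wreathSENS} into the flat indexing of Definition~\ref{def:compSENS} more explicitly than the paper does, and you also make explicit the hypothesis that the branching subgroup is finitely generated (which the corollary's terse statement leaves implicit but which is needed for the argument), but the underlying route is the same.
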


\begin{proof}
By \cref{rem:wreathcompSENS}, we need to verify that we can compute SLPs for the $h_d$ as in \cref{prop:wreathSENS} in \L. However, this is straightforward because 
the $h_d$ in the proof of \cref{thm:weaklybranched} were defined by iterated application of some endomorphism. This yields the desired SLPs by \cref{lemma-SLP-iterated-morph}.
\end{proof}
In the same way it can be also shown that the compressed word problem for  Thompson's group $F$ is \P-hard.
But in the rest of the paper, we will show that the compressed word problem for  Thompson's group $F$ (as well as a large
class of weakly branched groups) is in fact \PSPACE-complete.

\section{Compressed word problems for wreath products} \label{sec-subsetsum-circuits}

In this section we consider regular wreath products of the form $G \wr \Z$.
The following result was shown in \cite{Lohrey14compressed} (for $G$ non-abelian) and \cite{KonigL18} (for $G$ abelian).

\begin{theorem}[c.f.~\cite{KonigL18,Lohrey14compressed}] \label{thm-brief}
If $G$ is a finitely generated group, then
\begin{itemize}
\item $\CompWP(G\wr\mathbb{Z})$ 
is $\coNP$-hard if $G$ is non-abelian and
\item $\CompWP(G\wr\mathbb{Z})$ belongs to 
$\mathsf{coRP}$ (complement of randomized polynomial time) if $G$ is abelian.
\end{itemize}
\end{theorem}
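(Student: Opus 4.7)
The plan is to treat the two cases by completely different methods. For the upper bound (abelian $G$), I would exploit the fact that $G \wr \mathbb{Z}$ embeds into a linear group over a commutative Laurent polynomial ring when $G$ is finitely generated abelian. An element of $G \wr \mathbb{Z}$ has the form $(f, n)$; identifying $f$ with the Laurent polynomial $\sum_{k \in \mathbb{Z}} f(k) t^k$ with coefficients in $G$, multiplication reads $(f_1, n_1)(f_2, n_2) = (f_1 + t^{n_1} f_2, n_1 + n_2)$. From an SLP $\mathcal{G}$ over generators of $G \wr \mathbb{Z}$ I would build, in polynomial time, an arithmetic circuit over $\mathbb{Z}[t, t^{-1}]$ representing the base-group polynomial together with the integer exponent sum. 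Triviality in $G \wr \mathbb{Z}$ then amounts to the exponent sum being $0$ and the circuit representing the zero polynomial. The latter is polynomial identity testing, which lies in $\mathsf{coRP}$ by the Schwartz--Zippel lemma, after a modular reduction to a random prime of polynomially many bits to handle torsion in $G$.

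For the lower bound (non-abelian $G$), I would reduce from a $\coNP$-complete problem, the natural candidate being the complement of a compressed variant of \textsc{SubsetSum}. Fix $g, h \in G$ with $[g, h] \neq 1$, and let $t$ denote the generator of $\mathbb{Z}$. The building blocks $t^a g t^{-a}$ and $t^a h t^{-a}$ place $g$, respectively $h$, at position $a$ of the base group $G^{(\mathbb{Z})}$, and elements at distinct positions commute. Given an instance $(a_1, \ldots, a_n, T)$ I would construct an SLP whose value in $G \wr \mathbb{Z}$ has a non-trivial base-group component at position $T$ precisely when some subset $S \subseteq \ccinterval{1}{n}$ satisfies $\sum_{i \in S} a_i = T$. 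The search over $2^n$ subsets is compressed into polynomial size by exploiting the distributivity of the shift action: a recursive halving of $\{a_1, \ldots, a_n\}$ lets the set size double at the cost of only constantly many additional SLP rules per step.

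The main obstacle is ensuring that contributions from distinct subsets summing to the same $T$ do not spuriously cancel in the base group; non-commutativity is the essential ingredient here. In the abelian case such contributions cancel additively, which is exactly what collapses the complexity to $\mathsf{coRP}$ via PIT. In the non-abelian case one can encode each subset by a \emph{tagged} commutator (\eg by conjugating $[g, h]$ with a subset-specific word that records membership information) so that distinct subsets produce distinct elements and a cancellation would require a precise combinatorial coincidence ruled out by the construction. Designing the tagging so that the SLP remains of polynomial size while preventing unwanted cancellation is the technical heart of the argument, and it is precisely the availability of this tagging trick that makes the non-abelian case strictly harder than the abelian one.
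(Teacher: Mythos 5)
The abelian upper bound is in the right spirit: $G\wr\Z$ for $G$ finitely generated abelian does reduce to polynomial identity testing over a Laurent polynomial ring (or, more precisely, a ring of the form $(\Z/m)[t,t^{-1}]^r$), and the circuits one obtains are exactly the ``skew circuits with big powers'' that \cite{KonigL18} handles; the crux, which you gloss over, is that the exponents of $t$ are themselves exponentially large integers produced by the circuit, so this is not literally a standard PIT instance, but that is precisely what the cited result addresses. Modulo that technical point, your sketch matches the literature.

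The non-abelian lower bound, however, has a genuine gap, and you have actually put your finger on it without resolving it. You correctly observe that if several subsets $S$ sum to the same target $T$, their contributions to the base-group coordinate at position $T$ are multiplied together and could cancel. Your proposed fix is to conjugate $[g,h]$ by a ``subset-specific'' word so that the contributions become distinct elements, and to hope that a product of distinct conjugates cannot vanish. This hope fails for general non-abelian $G$. Already for $G$ a non-abelian finite group (say $A_5$), products of distinct conjugates of a fixed non-trivial element $w$ can certainly be $1$: the conjugates of $w$ generate all of $G$, so nothing prevents a product of them from vanishing, and no ``combinatorial coincidence'' argument can rule this out across all input instances. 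The only setting where ``distinct conjugates don't cancel'' is reliably true is for bi-orderable groups such as free groups, which is far from the generality of the theorem (``$G$ non-abelian''). There is also a secondary issue you do not address: your SLP would leave non-trivial $G$-coordinates at every position reached by \emph{some} subset sum, not just at $T$, so detecting what happens at $T$ specifically requires an additional commutator-against-a-probe trick (compare Step~5 of the proof of \cref{thm-main-leaf2}).

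The approach that actually works --- both in \cite{Lohrey14compressed} and in the more general \cref{thm-main-leaf2} of this paper --- sidesteps cancellation entirely rather than trying to control it. One first reduces circuit evaluation to \emph{super-decreasing} subset sum (\cref{sec-circuits-subsetsum}); super-decreasing sequences have the property that every target is achievable by \emph{at most one} subset, so the $G$-coordinate at each position carries at most one group element and nothing can cancel. The SLP encoding of the characteristic string $S(\overline{s})$ of a super-decreasing sequence (\cref{sec-subsetsum-SLP}) then does the job your ``recursive halving'' was meant to do, but with uniqueness guaranteed structurally. If you want a direct $\coNP$-hardness proof rather than the leaf-language formulation, you should reduce from $\forall$-CircuitValue via super-decreasing subset sum, not from plain $\overline{\textsc{SubsetSum}}$ with a tagging heuristic.
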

In this section, we prove the following result, which improves upon the first statement of Theorem~\ref{thm-brief}.

\begin{theorem} \label{thm-main-leaf2}
Let $G$ be a finitely generated non-trivial group.
\begin{itemize}
\item $\CompWP(G\wr\mathbb{Z})$  belongs to $\forall \LEAF(\WP(G))$.
\item $\CompWP(G\wr\mathbb{Z})$  is hard for the class $\forall \LEAF(\WP(G/Z(G)))$.
\end{itemize}
In particular, if $Z(G) = 1$, then $\CompWP(G\wr\mathbb{Z})$ 
is complete for $\forall \LEAF(\WP(G))$.
\end{theorem}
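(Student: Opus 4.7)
The plan is to handle the two bounds separately, both relying on the pointwise structure of $G\wr\Z = G^{(\Z)}\rtimes\Z$.

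For the upper bound, I would express ``$\val(\mathcal{G})=1$ in $G\wr\Z$'' as the conjunction of two conditions: first, that the $\Z$-exponent sum $k$ of $\val(\mathcal{G})$ vanishes, which is computable in polynomial time by \cref{lemma-basic}; and second, writing $\val(\mathcal{G})=(f,k)$, that $f(p)=1$ in $G$ holds for every position $p\in\Z$. \cref{lemma-SLP-upper-bound} bounds the support of $f$ by $3^{|\mathcal{G}|/3}$, so each relevant $p$ is encoded by a binary string $v$ of polynomially many bits, which serves as the universal quantifier variable. For each such $v$ I would describe an adequate polynomial-time NTM that, on input $\mathcal{G}\#v$, precomputes the exponent sums $\sigma(A)$ of $t^{\pm 1}$ in $\val(A)$ for every variable $A$, and then traverses the SLP starting from $S$: at a binary rule $A\to BC$ it branches, the left child recursing into $B$ with the current $\Z$-counter and the right child recursing into $C$ with the counter incremented by $\sigma(B)$, ordered so that the lex order of leaves matches the order of positions of $\val(\mathcal{G})$; at a terminal rule $A\to a$ the machine emits $a$ whenever $a\in\Sigma$ and the counter matches $p_v$, and emits the padding letter $1$ otherwise. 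A preliminary check outputs a fixed non-trivial generator of $G$ (which exists because $G$ is non-trivial) when $k\ne 0$. The resulting leaf string is precisely the ordered product in $G$ of the $G$-generators of $\val(\mathcal{G})$ falling at position $p_v$, so it lies in $\WP(G)$ iff $f(p_v)=1$, giving $\CompWP(G\wr\Z)\in\forall\LEAF(\WP(G))$.

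For the lower bound, I start from $L\in\forall\LEAF(\WP(G/Z(G)))$ written as $L=\{u:\forall v\in\{0,1\}^{p(|u|)}\colon u\#v\in K\}$ with $K=\bLEAF(M,\WP(G/Z(G)))$ for a balanced polynomial-time NTM $M$ (using \cref{lemma-JMT} applied to the finitely generated group $G/Z(G)$). The goal is to produce in polynomial time from $u$ an SLP $\mathcal{H}$ over the standard generators of $G\wr\Z$ with $\val(\mathcal{H})=1$ iff $u\in L$. Writing $L_v$ for the element of $G$ represented by $\text{leaf}(M,u\#v)$, one has $u\#v\in K$ iff $L_v\in Z(G)$. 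The naive choice of placing $L_v$ at coordinate $v$ would only test $L_v=1$ in $G$, which is strictly stronger than the centrality condition recorded by $K$; this gap is precisely the $Z(G)$-factor appearing in the theorem statement. To close it I would fix the standard generators $g_1,\dots,g_m$ of $G$ and arrange $\mathcal{H}$ so that its value at coordinate $mv+i$ is the commutator $[L_v,g_i]$ and is trivial at every other coordinate. Because $h\in Z(G)$ iff $h$ commutes with every $g_i$, this yields $\val(\mathcal{H})=1$ iff $u\in L$.

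The main obstacle is realising $\mathcal{H}$ as a polynomial-size SLP. I would build it hierarchically from the balanced complete binary computation tree, of polynomial depth $D$, of the NTM $M'$ that first guesses $v\in\{0,1\}^{p(|u|)}$ and then simulates $M(u\#v)$. At each level $d\in\{0,\dots,D\}$ I would introduce a single SLP variable $\mathcal{L}_d$ whose evaluation in $G\wr\Z$ carries, at a stride-$2^{D-d}$ arithmetic progression of coordinates, the product of leaf labels of the corresponding depth-$d$ subtree of $M'$; the recursion rule takes the form $\mathcal{L}_d\to\mathcal{L}_{d+1}\cdot T_d\,\mathcal{L}_{d+1}\,T_d^{-1}$, with $T_d$ an SLP variable for $t^{2^{D-d-1}}$ obtainable by repeated doubling in $\Oh(D-d)$ rules, so that pointwise multiplication in $G^{(\Z)}$ combines the two sibling subtree-products at each on-grid coordinate of level $d$. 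A similar repeated-doubling construction gives an SLP of size $\Oh(\log N)$ for the ``generator carpet'' $\prod_{v,i}t^{-(mv+i)}g_i t^{mv+i}$, and taking commutators at the SLP level yields $\mathcal{H}$. The technical heart, which I expect to be the trickiest step, is controlling the off-grid ``junk'' produced by the recursive doubling: the product $\mathcal{L}_{d+1}\cdot T_d\mathcal{L}_{d+1}T_d^{-1}$ also deposits values at coordinates not lying on the stride-$2^{D-d}$ grid, and these must not spoil the final identity test. The conceptual reason the hardness is phrased modulo $Z(G)$ is that after taking commutators against each $g_i$ the off-grid residues can be forced to land in $Z(G)$ and hence be killed by $[\,\cdot\,,g_i]$, but making this precise requires careful bookkeeping of the shifts and of the total $\Z$-exponent of $\val(\mathcal{H})$, and constitutes the core work of the lower bound.
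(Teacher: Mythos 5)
Your upper bound argument is sound and follows essentially the paper's own proof: branch universally over candidate positions $x$ in the support (bounded by \cref{lemma-SLP-upper-bound}), and for each $x$ emit a leaf string over $\Sigma$ that evaluates to $f(x)$ by walking through $\val(\mathcal G)$ and printing either the $\Sigma$-letter or the padding letter $1$ depending on whether the running $\Z$-offset matches $x$; whether you compute the offset by the traversal counter or by a direct call to \cref{lemma-basic} is immaterial, and the preliminary $\eta=0$ test is handled identically. Your commutator device in the lower bound is also exactly right and is the paper's Step~5: the adjoint carpet $\kappa=\prod_{v,i}t^{-(mv+i)}g_it^{mv+i}$ has a small SLP by repeated doubling, the commutator $[\mathcal L,\kappa]$ tests centrality of the $L_v$ against each generator, and it is correct that this device, not the placement of $L_v$ itself, is what produces the $Z(G)$ quotient in the statement.

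The genuine gap is in building $\mathcal L$. Your recursion $\mathcal L_d\to\mathcal L_{d+1}\,T_d\,\mathcal L_{d+1}\,T_d^{-1}$ uses a \emph{single} SLP variable $\mathcal L_{d+1}$ twice, and a single SLP variable derives one fixed string; hence both occurrences produce the same element of $G^{(\Z)}$, only shifted. Iterating this from any base case yields a \emph{constant} carpet: every on-grid coordinate of $\val(\mathcal L_d)$ carries the same group element (a shifted copy of the base). It therefore cannot encode the exponentially many \emph{distinct} leaf labels $\lambda(\alpha)$ of the NTM, and the base case $\mathcal L_D$ (the carpet with $g_D(j)=\lambda(j)$ for $j\in\cointerval{0}{2^D}$) is precisely the object for which a polynomial-size SLP is needed and for which your construction offers nothing. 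The ``off-grid junk'' that you flag as the hard step is actually harmless (your carpet $\kappa$ is trivial off-grid, so $[L(z),\kappa(z)]=1$ there regardless of $L(z)$); the real obstruction is the on-grid encoding of the variable labels. The paper circumvents this with a quite different mechanism that your proposal does not reconstruct: a Cook--Levin circuit $C_z$ computing the leaf label from the path $\alpha$, a reduction of circuit evaluation to super-decreasing subsetsum (\cref{sec-circuits-subsetsum}), and the fact from \cref{sec-subsetsum-SLP} that the characteristic string $S(\overline s)$ of a super-decreasing sequence has a polynomial-size SLP. The nested SLP $\mathcal I$ built from $\mathcal G_1,\mathcal G_2,\mathcal H_0,\dots,\mathcal H_{n-1}$ then realises $f(p_\beta)=\lambda_\beta$ via the position arithmetic of the subsetsum encoding, which is the technical heart of the lower bound that is missing from your outline.
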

Be aware that in the case that $G$ is abelian, $\WP(G/Z(G))$ is the set of all words over the generators, and so $\forall \LEAF(\WP(G/Z(G)))$ consists of only the universal language. Therefore, for abelian $G$, the hardness statement in \cref{thm-main-leaf2} is trivial. 

The proof of the lower bound uses some of the techniques from the paper 
 \cite{Loh11IC}, where a connection between leaf strings and SLPs was established.
 In Sections~\ref{sec-subsetsum}--\ref{sec-subsetsum-SLP} we will introduce these techniques.
The proof of \cref{thm-main-leaf2} will be given in \cref{sec:leaf-wreath}.

\begin{remark}
	Let $G$ be a finite solvable group with composition series 
$1  = G_0 \leq G_1 \leq \cdots \leq G_r = G$
	meaning that $G_{i-1}$ is normal in $G_i$ and $G_i/G_{i-1}$ is cyclic of prime order $p_i$ for $i\in\oneset{1,\dots, r}$. 
	In this case, \cite[Satz 4.32]{Hertrampf94habil} implies that $\LEAF(\WP(G)) \sse \Mod{p_1} \cdots \Mod{p_r} \P$.
	Thus, using \cref{thm-main-leaf2} we obtain that $\CompWP(G\wr\mathbb{Z})$  belongs to $\forall \Mod{p_1} \cdots \Mod{p_r} \P$.	
	On the other hand, \cite[Theorem 2.2]{h00sigact} states that $\coMod{m} \P \sse \LEAF(\WP(G/Z(G))) $ for $m= \abs{G/Z(G)}$; thus, it follows that  $\CompWP(G\wr\mathbb{Z})$ is hard for $\forall \coMod{m} \P $. More\-over, by \cite[Theorem 2.6]{Hertrampf90tcs}, $\coMod{m} \P = \coMod{k} \P$ for $k= \prod_{p|m}p$ where the product runs over all prime divisors of $m$. As the next examples show there are the extreme cases that  $\CompWP(G\wr\mathbb{Z})$ actually belongs to $\forall \coMod{m} \P $ and also that it is hard for $\forall \Mod{p_1} \cdots \Mod{p_r} \P$ (at least, we give an example for $r=2$):
\begin{itemize}
	\item If $G$ is a finite, non-abelian $p$-group (\ie $p_i = p$ for all $i$), then  
	\[\LEAF(\WP(G)) \sse \Mod{p} \cdots \Mod{p} \P = \Mod{p} \P \sse \LEAF(\WP(G))\] by \cite[Theorem 6.7]{BeigelG92} and likewise 
	$\LEAF(\WP(G/Z(G)))  = \Mod{p} \P$.
	Hence, in this case $\CompWP(G\wr\mathbb{Z})$ is complete for  $\forall \Mod{p} \P$. More generally, for a finite non-abelian nilpotent group $G$ (\ie a direct product of $p$-groups) and  $m = \abs{G/Z(G)}$, it follows that $\CompWP(G\wr\mathbb{Z})$ is complete for $\forall \coMod{m} \P$.
	This is because by \cite[Lemma 2.4]{Hertrampf90tcs} a language $L$ is in $\coMod{m}\P$ if and only if it can be written as an intersection $\bigcap_{p|m}L_p$ for languages $L_p \in \Mod{p}\P$ for $p|m$.
	\item Finally, consider the symmetric group on three elements $S_3$. By \cite[Example 2.5]{h00sigact} we have $ \LEAF(\WP(S_3))= \Mod{3}\Mod{2} \P$ (also written as $\Mod{3}{\oplus} \P$). Since $S_3$ has trivial center, it follows that $\CompWP(S_3\wr\mathbb{Z})$ is complete for $\forall\Mod{3}{\oplus} \P$.
\end{itemize}	
\end{remark}

In the following, we will identify a bit string $\alpha = a_1 a_2 \cdots a_n$ ($a_1, \ldots, a_n \in \{0,1\}$)
with the vector $(a_1, a_2, \ldots, a_n)$. In particular, for another vector $\overline{s} = (s_1, s_2, \ldots, s_n) \in \mathbb{N}^n$
we will write $\alpha \cdot  \overline{s} = \sum_{i=1}^n a_i \cdot s_i$ for the scalar product. Moreover, we 
write $\sum \overline{s}$ for the sum $s_1 + s_2 + \cdots + s_n$.

\subsection{Subsetsum problems}  \label{sec-subsetsum}

A sequence $(s_1,\ldots,s_n)$ of natural numbers is
\emph{super-decreasing} if $s_i > s_{i+1}+ \cdots + s_n$ for all $i \in \ccinterval{1}{n}$. For example, $(s_1,\ldots,s_n)$
with $s_i=2^{n-i}$ is super-decreasing.
An instance of the \emph{subsetsum problem} is 
a tuple $(t,s_1,\ldots,s_k)$ of binary coded natural numbers.
It is a positive instance if there are $a_1, \ldots, a_k \in \{0,1\}$ such that
$t = a_1 s_1 + \cdots + a_k s_k$. 
Subsetsum is a classical $\NP$-complete problem, see e.g. \cite{GarJoh79}.
The \emph{super-decreasing subsetsum} problem is the restriction 
of subsetsum to instances $(t,s_1,\ldots,s_k)$, where $(s_1, \ldots, s_k)$
is super-decreasing. In \cite{KaRu89} it was shown that super-decreasing 
subsetsum is $\Ptime$-complete.\footnote{In fact, \cite{KaRu89} deals with
the {\em super-increasing} subsetsum problem. But this is only a nonessential
detail. For our purpose, super-decreasing sequences are more convenient.} 
We need a slightly generalized version of the construction showing $\Ptime$-hardness 
that we discuss in Section~\ref{sec-circuits-subsetsum}.

\subsection{From boolean circuits to super-decreasing subsetsum} \label{sec-circuits-subsetsum} 

For this section, we have to fix some more details on boolean circuits.
Let us consider a boolean circuit $C$ with input gates $x_1, \ldots, x_m$ 
and output gates $y_0, \ldots, y_{n-1}$.\footnote{It will be convenient for us to number the input gates from $1$ 
and the output gates from $0$.}
We view $C$ as a directed acyclic graph with multi-edges
(there can be two edges between two nodes); the nodes are the gates of the circuit.
The number of incoming edges of a gate is called its {\em fan-in} and the 
number of outgoing edges is the {\em fan-out}.
Every input gate $x_i$ has fan-in zero and every output gate $y_i$ has fan-out zero.
Besides the input gates there are two more gates $c_0$ and $c_1$ of fan-in zero, where
$c_i$ carries the constant truth value $i \in \{0,1\}$. Besides $x_1, \ldots, x_m, c_0, c_1$
every other gate has fan-in two and computes the \nand\ of its two input gates.
Moreover, we assume that every output gate $y_i$ is a {\sf nand}-gate.
For a bit string $\alpha = b_1 \cdots b_m$ ($b_1, \ldots, b_m \in \{0,1\}$) and $0 \leq i \leq n-1$
we denote with $C(\alpha)_i$ the value of the output gate $y_i$  when every input gate
$x_j$ ($1 \leq j \leq m$) is set to $b_j$. Thus, $C$ defines a map $\{0,1\}^m\to\{0,1\}^n$. 

We assume now that $C$ is a boolean circuit as above with the 
 following additional property that will be satisfied later: For all input bit strings $\alpha \in \{0,1\}^m$ there is exactly
one $i \in \cointerval{0}{n}$ such that $C(\alpha)_i=1$. Using a refinement of the construction from \cite{KaRu89}
we compute in \L $q_{0}, \ldots, q_{n-1} \in \mathbb{N}$ and two super-decreasing sequences 
$\overline{r} = (r_1, \ldots r_m)$ and $\overline{s} = (s_1, \ldots, s_k)$ for some $k$ (all numbers are represented in binary notation) with the following properties:
\begin{itemize}
\item The $r_1, \ldots, r_m$ are pairwise distinct powers of $4$. 
\item For all $0 \leq i \leq n-1$ and all $\alpha \in \{0,1\}^m$:
$C(\alpha)_i = 1$  if and only if there exists
$\delta \in \{0,1\}^k$ such that $\delta \cdot \overline{s} = q_i + \alpha \cdot \overline{r}$. 
\end{itemize}
Let us first add for every input gate $x_i$ two new {\sf nand}-gates
${\bar x}_i$ and ${\bar{\bar x}}_i$, where ${\bar{\bar x}}_i$ has the same outgoing edges as $x_i$. Moreover we remove the old outgoing edges of $x_i$
and replace them by the edges $(x_i, {\bar x}_i)$, $(c_1, {\bar x}_i)$ and two edges from ${\bar x}_i$ to ${\bar{\bar x}}_i$.
This has the effect that every input gate $x_i$ has a unique outgoing edge. Clearly, the new circuit computes the same
boolean function (basically, we introduce two negation gates for every input gate).
Let $g_1, \ldots, g_p$ be the {\sf nand}-gates of the circuit enumerated in reverse topological
order, i.e., if there is an edge from gate $g_i$ to gate $g_j$ then $i>j$. We denote the two edges entering gate
$g_i$ with $e_{2i+n-2}$ and $e_{2i+n-1}$. Moreover, we write $e_i$ ($0 \leq i \leq n-1$) for an imaginary edge
that leaves the output gate $y_{i}$ and whose target gate is unspecified. Thus, the edges of the circuit are 
$e_0,\ldots, e_{2p+n-1}$.
We now define the natural numbers $q_{0}, \ldots, q_{n-1}, r_1, \ldots r_m,s_1, \ldots, s_k$ with $k = 3p$:
\begin{itemize}
\item
Let $I = \{ j \mid e_j \text{ is an outgoing edge of the constant gate  $c_1$ or a {\sf nand}-gate} \}$.
For $0 \leq i \leq n-1$ we define the number $q_{i}$ as 
$$
q_{i} = \sum_{j \in I \setminus \{i\}} 4^j .
$$
Recall that $e_i$ is the unique outgoing edge of the output gate $y_i$.
\item If $e_j$ is the unique outgoing edge of the input gate $x_i$ then we set $r_i = 4^j$. We can choose the reverse
topological sorting of the {\sf nand}-gates in such a way that $r_1 > r_2 > \cdots > r_m$ (we only have to ensure that the target
gates $\overline{x}_1, \ldots, \overline{x}_m$ of the input gates appear in the order $\overline{x}_m, \ldots, \overline{x}_1$ in the 
reverse topological sorting of the {\sf nand}-gates).
\item To define the numbers $s_1, \ldots, s_k$ we first define for every {\sf nand}-gate $g_i$
three numbers $t_{3i}$, $t_{3i-1}$ and $t_{3i-2}$ as follows, where 
$I_i = \{ j \mid e_j \text{ is an outgoing edge of gate } g_i \}$:
\begin{eqnarray*}
t_{3i}       &=& 4^{2i+n-1} + 4^{2i+n-2} + \sum_{j \in I_i} 4^j \\
t_{3i-1} &=& 4^{2i+n-1} - 4^{2i+n-2} = 3 \cdot 4^{2i+n-2} \\
t_{3i-2}    &=& 4^{2i+n-2}   
\end{eqnarray*}
Then, the tuple $(s_1,\ldots, s_k)$ is $(t_{3p}, t_{3p-1}, t_{3p-2}, \ldots, t_3, t_2, t_1)$, which is indeed
super-decreasing (see also \cite{KaRu89}). In fact, we have $s_i - (s_{i+1} + \cdots + s_k) \geq 4^{n-1}$ for all $i \in \ccinterval{1}{k}$.
To see this, note that the sets $I_{i+1},\ldots, I_k$ are pairwise disjoint.  This implies that the $n-1$ low-order digits in the base-4 expansion of 
$s_{i+1} + \cdots + s_k$ are zero or one.

\end{itemize}
In order to understand this construction, one should think of the edges of the circuit carrying truth values.
Recall that there are $2p+n$ edges in the circuit (including the imaginary outgoing edges of the output gates
$y_0, \ldots, y_{n-1}$). 
A number in base-4 representation with $2p+n$ digits that are either $0$ or $1$ represents a truth assignment to the $2p+n$ edges,
where a $1$-digit represents the truth value $1$ and a $0$-digit represents the truth value $0$.
Consider  an input string $\alpha = b_1 \cdots b_m \in \{0,1\}^m$  and 
consider an output gate $y_i$, $i \in \cointerval{0}{n}$. Then the number
$N := 4^i + q_{i} + b_1 r_1 + \cdots + b_m r_m$ encodes the truth assignment for the circuit edges, where:
\begin{itemize}
\item all outgoing edges of the constant gate $c_1$ carry the truth value $1$, 
\item all outgoing edges of the constant gate $c_0$ carry the truth value $0$, 
\item the unique outgoing edge of an input gate $x_i$ carries the truth value $b_i$,
\item all outgoing edges of {\sf nand}-gates 
carry the truth value $1$.
\end{itemize}
We have to show that $C(\alpha)_i = 1$  if and only if there exists
$\delta \in \{0,1\}^k$ such that $\delta \cdot \overline{s} = N-4^i$.
For this we apply the canonical algorithm for super-decreasing subsetsum with input $(N,s_1,\ldots,s_k)$.
This algorithm initializes a counter $A$ to $N$ and then goes over the sequence $s_1,\ldots,s_k$ in that order.
In the $j$-th step ($1 \leq j \leq k$) we set $A$ to $A-s_j$ if $A \geq s_j$. If $A < s_j$ then we do not modify $A$.
After that we proceed with $s_{j+1}$. The point is that 
this process simulates the evaluation of the circuit on the input values $b_1, \ldots, b_m$. Thereby
the {\sf nand}-gates are evaluated in the topological order $g_p, g_{p-1}, \ldots, g_1$. Assume that $g_j$ is the gate that we 
want to evaluate next. In the above algorithm for super-decreasing subsetsum the evaluation of $g_j$ is simulated 
by the three numbers $t_{3j}$, $t_{3j-1}$, and $t_{3j-2}$. At the point where the algorithm checks whether $t_{3j}$ can 
be subtracted from $A$, the base-4 digits at positions $2j+n,\ldots, 2p+n-1$ in the counter value $A$ have been already set to zero. If the digits at the next two high-order 
positions $2j+n-1$ and $2j+n-2$ are still $1$ (i.e., the input edges $e_{2j+n-2}$ and $e_{2j+n-1}$ for gate
$g_j$ carry the truth value $1$), then we can subtract $t_{3j}$ from $A$. Thereby we subtract all powers
$4^{2j+n-1}$, $4^{2j+n-2}$ and
$4^h$, where $e_h$ is an outgoing edge for gate $g_j$. Since gate $g_j$ evaluates to zero (both input edges carry $1$), this subtraction
correctly simulates the evaluation of gate $g_j$: all outgoing edges $e_h$ of $g_j$ (that were initially set to the truth value $1$) are set to the 
truth value $0$. On the other hand, if one of the two digits at
positions $2j+n-1$ and $2j+n-2$ in $A$ is $0$ (which means that gate $g_j$ evaluates to $1$), then we 
cannot subtract $t_{3j}$ from $A$. If both digits at positions $2j+n-1$ and $2j+n-2$ in $A$ are $0$, then
also $t_{3j-1}$ and $t_{3j-2}$ cannot be subtracted. On the other hand, if exactly one of the two digits
at positions $2j+n-1$ and $2j+n-2$ is $1$, then with
$t_{3j-1}$ and $t_{3j-2}$ we can set  these two digits to $0$ (thereby digits at positions $< 2j+n-2$ are not modified).

Assume now that $y_j$ ($j \in \cointerval{0}{n}$) is the unique output gate that evaluates to $1$, i.e., all
output gates $y_{j'}$ with $j' \neq j$ evaluate to zero. Then after processing all weights
$s_1,\ldots,s_k$ we have $A = 4^j$ (we will never subtract $4^j$). We have shown that there exists
$\delta \in \{0,1\}^k$ such that $\delta \cdot \overline{s} + 4^j = N$. Hence, if $i=j$ (i.e., $C(\alpha)_i = 1$)
then  $\delta \cdot \overline{s} = N-4^i$. Now assume that $i \neq j$. In order to get a contradiction assume 
that there is $\delta' \in \{0,1\}^k$ such that $\delta' \cdot \overline{s} + 4^i = N$. We have $\delta \neq \delta'$ and 
$\delta \cdot \overline{s} + 4^j  = \delta' \cdot \overline{s} + 4^i$, i.e, $\delta \cdot \overline{s} - \delta' \cdot \overline{s} = 4^i - 4^j$.
Since $i,j \in \cointerval{0}{n}$ we get $|\delta \cdot \overline{s} - \delta' \cdot \overline{s}| < 4^{n-1}$. But 
$s_i - (s_{i+1} + \cdots s_k) \geq 4^{n-1}$ for all $i \in \ccinterval{1}{k}$ implies that 
$|\delta \cdot \overline{s} - \delta' \cdot \overline{s}| \ge 4^{n-1}$.

\subsection{From super-decreasing subsetsum to straight-line programs} \label{sec-subsetsum-SLP}

In \cite{LiLo06} a super-decreasing sequence $\overline{t} = (t_1,\ldots,t_k)$ of natural numbers is encoded by the string 
$S(\overline{t}) \in \{0,1\}^*$ of length $\sum \overline{t} +1$
such that for all $0 \leq p \leq \sum \overline{t}$:
\begin{equation}\label{main string}
S(\overline{t})[p] = \begin{cases} 1 & \text{if $p = \alpha \cdot \overline{t}$ for some $\alpha  \in \{0,1\}^k$}, \\
0 & \text{otherwise}.
\end{cases}
\end{equation}
Note that in the first case, $\alpha$ is unique.
Since $\overline{t}$ is a super-decreasing sequence, 
the number of $1$'s in the string $S(\overline{t})$ is $2^k$.
Also note that $S(\overline{t})$ starts and ends with $1$.
In \cite{LiLo06} it was shown that from a super-decreasing sequence $\overline{t}$
of binary encoded numbers one can construct in \L an SLP for 
the word $S(\overline{t})$.

\subsection{Proof of Theorem~\ref{thm-main-leaf2}} \label{sec:leaf-wreath}

Let us fix a regular wreath product of the form $G \wr \mathbb{Z}$ for a finitely generated
group $G$. Such groups are also known as generalized lamplighter groups (the lamplighter group arises
for $G = \mathbb{Z}_2$). Throughout this section, we fix a set of standard generators $\Sigma$ for $G$ and let $\tau = 1$ be the generator for $\mathbb{Z}$. 
Then $\Sigma \cup \{\tau,\tau^{-1}\}$ is a standard generating set for the wreath product $G \wr \mathbb{Z}$.
In $G \wr \mathbb{Z}$ the $G$-generator $a \in \Sigma$ represents the mapping $f_{a} \in G^{(\Z)}$ with $f_{a}(0) = a$ and 
$f_{a}(z) = 1$ for $z \neq 0$. 
For a word $w \in (\Sigma \cup \{\tau,\tau^{-1}\})^*$ we define $\eta(w) :=  |w|_\tau-|w|_{\tau^{-1}}$. Thus, the element of $G \wr \mathbb{Z}$ represented by
$w$ is of the form $f  \tau^{\eta(w)}$ for some $f \in G^{(\Z)}$.
Recall the definition of the left action of $\Z$ on $G^{(\Z)}$ from \cref{sec-wreath} (where we take $H = Y = \Z$).
For better readability, we write $c \circ f$ for ${}^c \! f$ ($c \in \Z$, $f \in G^{(\Z)}$).
Hence, we have $(c \circ f)(z) = f(z+c)$. If one thinks of $f$ as a bi-infinite word over the alphabet $G$,
then $c \circ f$ is the same word but shifted by $-c$.

The following intuition might be helpful: Consider a word $w \in ( \Sigma \cup \{\tau, \tau^{-1} \})^*$. In 
$G \wr \mathbb{Z}$ we can simplify $w$ to a word of the form
$\tau^{z_0} a_{1} \tau^{z_1} a_{2} \cdots \tau^{z_{k-1}} a_{k} \tau^{z_k}$ (with $z_j \in \Z$, $a_j \in \Sigma$),
which in $G \wr \mathbb{Z}$ can be rewritten
as
\[
\tau^{z_0} a_{1} \tau^{z_1} a_{2} \cdots \tau^{z_{k-1}} a_{k} \tau^{z_k} = \big(\prod_{j=1}^k \tau^{z_0 + \cdots+ z_{j-1}} a_{j} \tau^{-(z_0 + \cdots +z_{j-1})}\big) \; \tau^{z_0 + \cdots +z_k}.
\]  
Hence, the word $w$ represents the group element 
\[
\big(\prod_{j=1}^k (z_0 + \cdots+ z_{j-1}) \circ f_{a_j} \big)\, \tau^{z_0 + \cdots +z_k} .
\]
This gives the following intuition for evaluating $\tau^{z_0} a_{1} \tau^{z_1} a_{2} \cdots \tau^{z_{k-1}} a_{k} \tau^{z_k}$:
In the beginning, every $\Z$-position carries the $G$-value $1$. First, 
go to the $\Z$-position $-z_0$ and multiply the $G$-element at this position with $a_1$ (on the right), then 
go to the $\Z$-position $-z_0-z_1$ and multiply the $G$-element at this position with $a_2$, and so on.  

\begin{proof}[Proof of Theorem~\ref{thm-main-leaf2}]
The easy part is to show that the compressed word problem for $G \wr \mathbb{Z}$ belongs to
$\forall\LEAF(\WP(G))$. In the following, we make use
of the statements from Lemma~\ref{lemma-basic}.
Let $\mathcal{G}$ be an SLP over the alphabet $\Sigma \cup \{\tau,\tau^{-1}\}$ and let $f \tau^{\eta(\val(\mathcal{G}))} \in G \wr \mathbb{Z}$
be the group element represented by $\val(\mathcal{G})$. By Lemma~\ref{lemma-basic} we can compute $\eta(\val(\mathcal{G}))$ in polynomial time. 
If $\eta(\val(\mathcal{G})) \neq 0$ then the Turing-machine rejects by printing a non-trivial generator of $G$ (here we need the assumption that
$G$ is non-trivial).
So, let us assume that $\eta(\val(\mathcal{G}))=0$. 
We can also compute in polynomial time two integers $b,c \in \mathbb{Z}$ such that $\supp(f) \subseteq \ccinterval{b}{c}$. We can
take for instance $b = -|\val(\mathcal{G})|$ and $c = |\val(\mathcal{G})|$. It suffices to check whether for all
$ x \in \ccinterval{b}{c}$ we have $ f(x)=1$. For this, the Turing-machine branches universally to all binary coded integers $x \in \ccinterval{b}{c}$
(this yields the $\forall$-part in $\forall \LEAF(\WP(G))$). Consider a specific branch that leads to the integer 
$x \in \ccinterval{b}{c}$. From $x$ and the input SLP $\mathcal{G}$ the Turing-machine then produces a leaf string over the standard
generating set $\Sigma$ of $G$ such that this leaf string represents the group element $f(x) \in G$. For this, the machine
branches to all positions $p \in \cointerval{0}{|\val(\mathcal{G})|}$ (if $p<q<|\val(\mathcal{G})|$ then the branch for $p$ is to the left 
of the branch for $q$). For a specific position $p$, the machine computes in polynomial time the symbol
$a = \val(\mathcal{G})[p]$. If $a$ is $\tau$ or $\tau^{-1}$ then the machine prints $1 \in \Sigma$. On the other hand, if 
$a \in \Sigma$ then the machine computes in polynomial time $d = \eta(\val(\mathcal{G})[:p])$. This is possible by first
computing an SLP for the prefix $\val(\mathcal{G})[:p]$. If $d=-x$ then the machine prints the symbol $a$, otherwise
the machine prints the trivial generator $1$. It is easy to observe that the leaf string produced in this way represents the 
group element $f(x)$.

We now show the hardness statement from Theorem~\ref{thm-main-leaf2}.
By Lemma~\ref{lemma-JMT} it suffices to show that $\CompWP(G \wr \mathbb{Z})$ 
is hard for $\forall\bLEAF(\WP(G/Z(G)))$ with respect to \L-reductions. Let $a_0, \ldots, a_{n-1}$ be an arbitrary enumeration of the standard generators in $\Sigma$.
Fix a language $L \in \forall\bLEAF(\WP(G/Z(G)))$. From the definition of the class $\forall\bLEAF(\WP(G/Z(G)))$ it follows that
there exist two polynomials $p_1$ and $p_2$ and a balanced polynomial time NTM $M$ running in time $p_1+p_2$
that outputs a symbol from $\Sigma$ after termination and such that the following holds: Consider an input word
$z$ and let $T(z)$ be the corresponding computation tree of $M$. 
Let $m_1 = p_1(|z|)$, $m_2 = p_2(|z|)$, and $m = m_1+m_2$. Note that the nodes of $T(z)$ are the bit strings of length at most $m$.
For every leaf $\alpha \in \{0,1\}^m$ let us denote with $\lambda(\alpha)$ the symbol from $\Sigma$ that $M$ prints 
when reaching the leaf $\alpha$. Then $z \in L$ if and only if for all
$\beta \in \{0,1\}^{m_1}$ the string 
\begin{equation} \label{eq-alpha-u}
\lambda_\beta := \prod_{\gamma \in \{0,1\}^{m_2}}\lambda(\beta\gamma)
\end{equation}
represents a group element from the center $Z(G)$. Here (and in the following), the product in the right-hand side of  \eqref{eq-alpha-u} goes over all bit strings of length $m_2$
in lexicographic order.
Our construction consists of five steps:

\medskip
\noindent
{\em Step 1.}
Note that given a bit string $\alpha \in \{0,1\}^m$, we can 
compute in polynomial time the symbol $\lambda(\alpha) \in \Sigma$ by following the computation path specified by $\alpha$. 
Using the classical Cook-Levin construction 
(see e.g. \cite{AroBar09}), we can compute from the input $z$ and $a \in \Sigma$  in \L
a boolean circuit $C_{z,a}$ with $m$ input gates $x_1, \ldots, x_m$ and a single output gate $y_0$
such that for all $\alpha \in \{0,1\}^m$:
$C_{z,a}(\alpha)_0 = 1$ if and only if $\lambda(\alpha) = a$. 
By taking the disjoint union of these circuits and merging the input gates,
we can build a single circuit $C_z$ with $m$ input gates $x_1, \ldots, x_m$
and $n = |\Sigma|$ output gates $y_0, \ldots, y_{n-1}$. For every $\alpha \in \{0,1\}^m$ and every
$0 \leq i \leq n-1$ the following holds: $C_z(\alpha)_i = 1$ if and only if $\lambda(\alpha) = a_i$.

\medskip
\noindent
{\em Step 2.}
Using the construction from Section~\ref{sec-circuits-subsetsum} we can compute from the  
circuit $C_z$ in \L numbers $q_{0}, \ldots, q_{n-1} \in \mathbb{N}$ and two super-decreasing sequences 
$\overline{r} = (r_1, \ldots, r_m)$ and $\overline{s} =(s_1, \ldots, s_k)$ with the following properties:
\begin{itemize}
\item The $r_1, \ldots, r_m$ are pairwise distinct powers of $4$. 
\item For all $0 \leq i \leq n-1$ and all $\alpha \in \{0,1\}^m$ we have:
$\lambda(\alpha) = a_i$ if and only if
$C_z(\alpha)_i = 1$  if and only if there is $\delta \in \{0,1\}^k$ such that $\delta \cdot \overline{s} = q_{i} + \alpha \cdot \overline{r}$. 
\end{itemize}
Note that for all $\alpha \in \{0,1\}^m$ there is a unique $i$ such that $C_z(\alpha)_i = 1$.
Hence, for all  $\alpha \in \{0,1\}^m$ there is a unique $i$ such that $q_{i} + \alpha \cdot \overline{r}$ is of the form 
$\delta \cdot \overline{s}$ for some $\delta \in \{0,1\}^k$. For this unique $i$ we have $\lambda(\alpha) = a_i$.

We split the super-decreasing sequence $\overline{r} = (r_1, \ldots, r_m)$ into the two
sequences $\overline{r}_1 = (r_1, \ldots, r_{m_1})$ and $\overline{r}_2 = (r_{m_1+1}, \ldots, r_m)$.
For the following consideration, we need the following numbers:
\begin{eqnarray}
\ell & = & \textstyle \max\big\{ \sum \overline{r}_1 + \max\{q_0, \ldots, q_{n-1}\}+1, \; \sum\overline{s} - \sum\overline{r}_2 - \min \{q_0, \ldots, q_{n-1}\}+1\big\} \label{eq:l} \\
\pi &= & \textstyle \ell + \sum \overline{r}_2 
\end{eqnarray}
The binary codings of these numbers can be computed in \L (since iterated addition, $\max$, and $\min$ can be computed in \L).
The precise value of $\ell$ will be only relevant at the end of step 4.

\medskip
\noindent
{\em Step 3.} 
By the result from \cite{LiLo06} (see Section~\ref{sec-subsetsum-SLP})
we can construct in \L from the three super-decreasing sequences
$\overline{r}_1, \overline{r}_2$ and $\overline{s}$ three SLPs $\mathcal{G}_1$,   $\mathcal{G}_2$ and $\mathcal{H}$ over the alphabet
$\{0,1\}$ such that $\val(\mathcal{G}_1) = S(\overline{r}_1)$, $\val(\mathcal{G}_2) = S(\overline{r}_2)$
and $\val(\mathcal{H}) = S(\overline{s})$ (see (\ref{main string})).
For all positions $p \geq 0$ (in the suitable range) we have:
\begin{eqnarray*}
\val(\mathcal{G}_1)[p] = 1  & \Longleftrightarrow & 
\exists \beta \in  \{0,1\}^{m_1}: 
p = \beta \cdot \overline{r}_1 \\ 
\val(\mathcal{G}_2)[p] = 1  & \Longleftrightarrow & 
\exists  \gamma \in  \{0,1\}^{m_2}: 
p = \gamma \cdot \overline{r}_2 \\ 
\val(\mathcal{H})[p] = 1  & \Longleftrightarrow & 
\exists \delta \in  \{0,1\}^k: 
p = \delta \cdot \overline{s}
\end{eqnarray*}
Note that $|\val(\mathcal{G}_1)| = \sum\overline{r}_1+1$, $|\val(\mathcal{G}_2)| = \sum\overline{r}_2+1$, and
$|\val(\mathcal{H})| = \sum\overline{s}+1$.

\medskip
\noindent
{\em Step 4.}
We build in \L for every $i \in \ccinterval{0}{n-1}$ an
SLP $\mathcal{H}_i$ 
from the SLP $\mathcal{H}$ by replacing in every right-hand side of $\mathcal{H}$ 
every occurrence of $0$  by $\tau^{-1}$ and
every occurrence of $1$  by $a_i \tau^{-1}$.
Let $T_i$ be the start variable of $\mathcal{H}_i$, let $S_1$ be the start variable of $\mathcal{G}_1$, and let
$S_2$ be the start variable of $\mathcal{G}_2$.
We can assume that the variable sets of the SLPs $\mathcal{G}_1, \mathcal{G}_2, \mathcal{H}_0, \ldots, \mathcal{H}_{n-1}$
are pairwise disjoint.  We next combine these SLPs into a single SLP $\mathcal{I}$.
The variables of $\mathcal{I}$ are the variables of 
the SLPs $\mathcal{G}_1, \mathcal{G}_2,  \mathcal{H}_0, \ldots, \mathcal{H}_{n-1}$ plus a fresh variable $S$ which is the start
variable of $\mathcal{I}$. The right-hand sides for the variables are defined below. In the right-hand sides 
we write powers $\tau^p$ for integers $p$
whose binary codings can be computed in \L. Such powers can be produced by small subSLPs that can be constructed in \L too.
\begin{itemize}
\item In all right-hand sides of $\mathcal{G}_1$  and $\mathcal{G}_2$ 
we replace all occurrences of the terminal symbol $0$ by the $\mathbb{Z}$-generator $\tau$. 
\item We replace every
occurrence of the terminal symbol $1$ in a right-hand side of $\mathcal{G}_1$ by $S_2 \tau^\ell$,
where $\ell$ is from \eqref{eq:l}.
\item We  replace every
occurrence of the terminal symbol $1$ in a right-hand side of $\mathcal{G}_2$ by
$\sigma \tau$, where
\begin{equation} \label{eq-sigma}
\sigma = \tau^{q_{0}} T_0 \tau^{h-q_{0}} \tau^{q_{1}} T_1 \tau^{h-q_{1}} \cdots \tau^{q_{n-1}} T_{n-1} \tau^{h-q_{n-1}}
\end{equation}
and $h = \sum \overline{s}+1$ is the length of the word
$\val(\mathcal{H})$ (which is $-\eta(\val_{\mathcal{I}}(T_i))$ for every  $i \in\cointerval{0}{n}$).
Note that $\eta(\val_{\mathcal{I}}(\sigma)) = 0$.
\item Finally, 
the right-hand side of the start variable $S$ is  $S_1 \tau^{-d}$ where $d := \sum \overline{r}_1 + 1 + 2^{m_1} \cdot \pi$.
(note that $d = \eta(\val_{\mathcal{I}}(S_1))$).
\end{itemize}
Before we explain this construction, let us first introduce some notations.
\begin{itemize}
\item Let $u := \val_{\mathcal{I}}(S_2)$. We have $\eta(u) = |\val(\mathcal{G}_2)|$. Hence,
the group element represented by $u$ can be written as
$f_u \tau^{|\val(\mathcal{G}_2)|}$ for a mapping $f_u \in G^{(\Z)}$. 
\item Let $v := \val_{\mathcal{I}}(\sigma)$ where $\sigma$ is from \eqref{eq-sigma}. Note that $\eta(v)=0$.
Hence, the group element represented by $v$ is a mapping $f_v \in G^{(\Z)}$. Its support is a subset of 
the interval from position $-\max\{ q_0, \ldots, q_{n-1}\}$ 
to position $\sum \overline{s}-\min\{ q_0, \ldots, q_{n-1}\}$.
\item For $\beta \in \{0,1\}^{m_1}$ let
$\mathsf{bin}(\beta)$ be the number represented by $\beta$ in binary notation (thus, $\mathsf{bin}(0^{m_1}) = 0$, $\mathsf{bin}(0^{m_1-1}1) = 1$, \ldots, 
$\mathsf{bin}(1^{m_1}) = 2^{m_1}-1$). Moreover, let 
\[ p_\beta := -\mathsf{bin}(\beta) \cdot \pi.\]
\end{itemize}
First, note that $\eta(\val(\mathcal{I})) = 0$. This is due to the factor $\tau^{-d}$ in the right-hand side of the start
variable $S$ of $\mathcal{I}$. Hence, the group element represented by $\val(\mathcal{I})$ is a mapping $f \in G^{(\Z)}$.
The crucial claim is the following:

\medskip
\noindent
\begin{claim*}
	For every $\beta \in \{0,1\}^{m_1}$, $f(p_{\beta})$ is the group element represented by the leaf string
	$\lambda_{\beta}$ from \eqref{eq-alpha-u}.
\end{claim*}

\medskip
\noindent
{\em Proof of the claim.} In the following, we compute in the restricted direct product $G^{(\mathbb{Z})}$.
Recall that the multiplication in this group is defined by the pointwise multiplication of mappings.

Since we replaced in $\mathcal{G}_1$ every $1$ in a right-hand side by 
$S_2 \tau^\ell$, which produces $u \tau^\ell$ in $\mathcal{I}$ (which evaluates to $f_u \tau^{\pi+1}$)
the mapping $f$ is a product (in the restricted direct product $G^{(\mathbb{Z})}$) of shifted copies of $f_u$.
More precisely, for every $\beta' \in \{0,1\}^{m_1}$ we get the shifted copy 
\begin{equation} \label{eq-shift-dist}
 \big(\beta' \cdot \overline{r}_1 + \mathsf{bin}(\beta') \cdot \pi \big) \circ f_u 
\end{equation}
of $f_u$. The shift distance $\beta' \cdot \overline{r}_1 + \mathsf{bin}(\beta') \cdot \pi$ 
can be explained as follows: The $1$ in $\val(\mathcal{G}_1)$ that corresponds to $\beta' \in \{0,1\}^{m_1}$
occurs at position $\beta' \cdot \overline{r}_1$ (the first position is $0$) and to the left of this position we find $\mathsf{bin}(\beta')$ many
$1$'s and $\beta' \cdot \overline{r}_1 - \mathsf{bin}(\beta')$ many $0$'s in $\val(\mathcal{G}_1)$.
Moreover, every $0$ in $\val(\mathcal{G}_1)$ was replaced
by $\tau$ (shift by $1$) and every $1$ in $\val(\mathcal{G}_1)$ was replaced by $u \tau^\ell$ 
(shift by $\ell + |\val(\mathcal{G}_2)| = \pi + 1$). Hence, the total shift distance
is indeed  \eqref{eq-shift-dist}. Also note that
if $\beta' \in \{0,1\}^{m_1}$ is lexicographically smaller than $\beta'' \in \{0,1\}^{m_1}$ then 
$\beta' \cdot \overline{r}_1 < \beta'' \cdot \overline{r}_1$. 
This implies that 
\begin{eqnarray*}
f = \prod_{\beta' \in \{0,1\}^{m_1}}  \big(\beta' \cdot \overline{r}_1 + \mathsf{bin}(\beta') \cdot \pi \big) \circ f_u  = 
\prod_{\beta' \in \{0,1\}^{m_1}}  \big(\beta' \cdot \overline{r}_1 -p_{\beta'}  \big) \circ f_u .
\end{eqnarray*}
Let us now compute the mapping $f_u$. Recall that we replaced in $\mathcal{G}_2$ every occurrence of $1$ by  $\sigma \tau$, where $\sigma$ is from
\eqref{eq-sigma} and derives to $v$. The $1$'s in $\val(\mathcal{G}_2)$ occur at positions of the form $\gamma \cdot \overline{r}_2$
for $\gamma \in \{0,1\}^{m_2}$ and if $\gamma \in \{0,1\}^{m_2}$ is lexicographically smaller than $\gamma' \in \{0,1\}^{m_2}$ then 
$\gamma \cdot \overline{r}_2 < \gamma' \cdot \overline{r}_2$.
We therefore get
\[
f_u = \prod_{\gamma \in \{0,1\}^{m_2}}  (\gamma \cdot \overline{r}_2) \circ f_v . 
\]
We obtain 
\begin{eqnarray*}
f &=&  \prod_{\beta' \in \{0,1\}^{m_1}}  \big(\beta' \cdot \overline{r}_1 - p_{\beta'}\big) \circ f_u  \\
& = & \prod_{\beta' \in \{0,1\}^{m_1}}   \big(\beta' \cdot \overline{r}_1 - p_{\beta'} \big) \circ \prod_{\gamma \in \{0,1\}^{m_2}} ( \gamma \cdot \overline{r}_2 \circ f_v)   \\
& = & \prod_{\beta' \in \{0,1\}^{m_1}}  \prod_{\gamma \in \{0,1\}^{m_2}}   \big(\beta' \cdot \overline{r}_1 + \gamma \cdot \overline{r}_2 - p_{\beta'} \big) \circ f_v
\end{eqnarray*}
and hence
\begin{eqnarray*}
f(p_\beta) 
&=& \prod_{\beta' \in \{0,1\}^{m_1}}  \prod_{\gamma \in \{0,1\}^{m_2}} f_v(p_{\beta} - p_{\beta'} + \beta' \cdot \overline{r}_1 + \gamma \cdot \overline{r}_2) .
\end{eqnarray*}
We claim that for all $\beta \neq \beta'$ and all $\gamma\in \{0,1\}^{m_2}$ we have
\begin{equation} \label{eq-f_v}
f_v(p_{\beta} - p_{\beta'} + \beta' \cdot \overline{r}_1 + \gamma \cdot \overline{r}_2) = 1 .
\end{equation}
Let us postpone the proof of this for a moment. From \eqref{eq-f_v} we get
\[
f(p_\beta) = \prod_{\gamma \in \{0,1\}^{m_2}} f_v(\beta \cdot \overline{r}_1 + \gamma \cdot \overline{r}_2) .
\]
Consider a specific $\gamma \in \{0,1\}^{m_2}$ and let $\alpha = \beta \gamma$ and
$p = \beta \cdot \overline{r}_1 + \gamma \cdot \overline{r}_2 = \alpha \cdot \overline{r}$.
From the definition of $v = \val_{\mathcal{I}}(\sigma)$ it follows that for all $x \in \Z$, $f_v(x)$ is a product
of those group generators $a_i$ such that $x =  -q_i + \delta \cdot \overline{s}$ for some $\delta \in \{0,1\}^k$.
For the position $p$ this means that $q_i + \alpha \cdot \overline{r} = \delta \cdot \overline{s}$. By our previous remarks,
there is a unique such $i \in\cointerval{0}{n}$ and for this $i$ we have $\lambda(\alpha) = a_i$.
Hence, we obtain $f_v(p) = \lambda(\alpha)= \lambda(\beta\gamma)$ and thus
\[
f(p_\beta) = \prod_{\gamma \in \{0,1\}^{m_2}} \lambda(\beta\gamma) = \lambda_\beta .
\]
It remains to show \eqref{eq-f_v}. To get this identity, we need the precise value of $\ell$ from \eqref{eq:l}
(so far, the  value of $\ell$ was not relevant).
Assume now that $\beta \neq \beta'$, which implies
\[
|p_{\beta} - p_{\beta'}| \geq \pi =  \ell +  \sum \overline{r}_2 .
\]
Hence, we  either have 
\begin{eqnarray*}
p_{\beta} - p_{\beta'} +\beta' \cdot \overline{r}_1 + \gamma \cdot \overline{r}_2 & \geq & \ell +  \sum \overline{r}_2 + \beta' \cdot \overline{r}_1 + \gamma \cdot \overline{r}_2 \\
& \geq & \ell + \sum \overline{r}_2 \\
& > & \sum \overline{s} -  \min\{ q_0, \ldots, q_{n-1}\}
\end{eqnarray*}
or
\begin{eqnarray*}
p_{\beta} - p_{\beta'} +\beta' \cdot \overline{r}_1 + \gamma \cdot \overline{r}_2 & \leq & -\ell - \sum \overline{r}_2 + \beta' \cdot \overline{r}_1 + \gamma \cdot \overline{r}_2 \\
& \leq & -\ell + \sum \overline{r}_1  \\
& < &  - \max\{ q_0, \ldots, q_{n-1}\},
\end{eqnarray*}
where the strict inequalities follow from our choice of $\ell$.
Recall that the support of the mapping $f_v$ is contained  in $\ccinterval{-\max\{ q_0, \ldots, q_{n-1}\}}{\sum \overline{s}-\min\{ q_0, \ldots, q_{n-1}\}}$.
This shows \eqref{eq-f_v} and hence the claim.

\medskip
\noindent
{\em Step 5.} By the above claim, we have $f(p_{\beta}) \in Z(G)$ for all $\beta \in \{0,1\}^{m_1}$ if and only if 
$\lambda_{\beta} \in Z(G)$ for all $\beta \in \{0,1\}^{m_1}$, which is equivalent to $z \in L$. 
The only remaining problem is that the word $\val(\mathcal{I})$ produces some ``garbage'' group elements $f(x)$ on 
positions $x$ that are not of the form $p_\beta$. Note that for every $g \in G \setminus Z(G)$, there is a generator $a_i \in \Sigma$ such that the commutator 
$[g,a_i]$ is non-trivial. We now produce from $\mathcal{I}$ an SLP $\mathcal{I}^{-1}$ such that $\val(\mathcal{I}^{-1})$ represents
the inverse element of $f \in G^{(\Z)}$, which is the mapping $g$ with $g(x) = f(x)^{-1}$ for all $x \in \mathbb{Z}$. To construct $\mathcal{I}^{-1}$,
we have to reverse every right-hand side of $\mathcal{I}$ and replace every occurrence of a symbol $a_0, \ldots, a_{n-1}, \tau, \tau^{-1}$ by
its inverse. 

It is easy to compute in \L for every $i \in \cointerval{0}{n}$ an SLP for the word 
\[ 
w_i := \big(a_i \tau^\pi \big)^{2^{m_1}} \tau^{-2^{m_1} \cdot \pi} .
\]
Then the group element represented by $w_i$ is the mapping $f_i \in G^{(\Z)}$  whose  support 
is the set of positions $p_\beta$ for $\beta \in \{0,1\}^{m_1}$ and 
$f_i(p_\beta) = a_i$ for all $\beta \in \{0,1\}^{m_1}$.
We can also compute in \L an SLP for the word $w_i^{-1}$.
We then built in \L SLPs $\mathcal{J}_0, \ldots, \mathcal{J}_{n-1}$ such that $\val(\mathcal{J}_i) = \val(\mathcal{I}^{-1}) w_i^{-1} \val(\mathcal{I}) w_i$.
Hence, the word $\val(\mathcal{J}_i)$ represents the group element $g_i \in G^{(\Z)}$, where $g_i(x) = 1$ for all 
$x \in \mathbb{Z}\setminus \{ p_\beta \mid \beta \in \{0,1\}^{m_1} \}$ and 
$g_i(p_\beta) = f(p_\beta)^{-1} a_i^{-1} f(p_\beta) a_i  = [f(p_\beta),a_i]$. 

Finally, we construct in \L an SLP $\mathcal{J}$ such that 
\[ 
\val(\mathcal{J}) = \val(\mathcal{J}_0)\, \tau \,\val(\mathcal{J}_1) \,\tau\, \val(\mathcal{J}_2) \cdots \tau \, \val(\mathcal{J}_{n-1}) \,\tau^{-n+1} .
\]
We can assume that $n \leq \ell+\sum \overline{r}_2 = \pi$ ($n$ is a constant and we can always make $\ell$ bigger).
Then $\val(\mathcal{J})$ evaluates to the group element $g \in G^{(\Z)}$ with $g(x) = 1$ for 
$x \in \mathbb{Z} \setminus \{ p_\beta - i \mid \beta \in \{0,1\}^{m_1}, 0 \leq i \leq n-1 \}$ and
$g(p_\beta - i) = g_i(p_\beta) = [f(p_\beta),a_i]$ for $0 \leq i \leq n-1$.
Hence, if $f(p_{\beta}) \in Z(G)$ for all $\beta \in \{0,1\}^{m_1}$ then 
$\val(\mathcal{J})=1$ in $G \wr \Z$. On the other hand, if there is a $\beta \in \{0,1\}^{m_1}$ such that
$f(p_{\beta}) \in G \setminus Z(G)$ then there is an $a_i$ such that 
$[f(p_\beta),a_i] \neq 1$. Hence $g(p_\beta - i) \neq 1$ and 
$\val(\mathcal{J})\neq 1$ in $G \wr \Z$. 
This proves the theorem.
\end{proof}
The following remark will be needed in the next section.

\begin{remark} \label{rem:Z/t}
Consider the SLP $\val(\mathcal{J})$ computed in the previous proof from the machine input $z$.
We showed that $z \in L$ if and only if $\val(\mathcal{J}) = 1$ in $G \wr \Z$. 
Let $s = |\val(\mathcal{J})|$; it is 
a number that grows exponentially with $|z|$.
The binary expansion of $s$
can be computed from $z$ in \L using simple arithmetics.  
Let $t$ be any positive integer with $t \geq 2s+1$. Then 
$\val(\mathcal{J}) = 1$ in $G \wr \Z$ if and only if $\val(\mathcal{J}) = 1$ in $G \wr (\Z/t)$
where in the latter equality $\tau$ is taken for the generator of $\Z/t$. To see this, note that
during the evaluation of $\val(\mathcal{J})$ in $G \wr \Z$ only the $G$-elements at positions in the interval $\ccinterval{-s}{s}$
(whose size is at most $t$) can be multiplied with a generator of $G$.
Intuitively, $\val(\mathcal{J})$ evaluates in $G \wr \Z$ in the same way as in $G \wr (\Z/t)$.
\end{remark}

\section{PSPACE-complete compressed word problems} \label{sec:PSPACE-compl-CWP}

In this section, we will use Theorem~\ref{thm-main-leaf2} (and \cref{rem:Z/t}) to show $\PSPACE$-completeness
of the compressed word problem for several groups.
For upper upper bounds, we will make use of the following simple lemma:

\begin{lemma} \label{lemma-CWP-wreath}
If $\WP(G)$ belongs to $\polyL$, 
then $\CompWP(G\wr\mathbb{Z})$ belongs to $\PSPACE$.
\end{lemma}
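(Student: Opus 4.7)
The plan is to apply Lemma~\ref{lemmaPSPACE}, i.e., to build a $\PSPACE$-transducer that, given an SLP $\mathcal{G}$ over the standard generators $\Sigma\cup\{\tau,\tau^{-1}\}$ of $G\wr\Z$, outputs a word whose membership in a $\polyL$-language captures $\val(\mathcal{G})=_{G\wr\Z}1$. The element represented by $\val(\mathcal{G})$ has the form $f\,\tau^{\eta(\val(\mathcal{G}))}$ with $f\in G^{(\Z)}$ and $\supp(f)\subseteq\ccinterval{-N}{N}$ for $N:=|\val(\mathcal{G})|\leq 3^{|\mathcal{G}|/3}$ (Lemma~\ref{lemma-SLP-upper-bound}). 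Hence $\val(\mathcal{G})=_{G\wr\Z}1$ is equivalent to the conjunction of $\eta(\val(\mathcal{G}))=0$ and $f(x)=_G 1$ for every $x\in\ccinterval{-N}{N}$. The first condition is decidable in polynomial time by Lemma~\ref{lemma-basic}, so I would have the transducer reject on the spot (writing some fixed non-identity word over $\Sigma$) if it fails.

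The main task is to produce, for each $x\in\ccinterval{-N}{N}$ in order, an explicit word $w_x\in\Sigma^*$ representing $f(x)$, and concatenate them with a fresh delimiter $\#$. Since $N$ has bit length polynomial in $|\mathcal{G}|$, both $x$ and a scan position $p\in\ccinterval{0}{N-1}$ fit in polynomial space. For each pair $(x,p)$ the transducer calls Lemma~\ref{lemma-basic} to obtain the symbol $a=\val(\mathcal{G})[p]$ in polynomial time together with an SLP for the prefix $\val(\mathcal{G})[:p]$, from which the exponent sum $\eta(\val(\mathcal{G})[:p])=|\val(\mathcal{G})[:p]|_\tau-|\val(\mathcal{G})[:p]|_{\tau^{-1}}$ is read off in polynomial time. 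Following the intuition recalled just before Theorem~\ref{thm-main-leaf2}, it appends $a$ to the current block when $a\in\Sigma$ and $\eta(\val(\mathcal{G})[:p])=-x$, and otherwise appends the padding letter $1$; once $p$ reaches $N$ it emits $\#$ and increments $x$. By construction $w_x$ represents $f(x)$ in $G$, and the whole output $T(\mathcal{G}):=w_{-N}\#\cdots\#w_N$ has length at most $(2N+1)(N+1)=2^{\Oh(|\mathcal{G}|)}$, so the device is indeed a $\PSPACE$-transducer (it only stores $x$, $p$ and Lemma~\ref{lemma-basic}'s polynomial-size scratch space).

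To close the argument, let $L$ be the language of all strings $v_1\#v_2\#\cdots\#v_k$ over $\Sigma\cup\{\#\}$ such that each block $v_i\in\Sigma^*$ satisfies $v_i=_G 1$. Then $L\in\polyL$: a machine locates each block in turn and runs the hypothesized $\polyL$-algorithm for $\WP(G)$ on it, using space polylogarithmic in the total input length. Since $T(\mathcal{G})\in L$ iff $f(x)=_G 1$ for all $x\in\ccinterval{-N}{N}$ iff $\val(\mathcal{G})=_{G\wr\Z}1$, Lemma~\ref{lemmaPSPACE} delivers $\CompWP(G\wr\Z)\in\PSPACE$. The whole argument is essentially bookkeeping on top of the SLP primitives of Lemma~\ref{lemma-basic}, so no serious obstacle is anticipated; the only mildly subtle point is to make sure that the transducer truly runs in polynomial working space, which follows since every intermediate numerical quantity has polynomial bit length.
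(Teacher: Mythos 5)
Your proof is correct, but it takes a genuinely different route from the paper's. The paper keeps the argument modular: it first cites Waack's result that $\WP(G_1\wr G_2)$ is $\NC^1$-reducible to $\WP(G_1)$ and $\WP(G_2)$, concludes $\WP(G\wr\Z)\in\polyL$ (polyL being closed under \L-reductions), and then simply applies Lemma~\ref{lemma-cwp-PSPACE}, whose transducer just writes out $\val(\mathcal{G})$ verbatim. You instead bypass Waack entirely and build a single, more elaborate transducer that decompresses the SLP \emph{and} unravels the wreath-product structure at the same time, emitting the coordinate word $w_x$ for each position $x$ in the support and reducing to a block-wise triviality test over $\Sigma\cup\{\#\}$. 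In effect you inline the special case $H=\Z$ of Waack's reduction into the transducer; this is the same coordinate decomposition the paper itself uses in the upper-bound half of Theorem~\ref{thm-main-leaf2}. The paper's version is shorter and reuses standard machinery; yours is self-contained and makes the ``shape'' of a wreath-product element explicit, at the cost of re-deriving a known reduction.

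Two small points worth tightening. First, your $L\in\polyL$ claim is correct, but the justification is worth spelling out: since each block has length at most $n$, sequentially re-running a $\NSPACE(\log^c m)$-machine on each block, reclaiming its workspace between runs, gives a machine in $\NSPACE(\log^c n)$; acceptance of the combined run on all blocks is equivalent to the conjunction of acceptances because the nondeterministic choices for different blocks are independent. Second, your rejection gadget ``write a fixed non-identity word over $\Sigma$'' silently assumes $G$ is non-trivial; for trivial $G$ the whole statement is degenerate (then $G\wr\Z\cong\Z$ and the compressed word problem is already in $\P$), so this is harmless, but it deserves a parenthetical remark since, unlike Theorem~\ref{thm-main-leaf2}, the lemma has no non-triviality hypothesis and the paper's route handles trivial $G$ without a special case.
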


\begin{proof}
We use a result of Waack \cite{Waa90} according  to which  
the word problem for a wreath product $G_1 \wr G_2$ is $\NC^1$-reducible (and hence \L-reducible) to the word problems
for $G_1$ and $G_2$. Since  $\WP(G)$ belongs to $\polyL$ 
and $\WP(\mathbb{Z})$ belongs to \L, it follows that $\WP(G \wr \mathbb{Z})$ belongs to $\polyL$ ($\polyL$ is closed under \L-reductions).
Hence, by Lemma~\ref{lemma-cwp-PSPACE} the compressed word problem for $G \wr \mathbb{Z}$ belongs to 
$\PSPACE$.
\end{proof}
The following lemma generalizes the inclusion $\PSPACE \subseteq \LEAF(\WP(G))$ for $G$ finite non-solvable (where in fact equality holds)
from \cite{HLSVW93}. It can be proved directly using the same idea based on commutators as \cref{thm:uniNC1hard}. Here we follow a different approach and derive it by a padding argument from \cref{thm:uniNC1hard}.

\begin{lemma} \label{lemma-PSPACE-SENS}
If the finitely generated group $G$ is uniformly SENS, then $\PSPACE \subseteq \LEAF(\WP(G/Z(G)))$. 
\end{lemma}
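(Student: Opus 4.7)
The plan is to derive this inclusion from \cref{thm:uniNC1hard} via a standard padding argument that trades the exponential gap between polynomial time and logarithmic time for an exponential gap between input lengths. Start with $L \in \PSPACE = \APTIME$, and fix a polynomial $p$ such that $L$ is decided by an ATM in time $p(n)$. Define the padded language
\[ L' = \{ x \$ 0^{2^{p(|x|)} - |x| - 1} : x \in L \}, \]
where $\$$ is a fresh marker symbol. On input $y$ of length $N = 2^{p(|x|)}$, an ALOGTIME machine with random access can locate $\$$ (by binary search), recover $|x|$ and verify the padding, and then simulate the original ATM on $x$ in time $p(|x|) = \log_2 N$, so $L' \in \ALOGTIME$.

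Next, invoke \cref{lem:centerSENS} to conclude that $G/Z(G)$ is uniformly SENS together with a standard generating set $\Gamma$ (which can be taken as the image of $\Sigma$ under the canonical projection $\pi\colon G\to G/Z(G)$). \cref{thm:uniNC1hard} then produces a uniform family $(P_N)_{N}$ of $(G/Z(G),\Gamma)$-programs of polynomial length such that $P_N[y]=_{G/Z(G)}1$ iff $y\in L'$. Let $m=m(N)\le N^c$ be the length of $P_N$; after padding $P_N$ with trivial instructions $\langle 1,1,1\rangle$ we may assume $m=2^{d}$ with $d=O(p(n))$, polynomial in $n=|x|$.

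Now build an adequate balanced polynomial-time NTM $M$ for $L$. On input $x$ of length $n$, $M$ first (implicitly) fixes the padded word $y = x\$0^{N-|x|-1}$, whose bits are trivially computable from $x$ and a binary index of $O(p(n))$ bits. Then $M$ non-deterministically generates a complete binary tree of depth $d$, so that each leaf is indexed by a bit string $j\in\{0,1\}^d$ (viewed as a binary integer in $\{0,\dots,m-1\}$). At the leaf $j$, the machine uses the \DLOGTIME-uniformity of $(P_N)$ to compute the $j$-th instruction $\langle i_j,b_j,c_j\rangle$, queries bit $i_j$ of $y$, and prints the resulting letter $b_j$ or $c_j$ from $\Gamma$. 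Since every root-to-leaf path takes time $d+O(\log N)=\poly(n)$, the machine $M$ is polynomial time and balanced. The leaf string, read in lexicographic order of leaves, is precisely $P_N[y]\in\Gamma^*$, and $x\in L$ iff $P_N[y]=_{G/Z(G)}1$ iff $\leaf(M,x)\in\WP(G/Z(G),\Gamma)$. Hence $L\in\bLEAF(\WP(G/Z(G)))\subseteq\LEAF(\WP(G/Z(G)))$.

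The only real subtlety is keeping the NTM polynomial time: the input to the $G$-program has exponential length $N=2^{p(n)}$, but the program itself has length $m\le N^{O(1)}=2^{O(p(n))}$, so the balanced tree has polynomial depth $d=O(p(n))$ in the original input $n$, and each leaf computation (an instruction lookup plus a bit query in $y$) is \DLOGTIME in $N$, hence polynomial in $n$. Everything else is routine bookkeeping, using \cref{lem:centerSENS} to move from $G$ to $G/Z(G)$ and \cref{lemma-f.g.set-not-important} to avoid worrying about the precise generating set on the leaf-language side.
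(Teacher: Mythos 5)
Your proof is correct and follows essentially the same route as the paper's: pad $L$ to a language of exponential length so that it falls into \ALOGTIME, pass to $G/Z(G)$ via \cref{lem:centerSENS}, apply \cref{thm:uniNC1hard} to obtain uniform $(G/Z(G))$-programs, and package their evaluation as the leaf string of a balanced polynomial-time NTM. The only cosmetic omission is the intermediate step of fixing a binary block encoding of the padded alphabet before invoking \cref{thm:uniNC1hard} (the paper uses a map $\gamma$ for this), but this is routine and does not affect the argument.
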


\begin{proof}
    Let $L \subseteq \Gamma^*$ belong to $\PSPACE$.       
	Recall that $\PSPACE = \APTIME$. Hence, there is an ATM for $L$ with running time bounded by a polynomial $p(n)$. We can assume that $p(n) \ge n$ for all $n$.
	Now, consider the language 
	\[ \Pad_{2^{p(n)}}(L) = \set{v\$^{ 2^{p(\abs{w})} - \abs{v} }}{v \in L},\] where $\$$ is some fresh letter. Then $\Pad_{2^{p(n)}}(L)$ is in \ALOGTIME: 
	Let $w$ be the input word and let $n = |w|$ be the input length.
	First, we check whether $w \in \Gamma^* \$^*$ (the latter regular language even belongs to  uniform \Ac0). If not, we reject, otherwise we 
	can write $w =v\$^k$ for some $k\in \N$ and $v\in \Gamma^*$. Let $m = n-k = |v|$. We next have to verify that $n = 2^{p(m)}$.
	Using binary search, we compute in \DLOGTIME the binary representation of the input length $n$. If $n$ is not a power of two 
	(which is easy to check from the binary representation of $n$), then we reject. Otherwise, let $l = \log_2 n$. The unary representations of $l$
	can be obtained from the binary representation of $n$. It remains to check $l = p(m)$. Using $1^l$ we can check whether $|v| = m \leq l$. If not, we reject.
	Otherwise, we can produce $1^m$. Since polynomials are time constructible we can 
	simply run a clock for $p(m)$ steps, and stop if the number of steps exceeds $l$. Finally, we check whether $v \in L$ (by assumption this can be done in $\ATIME(p(|v|))$, which is contained in \ALOGTIME because of the increased input length). Thus, $\Pad_{2^{p(n)}}(L)$ is in \ALOGTIME.
			
	Since we aim for applying \cref{thm:uniNC1hard}, we have to encode every symbol $c \in \Gamma \cup \oneset{\$ }$ by a bit string $\gamma(c)$ of length $2^\mu$ for some fixed constant $\mu$.
       Hence, we consider the language $\gamma(\Pad_{2^{p(n)}}(L))$, which belongs to $\ALOGTIME$ as well.
	Observe that by \cref{lem:centerSENS}, also $G/Z(G)$ is uniformly SENS. Thus, we can apply \cref{thm:uniNC1hard}, which states that there is a uniform family
	$(P_n)_{n \in \N}$ of $(G/Z(G),\Sigma)$-programs of polynomial length recognizing $\gamma(\Pad_{2^{p(n)}}(L))$. Be aware, however, that ``polynomial'' here means 
	polynomial in the input length for $\gamma(\Pad_{2^{p(n)}}(L))$.
         Let $Q_n = P_{2^{p(n)+\mu}}$, which has length $2^{d(n)}$ for some function $d(n) \in \Oh(p(n))$.
	By the uniformity of $(P_n)_{n \in \N}$ we can compute $1^{d(n)}$ from $1^{2^{p(n)+\mu}}$ in $\DTIME(\Oh(\log(2^{p(n)+\mu}))) = \DTIME(\Oh(p(n)))$.
	Here we do not have to construct the unary representation of $2^{p(n)+\mu}$: recall that we have a random access Turing machine for the computation. One can easily
	check whether the content of the address tape (a binary coded number) is at most $2^{p(n)+\mu}$.
	
	Now, we construct an adequate NTM $M$ with $L =\LEAF(M,\WP(G/Z(G)))$: 
	on input $z \in \Gamma^*$ of length $n$ the machine $M$ produces a full binary tree of depth $d(n)$. In the  
         $i$-th leaf ($i \in \cointerval{0}{2^{d(n)}}$) it computes 
	the $i$-th instruction of $Q_n$. By the uniformity of 
	$(P_n)_{n \in \N}$  this can be done in  $\DTIME(\Oh(p(n)))$, so $M$ respects a polynomial time bound. Let $\langle j,a,b\rangle$ be the computed instruction.
	Here $j \in \ccinterval{1}{2^{p(n)+\mu}}$ is a position in $\gamma(z \$^{2^{p(n)} - n})$.
	Depending on the input bit at position $j$  in $\gamma(z \$^{2^{p(n)} - n})$ (which can be easily computed from $z$ and $j$ in polynomial time),
	the machine then outputs either $a$ or $b$. We then have $\leaf(M,z) = Q_n[\gamma(z \$^{2^{p(n)} - n})]$.
	  Thus, $z \in L$ iff $\gamma(z \$^{2^{p(n)} - n}) \in \gamma(\Pad_{2^{p(n)}}(L))$ iff $Q_n[\gamma(z \$^{2^{p(n)} - n})] \in \WP(G/Z(G))$ 
	  iff $\leaf(M,z) \in \WP(G/Z(G))$. 
\end{proof}
From \cref{thm-main-leaf2} and \cref{lemma-PSPACE-SENS} we get:

\begin{corollary} \label{thm-SENSE-PSPACE}
If $G$ is uniformly SENS, then $\CompWP(G\wr\mathbb{Z})$ is $\PSPACE$-hard. 
\end{corollary}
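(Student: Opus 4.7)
The plan is to simply chain together \cref{lemma-PSPACE-SENS} and \cref{thm-main-leaf2} via the closure property $\forall \PSPACE = \PSPACE$ that was noted when the $\forall \mathsf{C}$ operator was introduced. Concretely, I would first invoke \cref{lemma-PSPACE-SENS}, which gives the inclusion $\PSPACE \subseteq \LEAF(\WP(G/Z(G)))$ whenever $G$ is uniformly SENS.

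Next, I would apply the $\forall$-operator to both sides of this inclusion. Since $\forall$ is monotone with respect to inclusion of language classes (this is immediate from its definition), we obtain
\[
\forall \PSPACE \;\subseteq\; \forall\LEAF(\WP(G/Z(G))).
\]
Combined with the identity $\forall \PSPACE = \PSPACE$ (explicitly recorded in the paragraph defining $\forall \mathsf{C}$), this yields $\PSPACE \subseteq \forall \LEAF(\WP(G/Z(G)))$.

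Finally, \cref{thm-main-leaf2} states that $\CompWP(G\wr\mathbb{Z})$ is hard for $\forall \LEAF(\WP(G/Z(G)))$ under polynomial time (in fact log-space) reductions; composing with the previous inclusion gives that $\CompWP(G\wr\mathbb{Z})$ is $\PSPACE$-hard, which is the desired conclusion. There is no real obstacle: all of the substantive work (the leaf-language encoding of SLP evaluation in $G\wr\Z$, and the Barrington-style $G$-program construction for padded $\PSPACE$ languages) is already done in the two results being combined, so the proof is essentially a one-line chain of inclusions.
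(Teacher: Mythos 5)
Your proposal is correct and is exactly the argument the paper has in mind: the paper simply says "From \cref{thm-main-leaf2} and \cref{lemma-PSPACE-SENS} we get:" and your chain $\PSPACE=\forall\PSPACE\subseteq\forall\LEAF(\WP(G/Z(G)))$ followed by the hardness part of \cref{thm-main-leaf2} is the intended one-line combination. (One could equally well skip the $\forall$-monotonicity step and just use the trivial inclusion $\LEAF(\WP(G/Z(G)))\subseteq\forall\LEAF(\WP(G/Z(G)))$, but both routes are immediate.)
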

Since finite non-solvable groups and finitely generated free group of rank at least two are uniformly SENS 
and their word problems can be solved in \L (see \cite{LiZa77} for the free group case), we obtain the following
from \cref{lemma-CWP-wreath} and \cref{thm-SENSE-PSPACE}:

\begin{corollary} \label{coro-SENS-wreath-PSPACE}
If $G$ is a finite non-solvable group or a finitely generated free group of rank at least two,
then $\CompWP(G\wr\mathbb{Z})$  is $\PSPACE$-complete.
\end{corollary}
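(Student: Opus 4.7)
The plan is to assemble Corollary~\ref{coro-SENS-wreath-PSPACE} directly from the two cited building blocks, namely Lemma~\ref{lemma-CWP-wreath} (for the upper bound) and Corollary~\ref{thm-SENSE-PSPACE} (for the hardness), so that the proof reduces to verifying the two hypotheses on $G$ in each case.

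For the $\PSPACE$-hardness, the key is the uniform SENS property of $G$. In the finite non-solvable case this is exactly the content of Lemma~\ref{lem:finiteSENS}. In the case of the free group $F_n$ of rank $n\ge 2$, it is Corollary~\ref{cor:freeSENS}. With uniform SENS in hand, Corollary~\ref{thm-SENSE-PSPACE} immediately yields that $\CompWP(G\wr\mathbb{Z})$ is $\PSPACE$-hard.

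For the matching $\PSPACE$ upper bound, by Lemma~\ref{lemma-CWP-wreath} it suffices to show that $\WP(G)$ lies in $\polyL$. For $G$ finite the word problem is trivially decidable in constant (hence logarithmic) space by a table lookup. For $G = F_n$ a finitely generated free group, the Lipton--Zalcstein theorem~\cite{LiZa77} (already cited in the introduction as the linear-group result) shows that $\WP(F_n)$ belongs to $\L\subseteq\polyL$. Thus Lemma~\ref{lemma-CWP-wreath} gives $\CompWP(G\wr\mathbb{Z})\in\PSPACE$ in both cases.

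There is no real obstacle; the whole point of the preceding machinery was precisely to make this assembly painless. The only thing worth double-checking is that the hypotheses of Corollary~\ref{thm-SENSE-PSPACE} and of Lemma~\ref{lemma-CWP-wreath} are stated for the same notion of ``finitely generated standard generating set,'' which is indeed the convention fixed in Section~\ref{sec:prelims}, so nothing extra is needed. Combining the two bounds yields $\PSPACE$-completeness for both classes of groups, as claimed.
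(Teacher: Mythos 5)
Your proposal is correct and follows exactly the paper's argument: hardness from the uniform SENS property of finite non-solvable groups (Lemma~\ref{lem:finiteSENS}) and free groups (Corollary~\ref{cor:freeSENS}) via Corollary~\ref{thm-SENSE-PSPACE}, and the upper bound from $\WP(G)\in\L\subseteq\polyL$ (Lipton--Zalcstein for the free case) via Lemma~\ref{lemma-CWP-wreath}. Nothing is missing.
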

For Thompson's group $F$ we have $F \wr \mathbb{Z} \leq F$ (Lemma~\ref{lem:GS}). 
Moreover, $F$ is uniformly SENS (Corollary~\ref{cor:Thompson}).
Finally, Lehnert and Schweitzer have shown that $F$ is co-context-free, i.e., the complement 
of the word problem of $F$ (with respect to any finite generating set) 
is a context-free language \cite{LehSchw07}.
This implies that the word problem for $F$ belongs to the complexity class $\LogCFL$
(the closure of the context-free languages under \L-reductions). It is known that 
$\LogCFL \subseteq \DSPACE(\log^2 n)$ \cite{LewisSH65}.
If we put all this into Theorem~\ref{thm-SENSE-PSPACE}, we get:

\begin{corollary} \label{coro-Thompson-PSPACE}
The compressed word problem for Thompson's group $F$ is $\PSPACE$-complete.
\end{corollary}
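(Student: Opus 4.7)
The plan is to assemble the three ingredients highlighted in the paragraph preceding the corollary: (i) Lemma \ref{lem:GS} giving the self-embedding $F \wr \Z \leq F$, (ii) Corollary \ref{cor:Thompson} giving that $F$ is uniformly SENS, and (iii) the Lehnert--Schweitzer result that $F$ is co-context-free, which places $\WP(F)$ inside $\LogCFL$.

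First I will establish PSPACE-hardness. By Corollary \ref{cor:Thompson}, $F$ is uniformly SENS, so Corollary \ref{thm-SENSE-PSPACE} yields that $\CompWP(F \wr \Z)$ is PSPACE-hard. Lemma \ref{lem:GS} exhibits a finitely generated copy of $F \wr \Z$ inside $F$: the copy of $\Z$ is generated by $x_0$, and the two copies of $F$ generating the base group appear as explicit conjugates of $\langle x_1 x_2 x_1^{-2}, x_1^2 x_2 x_1^{-3}\rangle$ under powers of $x_0$. I will then invoke the standard fact that $\CompWP$ is inherited by finitely generated subgroups via a logspace substitution reduction (each generator of the subgroup is replaced in every right-hand side of the input SLP by a fixed word over the generators of the ambient group; cf.\ the generating-set independence argument of \cite[Lemma~4.2]{Lohrey14compressed}). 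This transfers PSPACE-hardness from $\CompWP(F \wr \Z)$ to $\CompWP(F)$.

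For the PSPACE upper bound, I will invoke the Lehnert--Schweitzer theorem that $F$ is co-context-free. Since the context-free languages lie in $\LogCFL$ and $\LogCFL$ is closed under complement, this gives $\WP(F) \in \LogCFL \subseteq \DSPACE(\log^2 n) \subseteq \polyL$ using the classical inclusion of \cite{LewisSH65}. Lemma \ref{lemma-cwp-PSPACE} then immediately yields $\CompWP(F) \in \PSPACE$.

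There is no substantial obstacle here: the statement is really an assembly of already-available tools. The only point worth double-checking is the logspace inheritance of $\CompWP$ along the finitely generated embedding $F \wr \Z \hookrightarrow F$, but this is the routine substitution reduction and raises no new difficulty. The same argument pattern (self-embedding together with uniform SENS plus a polyL word problem) will evidently yield PSPACE-completeness of $\CompWP$ for the other groups listed in Corollary~B.
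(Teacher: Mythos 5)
Your proof is correct and follows essentially the same route as the paper: \PSPACE-hardness via \cref{thm-SENSE-PSPACE} applied to the uniformly SENS group $F$ together with the embedding $F \wr \Z \leq F$ from \cref{lem:GS} (and the standard logspace substitution reduction for finitely generated subgroups), and the upper bound via co-context-freeness, $\LogCFL \subseteq \DSPACE(\log^2 n) \subseteq \polyL$, and \cref{lemma-cwp-PSPACE}. No gaps.
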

In rest of  the section we prove that the compressed word problem for some weakly branched groups
(including the Grigorchuk group and the Gupta-Sidki groups) is $\PSPACE$-complete as well.
We need the following lemma.

\begin{lemma} \label{lemma-embedding-b^n}
Let $G$ be a finitely generated group with the standard generating set $\Sigma$ 
such that $G \wr (\Z/p) \leq G$ for some
$p \geq 2$.
Let $\tau_n$ be a generator for the cyclic group $\Z/p^n$ for $n \geq 1$.
Then $G \wr (\Z/p^n) \leq G$ for every $n \geq 1$, and 
given $n$ in unary encoding  and $a \in \Sigma \cup \{ \tau_n, \tau_n^{-1} \}$ one can compute in \L an SLP $\mathcal{G}_{n,a}$ over the 
terminal alphabet $\Sigma$
such that the mapping $a \mapsto \val(\mathcal{G}_{n,a})$ $(a \in \Sigma \cup \{ \tau_n, \tau_n^{-1}  \})$ induces an embedding of 
$G \wr (\Z/p^n)$ into $G$. 
\end{lemma}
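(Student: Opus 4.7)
The plan is to prove the statement by induction on $n$, simultaneously constructing $\phi_n\colon G \wr \Z/p^n \hookrightarrow G$ and SLPs for the images of its generators. The base case $n=1$ is the hypothesis, and I fix two pieces of data from $\phi_1$ that will stay fixed throughout: the endomorphism $\sigma\colon G \to G$ defined by $\sigma(a) = \phi_1(\delta_0^a)$ (where $\delta_0^a \in G^{\Z/p}$ is the function supported at $0$ with value $a$), and the fixed word $\hat\tau := \phi_1(\tau) \in \Sigma^*$, where $\tau$ is the top generator of the inner $\Z/p$. After padding, $\sigma$ is a fixed substitution $\Sigma \to \Sigma^*$ and $\hat\tau$ is a fixed word, both independent of $n$.

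For the inductive step I use the Kaloujnine element $t_{n+1} = f \cdot t \in \Z/p^n \wr \Z/p$, where $t$ generates the top $\Z/p$ and $f \colon \Z/p \to \Z/p^n$ is supported only at $p-1$ with value the generator $1 \in \Z/p^n$. A direct computation gives $(f\cdot t)^p =$ diagonal function with constant value $1 \in \Z/p^n$, so $t_{n+1}$ has order $p^{n+1}$; moreover, under the bijection $k \leftrightarrow (\lfloor k/p \rfloor, k \bmod p)$ of $\Z/p^{n+1}$ with $\Z/p^n \times \Z/p$, the action of $\langle t_{n+1}\rangle$ inherited from $\Z/p^n \wr \Z/p$ coincides with the regular action of $\Z/p^{n+1}$ on itself. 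Combined with associativity of the permutational wreath product this gives the chain
\[ G \wr \Z/p^{n+1} \;\leq\; G \wr (\Z/p^n \wr \Z/p) \;=\; (G \wr \Z/p^n) \wr \Z/p \;\stackrel{\phi_n \wr \Z/p}{\hookrightarrow}\; G \wr \Z/p \;\stackrel{\phi_1}{\hookrightarrow}\; G, \]
which defines $\phi_{n+1}$.

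Chasing a generator through the three arrows yields explicit formulas. For $a \in \Sigma$ each application of $\phi_1$ replaces $a$ by $\sigma(a)$, so $\phi_n(a) = \sigma^n(a)$. For $\tau_n$, a short calculation using $\delta_{p-1}^g = \tau\,\delta_0^g\,\tau^{-1}$ in $G \wr \Z/p$ shows that the image of $\tau_{n+1}$ in $G \wr \Z/p$ equals $\tau \cdot \delta_0^{\phi_n(\tau_n)}$, so applying $\phi_1$ yields the recursion $\phi_{n+1}(\tau_{n+1}) = \hat\tau \cdot \sigma(\phi_n(\tau_n))$, with solution
\[ \phi_n(\tau_n) \;=\; \hat\tau \cdot \sigma(\hat\tau) \cdot \sigma^2(\hat\tau) \cdots \sigma^{n-1}(\hat\tau); \]
the image of $\tau_n^{-1}$ is obtained by reversing the word and inverting each letter.

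Finally, the SLPs are built in \L by invoking \cref{lemma-SLP-iterated-morph}. Since $\sigma$ is a fixed morphism on $\Sigma$ determined once and for all by $\phi_1$, on input $1^n$ and $a \in \Sigma$ that lemma produces an SLP for $\sigma^n(a)$, which is $\mathcal{G}_{n,a}$. For $\mathcal{G}_{n,\tau_n}$ I apply the same lemma to each letter of the fixed word $\hat\tau$ to obtain SLPs $A_k$ evaluating to $\sigma^k(\hat\tau)$ for $k = 0, \dots, n-1$, and concatenate them as $A_0 A_1 \cdots A_{n-1}$; the case $\tau_n^{-1}$ is symmetric. The main obstacle is Step~1: choosing the Kaloujnine element so that the subgroup embedding $\Z/p^{n+1} \hookrightarrow \Z/p^n \wr \Z/p$ intertwines the regular actions on $\Z/p^{n+1}$ and on $\Z/p^n \times \Z/p$, because this is what validates the wreath-product inclusion in the top line of the chain; once that is arranged, everything else is bookkeeping plus a direct invocation of \cref{lemma-SLP-iterated-morph}.
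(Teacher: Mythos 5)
Your proof is correct and takes essentially the same inductive approach as the paper: both compose a Kaloujnine-type embedding $\Z/p^{n+1}\hookrightarrow \Z/p^n\wr(\Z/p)$ with the associativity identity $G\wr(\Z/p^n\wr\Z/p)=(G\wr\Z/p^n)\wr\Z/p$ and the two inductively-given embeddings, derive the same telescoping formulas $\phi_n(a)=\sigma^n(a)$ and $\phi_n(\tau_n)=\prod_k\sigma^k(\hat\tau)$, and invoke \cref{lemma-SLP-iterated-morph}. The only cosmetic difference is that the paper puts the base component of the Kaloujnine element at position $0$ rather than $p-1$, which reverses the order of the factors in $\phi_n(\tau_n)$; your explicit check that the inherited action on $\Z/p^n\times\Z/p$ is the regular action is a useful precision the paper only states informally.
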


\begin{proof}
We fix an embedding $\phi_1 : G \wr (\Z/p) \to G$.
We prove the lemma by induction on $n$. The case $n=1$ is clear.
Consider $n \geq 2$ and assume that we have the embedding $\phi_{n-1} : G \wr (\Z/p^{n-1}) \to G$.
We show that 
\[ G \wr (\Z/p^n) = G \wr \langle \tau_n \rangle 
\leq (G \wr \langle \tau_{n-1} \rangle) \wr \langle \tau_1 \rangle = 
(G \wr (\Z/p^{n-1})) \wr (\Z/p)  
\] 
via an embedding $\psi_n$.
For this we define $\psi_n(g) = g \in G \leq G \wr (\Z/p^{n-1})$ for $g \in G$  and 
$\psi_n(\tau_n) = \tau_{n-1} \tau_1$. It is easy to see that this defines indeed an embedding.
The element $\tau_{n-1} \tau_1$ generates a copy of $\Z/p^{n}$ by cycling through 
$p$ copies of $\Z/p^{n-1}$ and incrementing mod $p^{n-1}$ the  current
$\Z/p^{n-1}$-value.

We extend the embedding $\phi_{n-1} : G \wr (\Z/p^{n-1}) \to G$ to an embedding 
\[ \phi_{n-1} : (G \wr (\Z/p^{n-1})) \wr (\Z/p) \to G \wr (\Z/p)\]
by letting  $\phi_{n-1}$ operate as the identity mapping on the right factor $\Z/p$.
Finally, we can define $\phi_n : G \wr (\mathbb{Z}/p^{n}) \to G$ by
$\phi_n = \psi_n \circ \phi_{n-1} \circ \phi_1$, where composition is executed from left to right.
We get 
\[ 
\phi_n(\tau_n) = \phi_1(\phi_{n-1}(\psi_n(\tau_n))) = \phi_1(\phi_{n-1}(\tau_{n-1} \tau_1)) = \phi_1(\phi_{n-1}(\tau_{n-1})) \phi_1(\tau_1).
\]
and $\phi_n(g) = \phi_1(\phi_{n-1}(\psi_n(g))) = \phi_1(\phi_{n-1}(g))$.
By induction on $n$ we get
\[ 
\phi_n(\tau_n) = \phi_1^{n}(\tau_1) \phi_1^{n-1}(\tau_1) \cdots \phi_1^{2}(\tau_1) \phi_1(\tau_1) .
\]
and $\phi_n(g) = \phi^n_1(g)$ for $g \in G$. 
Lemma~\ref{lemma-SLP-iterated-morph}  implies that given $n$ in unary encoding 
we can compute in \L SLPs for $\phi_n(\tau_n)$ and all $\phi_n(g)$ ($g \in G$).
\end{proof}
Using Lemma~\ref{lemma-embedding-b^n} we can show the following
variant of Theorem~\ref{thm-main-leaf2}.


\begin{theorem} \label{thm-PSPACE-Z_p}
	Let $G$ be a finitely generated group such that $G \wr (\Z/p) \leq G$ for some $p \geq 2$. Then $\CompWP(G)$ 
	is $\PSPACE$-hard. 
\end{theorem}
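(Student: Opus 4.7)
The argument proceeds by reducing the compressed word problem for the wreath product $G \wr \Z$ to that of $G$ itself, using a finite cyclic approximation of $\Z$. The hypothesis $G \wr (\Z/p) \leq G$ with $p \geq 2$ activates \cref{thm:wreathSENS} to yield that $G$ is uniformly SENS; combining this with \cref{lemma-PSPACE-SENS} gives $\PSPACE \subseteq \LEAF(\WP(G/Z(G)))$, and since $\forall\PSPACE = \PSPACE$ one obtains $\PSPACE \subseteq \forall\LEAF(\WP(G/Z(G)))$. Hence, for an arbitrary language $L \in \PSPACE$ and input word $z$, the construction from the proof of \cref{thm-main-leaf2} produces in \L an SLP $\mathcal{J}$ over the standard generating set $\Sigma \cup \{\tau,\tau^{-1}\}$ of $G \wr \Z$ such that $z \in L$ if and only if $\val(\mathcal{J}) =_{G\wr\Z} 1$.

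By \cref{rem:Z/t}, for any integer $t \geq 2|\val(\mathcal{J})|+1$ the same equivalence already holds in $G \wr (\Z/t)$, where $\tau$ is re-interpreted as a generator of $\Z/t$. The plan is to take $t = p^n$ with $n$ minimal such that $p^n \geq 2|\val(\mathcal{J})|+1$. Since $|\val(\mathcal{J})| \leq 3^{|\mathcal{J}|/3}$ by \cref{lemma-SLP-upper-bound} and $|\mathcal{J}|$ is polynomial in $|z|$, we have $n \in \Oh(|z|)$, so its unary encoding $1^n$ is \L-computable from $z$ by standard arithmetics on the binary expansions. With $1^n$ in hand, \cref{lemma-embedding-b^n} supplies in \L an SLP $\mathcal{G}_{n,a}$ over $\Sigma$ for each generator $a \in \Sigma \cup \{\tau_n,\tau_n^{-1}\}$ of $G \wr (\Z/p^n)$ such that $a \mapsto \val(\mathcal{G}_{n,a})$ extends to an embedding of $G \wr (\Z/p^n)$ into $G$. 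Viewing $\tau$ in $\mathcal{J}$ as $\tau_n$, I replace every terminal occurrence of $a$ in the right-hand sides of $\mathcal{J}$ by a fresh variable pointing to the start variable of $\mathcal{G}_{n,a}$ and take the disjoint union of the resulting rules with those of the $\mathcal{G}_{n,a}$. The outcome is an SLP $\mathcal{I}$ over $\Sigma$ with $\val(\mathcal{I}) =_G 1$ iff $\val(\mathcal{J}) =_{G\wr(\Z/p^n)} 1$ iff $z \in L$. Because \L is closed under composition, the whole reduction runs in logspace, yielding the claimed $\PSPACE$-hardness.

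The main obstacle is to justify that \cref{rem:Z/t} genuinely applies to the SLP $\mathcal{J}$ produced by the proof of \cref{thm-main-leaf2}, rather than being an ad hoc comment pertaining only to that proof. The key observation is that during evaluation of $\val(\mathcal{J})$ in $G \wr \Z$ only the $G$-coordinates at positions in $\ccinterval{-s}{s}$ (where $s = |\val(\mathcal{J})|$) can ever be multiplied by a non-trivial generator, so as soon as the cyclic group has order exceeding $2s$ the evaluations in $G \wr \Z$ and $G \wr (\Z/t)$ coincide~-- this is precisely what \cref{rem:Z/t} records. A secondary technical point is performing the substitution of one SLP into another within a logspace machine, but this is routine once one introduces, for each of the finitely many symbols being replaced, a single fresh variable whose right-hand side is the start variable of $\mathcal{G}_{n,a}$.
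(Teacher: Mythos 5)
Your proposal is correct and follows essentially the same route as the paper: invoke \cref{thm:wreathSENS} and \cref{lemma-PSPACE-SENS} to get $\PSPACE$ inside the relevant leaf class, reuse the SLP $\mathcal{J}$ from \cref{thm-main-leaf2} together with \cref{rem:Z/t} to drop to a finite cyclic quotient $\Z/p^n$, and then compose with the logspace-computable embedding from \cref{lemma-embedding-b^n}. Two cosmetic slips: the bound should read $n \in |z|^{\Oh(1)}$ rather than $n \in \Oh(|z|)$ (which is what the bound $|\val(\mathcal{J})|\le 3^{|\mathcal{J}|/3}$ with $|\mathcal{J}|$ polynomial actually gives, and is all that is needed), and one should pick $n$ from this a priori polynomial upper bound rather than ``minimal'' (which would require computing $|\val(\mathcal{J})|$); the paper does exactly this by fixing a polynomial $q$ with $p^{q(|z|)}\ge 2s+1$ in advance.
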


\begin{proof}
	By \cref{thm:wreathSENS}, every group with $G \wr (\Z/p) \leq G$ is SENS. Hence, by \cref{lemma-PSPACE-SENS}, $\PSPACE \subseteq \LEAF(\WP(G/Z(G)))$ and it suffices to show that $\CompWP(G)$ is hard for the complexity class $\forall \LEAF(\WP(G/Z(G)))$.
	
Consider a language $L \in \forall \LEAF(\WP(G/Z(G)))$ and an input word $z$ of length $n$.
Let  $\mathcal{J}$ be the SLP that we computed in the proof of Theorem~\ref{thm-main-leaf2} in \L from $z$.
We showed that $z \in L$ if and only if $\val(\mathcal{J}) = 1$ in $G \wr \mathbb{Z}$. Let $s = |\val(\mathcal{J})|$; it is 
a number in $2^{n^{\Oh(1)}}$. Hence, we can choose a fixed polynomial $q$ such that $p^{q(n)} \geq 2s+1$ for all input lengths $n$.
Let $m = q(n)$. By Remark~\ref{rem:Z/t} we have $z \in L$ if and only if $\val(\mathcal{J}) = 1$ in $G \wr (\mathbb{Z}/p^m)$.

From $1^m = 1^{q(n)}$ (which can be constructed in \L) we can compute by Lemma~\ref{lemma-embedding-b^n} 
for every $a \in \Sigma \cup \{ \tau_m, \tau_m^{-1}  \}$ an SLP $\mathcal{G}_{m,a}$ 
over the terminal alphabet $\Sigma$ such that the mapping $a \mapsto \val(\mathcal{G}_{m,a})$ 
$(a \in \Sigma \cup \{ \tau_m, \tau_m^{-1}  \})$ induces an embedding of the wreath product 
$G \wr (\Z/p^m)$ into $G$. Note that $\log m \in \mathcal{O}(\log n)$. Hence, the space needed for the construction
of the $\mathcal{G}_{m,a}$ is also logarithmic in the input length $n$.
We can assume that the variable sets of the SLPs $\mathcal{G}_{m,a}$ ($a \in \Sigma \cup \{ \tau_m, \tau_m^{-1}  \}$) and $\mathcal{J}$ are pairwise disjoint. Let
$S_{m,a}$ be the start variable of $\mathcal{G}_{m,a}$. We construct an SLP $\mathcal{G}$ by taking the union of the SLPs 
$\mathcal{G}_{m,a}$ ($a \in \Sigma \cup \{ \tau_m, \tau_m^{-1} \}$) and $\mathcal{J}$
and replacing in every right-hand side of $\mathcal{J}$ every occurrence of a terminal symbol $a$ by $S_{m,a}$. 
We have $\val(\mathcal{G})=1$ in $G$ if and only if $\val(\mathcal{J}) = 1$ in $G \wr (\Z/p^{m})$ if and only if 
$z \in L$.
\end{proof}


Let us now come to weakly branched groups. We restrict ourselves to weakly branched groups $G$ whose branching subgroup $K$ is not torsion-free.
\begin{lemma}\label{lem:wbtorsion=>wreath}
  Let $G$ be a weakly branched group whose branching subgroup $K$ contains elements of finite order. Then $K$ contains $K\wr (\Z/p)$ for some $p\ge2$.
\end{lemma}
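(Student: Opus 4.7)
The plan is to extract from $K$ an element $\sigma$ of order $p$ that at the first level of the tree $X^*$ cyclically permutes $p$ vertices $y_0,\dots,y_{p-1}\in X$ while having trivial state $\sigma@y_i=1$ at each of them. Together with the subgroups $y_i*K\le K$ supported on the subtrees beneath these vertices, this $\sigma$ will generate a copy of $K\wr(\Z/p)$ inside $K$. The cleanness condition $\sigma@y_i=1$ is precisely what makes the conjugation action of $\sigma$ cycle the factors $y_i*K$ without also twisting the coordinates inside.

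To construct $\sigma$, I would first replace a given element of $K$ of finite order $n\ge 2$ by a suitable power to obtain an element of prime order $p$; so we may assume $K$ contains an element of prime order $p$. The result of \cite[Theorem~6.9]{BartholdiGrigorchukSunic03}, recalled in \cref{sec:weaklybranched}, then implies that $K$ contains a copy of $\Pi_\infty=(\Z/p)_\infty$, where $\Pi=\Z/p\le \Sym{X}$. Inside $\Pi_\infty$ lies the first layer $\Pi_1=\phi^{-1}(\Pi)$, consisting of automorphisms of $X^*$ that act by a permutation from $\Pi$ at the root and trivially on every subtree below. Any generator of $\Pi_1$ is then an element $\sigma\in K$ which acts on $X$ by the $p$-cycle $y_0\mapsto y_1\mapsto\cdots\mapsto y_{p-1}\mapsto y_0$ and satisfies $\sigma@y_i=1$ by construction.

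With $\sigma$ fixed, set $B=\prod_{i=0}^{p-1}(y_i*K)$. Because the subtrees $y_iX^*$ are pairwise disjoint, the factors $y_i*K$ pairwise commute and $B\cong K^p$; moreover $B\le K$ since $K$ is a branching subgroup. Applying the general conjugation formula $(u*k)^g=u^g*k^{g@u}$ with $g=\sigma$ and $u=y_i$, and using $\sigma@y_i=1$ together with $y_i^\sigma=y_{i+1}$, yields $(y_i*k)^\sigma=y_{i+1}*k$. Thus conjugation by $\sigma$ shifts the factors $y_i*K$ cyclically, reproducing precisely the regular wreath product action of $\Z/p$ on $K^p$. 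Since $\sigma$ acts non-trivially on the first level while every element of $B$ fixes $X$ pointwise, $\langle\sigma\rangle\cap B=1$, and therefore $H \coloneqq \langle\sigma\rangle\cdot B = \langle\sigma\rangle\ltimes B\cong(\Z/p)\ltimes K^p\cong K\wr(\Z/p)$, so $K\wr(\Z/p)\le H\le K$ as required.

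The only real obstacle is producing $\sigma$ with trivial states at the permuted vertices; this is why the argument is routed through $(\Z/p)_\infty$ rather than trying to use the original element $t\in K$ directly. Indeed, the first-level permutation induced by an arbitrary $t$ of order $p$ comes with states $t@y_i\in G$ at the vertices of its $p$-orbit, and a non-trivial such state would introduce a twist $k\mapsto k^{t@y_i}$ into the conjugation formula; this twist need not preserve $K$ and would obstruct the wreath product identification. Passing to $(\Z/p)_\infty$ reduces exactly to the case of a pure top-level $p$-cycle, after which the rest of the argument is a short verification.
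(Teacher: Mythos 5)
Your overall architecture (a cyclic element permuting $p$ disjoint subtrees, the base group $B=\prod_i (y_i*K)\le K$, and the semidirect-product verification at the end) is sound, but the construction of the ``clean'' element $\sigma$ has a genuine gap. The result of \cite[Theorem~6.9]{BartholdiGrigorchukSunic03} provides an \emph{abstract} isomorphic copy of $(\Z/p)_\infty$ inside $K$; it does not place the concrete subgroup $\Pi_\infty\le\Aut(X^*)$ inside $K$ with its standard action on $X^*$. Consequently the image of the first layer $\Pi_1$ under that embedding is merely some subgroup of $K$ isomorphic to $\Z/p$: there is no reason it should act on $X$ by a $p$-cycle on first-level vertices with trivial states below. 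Indeed, an element of prime order $p$ in $K$ need not move any first-level vertex at all (it can be of the form $w*k$ with $\abs{w}\ge 1$), and when it does move one, its states along the orbit are in general non-trivial. So the element $\sigma$ your argument needs is not delivered by this appeal to $(\Z/p)_\infty$.

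The second issue is that the obstacle you are routing around is not actually an obstacle, and recognizing this yields the paper's (much shorter) proof. Take $k\in K$ of prime order $p$ and any vertex $v\in X^*$, at arbitrary depth, whose $\langle k\rangle$-orbit has size $p$; such a $v$ exists since $k\neq 1$ and every orbit has size $1$ or $p$. Conjugation by $k$ sends $v^{k^i}*K^{k^i@v}$ to $v^{k^{i+1}}*K^{k^{i+1}@v}$ by the chain rule for states, and after going around the full cycle the accumulated twist is conjugation by $k^p@v=1@v=1$. Hence $\langle v*K,\,k\rangle$ is a product of $p$ pairwise commuting copies of $K$ supported on disjoint subtrees, cyclically permuted by $\langle k\rangle$ with trivial total monodromy, which is abstractly exactly $K\wr(\Z/p)$; and it lies in $K$ because $v*K\le K$ and $k\in K$. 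It is irrelevant that the individual factors $K^{k^i@v}$ are conjugates of $K$ rather than $K$ itself: each is isomorphic to $K$, and the identification with the untwisted wreath product only requires the round-trip twist to vanish. If you prefer to keep the ``clean $\sigma$'' strategy, you must actually construct such a $\sigma$ inside $K$, which the detour through $(\Z/p)_\infty$ does not accomplish.
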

\begin{proof}
  Let $k\in K$ be an element of finite order. Up to replacing $k$ by a
  power of itself, we may assume $k$ has prime order $p$. In
  particular, there exists a vertex $v\in X^*$ whose orbit under $k$
  has size $p$. Then $\langle v*K,k\rangle\cong K\wr (\Z/p)$ is
  the desired subgroup.
\end{proof}
The following result applies in particular to the Grigorchuk group and the Gupta-Sidki groups:
\begin{corollary} \label{coro-weakly-trosion-PSPACE-CWP}
  Let $G$ be a weakly branched torsion group whose
  branching subgroup is finitely generated.
  \begin{itemize}
\item $\CompWP(G)$ is $\PSPACE$-hard.
\item If $G$ is also contracting, then $\CompWP(G)$ is $\PSPACE$-complete.
\end{itemize}
\end{corollary}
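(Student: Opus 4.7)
The plan is to reduce the first bullet to Theorem~\ref{thm-PSPACE-Z_p} applied to the branching subgroup $K$, and to obtain the matching upper bound for the second bullet by combining Proposition~\ref{prop:contractingL} with Lemma~\ref{lemma-cwp-PSPACE}.

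For the \PSPACE-hardness, I would proceed as follows. First, since $G$ is a torsion group, every element of $K \leq G$ has finite order; in particular $K$ contains a non-trivial element of finite order (as $K$ is non-trivial by the definition of ``weakly branched''). By \cref{lem:wbtorsion=>wreath}, there exists $p \geq 2$ such that $K \wr (\Z/p) \leq K$. Since $K$ is finitely generated by hypothesis, \cref{thm-PSPACE-Z_p} applies to $K$ directly and yields that $\CompWP(K)$ is \PSPACE-hard. The final step is to reduce $\CompWP(K)$ to $\CompWP(G)$: this is the standard observation that for a finitely generated subgroup $H \leq G$, $\CompWP(H)$ is \L-reducible to $\CompWP(G)$. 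Concretely, given an SLP over a standard generating set of $K$, one replaces each terminal by a fixed word over a standard generating set of $G$ (expressing the corresponding $K$-generator as an element of $G$) by adding a constant number of new variables, producing an SLP over $G$ of polynomial size evaluating to the same group element. Composing these reductions yields \PSPACE-hardness of $\CompWP(G)$.

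For the upper bound in the second bullet, I would simply invoke the results already collected in the paper. If $G$ is additionally contracting, \cref{prop:contractingL} gives $\WP(G) \in \L$, and in particular $\WP(G) \in \polyL$. Then \cref{lemma-cwp-PSPACE} yields $\CompWP(G) \in \PSPACE$. Combining with the hardness from the previous paragraph gives \PSPACE-completeness.

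I do not expect any serious obstacle: the nontrivial technical work (the SENS machinery, the leaf-language construction of \cref{thm-main-leaf2}, the wreath-product iteration of \cref{lemma-embedding-b^n} and \cref{thm-PSPACE-Z_p}, and the logspace word-problem algorithm for contracting self-similar groups) has all been carried out earlier. The only non-mechanical observation needed is that torsion forces $K$ to meet the hypothesis of \cref{lem:wbtorsion=>wreath}, which is immediate. The mildest subtlety is the subgroup reduction $\CompWP(K) \leq_\L \CompWP(G)$; this is standard and essentially a terminal substitution on SLPs, so it incurs no complication.
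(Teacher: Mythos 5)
Your proposal is correct and follows exactly the same route as the paper: apply \cref{lem:wbtorsion=>wreath} to get $K \wr (\Z/p) \leq K$, invoke \cref{thm-PSPACE-Z_p} on the finitely generated branching subgroup $K$, lift the hardness from $K$ to $G$ via the standard subgroup reduction, and combine \cref{prop:contractingL} with \cref{lemma-cwp-PSPACE} for the matching upper bound. The paper simply compresses the subgroup reduction into the phrase ``(and hence $G$)'' where you spell it out; the content is identical.
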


\begin{proof}
By Lemma~\ref{lem:wbtorsion=>wreath} the branching subgroup $K$ of $G$
  satisfies the hypotheses of Theorem~\ref{thm-PSPACE-Z_p}, so the
  compressed word problem for $K$ (and hence $G$) is
  $\PSPACE$-hard.

If $G$ is also contracting, then the word problem of $G$ is in $\L$ by
  Proposition~\ref{prop:contractingL}, so
  Lemma~\ref{lemma-cwp-PSPACE} implies that $\CompWP(G)$ belongs to $\PSPACE$.
\end{proof}
Corollary~\ref{coro-weakly-trosion-PSPACE-CWP} gives new (and natural) examples for groups where the compressed
word problem is provably more difficult than the word problem (since $\L$ is a proper subset of $\PSPACE$).
The first example for such a group was provided in \cite{WaeWei19}: it is an automaton group where the word problem
is $\PSPACE$-complete and the compressed word problem is $\EXPSPACE$-complete.
Let us also remark, that the Grigorchuk group is an example of a group where the compressed word problem 
is even more difficult than the power word problem. For the power word problem \cite{LoWe19} the input consists 
of a word $w_1^{z_1} w_2^{z_2}\cdots w_n^{z_n}$, where the exponents $z_i$ are given in binary representation
and the $w_i$ are explicitly given words over the group generators. In terms of complexity, the power word problem lies
between the word problem and the compressed word problem. It is shown in \cite{LoWe19} that the power word
problem for the Grigorchuk group belongs to $\L$, whereas by Corollary~\ref{coro-weakly-trosion-PSPACE-CWP}
the compressed word problem is $\PSPACE$-complete.

\section{Conclusion and open problems}

We have added an algorithmic constraint (uniformly SENS) to the algebraic notion of being a non-solvable group, which implies that the word problem is \Nc1-hard (resp. \ALOGTIME-hard). 
Using this, we produced several new examples of non-solvable groups with an \ALOGTIME-hard word problem.
However, the question remains open whether all non-solvable groups have \ALOGTIME-hard word problem, even if they are not ENS. 
We showed that for every contracting self-similar group the word problem belongs \L. Here, the question remains whether there exists
a contracting self-similar group with a \L-complete word problem.  In particular, is the word problem for the Grigorchuk group
\L-complete? (we proved that it is \ALOGTIME-hard). Also the precise complexity of the word problem for Thompson's group $F$ is
open. It is \ALOGTIME-hard and belongs to \LOGCFL; the latter follows from \cite{LehSchw07}. In fact, from the proof in \cite{LehSchw07} one 
can deduce that the word problem for $F$ belongs to \LOGDCFL (the closure of the deterministic context-free languages with respect to \L-reductions).

\def\cprime{$'$} \def\cprime{$'$}

\end{document}